\documentclass[12pt]{amsart}
\oddsidemargin = 1.2cm \evensidemargin = 1.2cm
\textwidth = 6.1in
\textheight =8.0in

\usepackage{amsmath,amssymb,amsthm}

\def\P{{\mathbb{P}}}

\def\p{{\mathfrak{p}}}

\def\C{\mathbb{C}}
\def\Z{\mathbb{Z}}
\def\Q{\mathbb{Q}}
\def\R{\mathbb{R}}
\def\F{\mathbb{F}}

\def\PGL{{\rm PGL}}

\def\im{{\rm im}\,}

\def\GL{{\rm GL}}

\def\Z{{\mathbb Z}}

\newcommand\Fp{\mathbb{F}_p}

\def\P{{\bf{P}}}

\newtheorem{theorem}{Theorem}[section]

\newtheorem{proposition-definition}[theorem]{Proposition-Definition}

\newtheorem{conjecture}[theorem]{Conjecture}

\theoremstyle{definition}

\newtheorem{question}[theorem]{Question} 

\theoremstyle{remark}

\begin{document}
\title{Galois representations from pre-image trees: an arboreal survey}
\author{Rafe Jones}

\begin{abstract}
Given a global field $K$ and a rational function $\phi \in K(x)$, one may take pre-images of $0$ under successive iterates of $\phi$, and thus obtain an infinite rooted tree $T_\infty$ by assigning edges according to the action of $\phi$. The absolute Galois group of $K$ acts on $T_\infty$ by tree automorphisms, giving a subgroup $G_\infty(\phi)$ of the group $\Aut(T_\infty)$ of all tree automorphisms. Beginning in the 1980s with work of Odoni, and developing especially over the past decade, a significant body of work has emerged on the size and structure of this Galois representation. These inquiries arose in part because knowledge of $G_\infty(\phi)$ allows one to prove density results on the set of primes of $K$ that divide at least one element of a given orbit of $\phi$. 

Following an overview of the history of the subject and  two of its fundamental questions, we survey in Section \ref{generic} cases where $G_\infty(\phi)$ is known to have finite index in $\Aut(T_\infty)$. While it is tempting to conjecture that such behavior should hold in general, we exhibit in Section \ref{exceptional} four classes of rational functions where it does not, illustrating the difficulties in formulating the proper conjecture. Fortunately, one can achieve the aforementioned density results with comparatively little information about $G_\infty(\phi)$, thanks in part to a surprising application of probability theory, as we discuss in Section \ref{density}. Underlying all of this analysis are results on the factorization into irreducibles of the numerators of iterates of $\phi$, which we survey briefly in Section \ref{stability}. We find that for each of these matters, the arithmetic of the forward orbits of the critical points of $\phi$ proves decisive, just as the topology of these orbits is decisive in complex dynamics. 
\end{abstract}

 \dedicatory{Dedicated to the late R. W. K. Odoni, whose inquisitive spirit led him before any others to these beautiful questions.} 

\maketitle

\section{Introduction}

In this survey, we lay out recent work on the action of the absolute Galois group of a global field
on trees of iterated pre-images under rational functions. These actions, also known as \textit{arboreal Galois representations}, have recently seen a surge in interest, largely due to their applications to certain density questions. Their study dates to the foundational work of R.~W.~K.~Odoni \cite{odonigalit, odoniwn, odoni} in the 1980s. Odoni aimed in part to study recurrence sequences satisfying relations of the type $a_n = f(a_{n-1})$, where $a_0 \in \Z$ and $f(x) \in \Z[x]$ is a polynomial of degree at least two. Such a sequence may be described as the orbit of $a_0$ under the dynamical system given by iteration of $f(x)$. 
One might ask whether the sequence $(a_n)_{n \geq 0}$ contains infinitely many primes, but this seems completely out of reach at present. Indeed, the sequence $(a_n)$ grows extremely quickly -- on the order of $d^{d^n}$ -- and a heuristic argument suggests that only finitely many of the $a_n$ are prime. To illustrate the difficulty of this problem, note that taking $a_0 = 3$ and $f(x) = (x-1)^2 + 1$ yields the Fermat numbers, whose prime decompositions have been a mystery since Fermat first speculated about them in 1640. 
A more reasonable hope is to obtain some qualitative information about the prime factorizations of the $a_n$, for instance by considering the whole collection 
$$P_f(a_0) := \{\text{$p$ prime} : \text{$p$ divides at least one non-zero term of $(a_n)_{n \geq 0}$}\}.$$ If this set is sparse within the set of all primes, then at least the $a_n$ do not in the aggregate have too many small prime factors. Another natural question, which we do not discuss in this survey, is whether all but finitely many terms of the sequence $(a_n)$ have a primitive prime divisor (that is, a prime divisor that does not divide any previous terms of the sequence). For a sampling of the large and interesting literature on this question, which merits a survey of its own, see \cite{faber, abc, ingram-silverman, krieger, rice, jhszig}.   

It was Odoni who in \cite{odonigalit, odoniwn} first recognized that if the Galois groups $G_n(f)$ of the iterates $f^n(x)$ of $f(x)$ satisfy certain properties, then $P_f(a_0)$ has natural density zero in the set of all primes (see p.~\pageref{densedef} for a definition of natural density). Indeed, the density of the complement of $P_f(a_0)$ is bounded below by the density of $p$ such that $f^n(x) \equiv 0 \bmod{p}$ has no solution (see p.~\pageref{bound} for more on this). \label{frobdisc} The latter condition is equivalent to Frobenius at $p$ acting without fixed points on the roots of $f^n(x)$. One then gets from the Chebotarev density theorem (in fact, the Frobenius density theorem suffices \cite[Section 3]{stevenhagen}) that $P_f(a_0)$ has density zero if 
\begin{equation} \label{thekey0}
\lim_{n \to \infty} \frac{\# \{g \in G_n(f) : \text{$g$ fixes at least one root of $f^n(x)$} \}}{\#G_n(f)} = 0.
\end{equation}


Odoni exploits this observation in \cite{odoniwn}, where he considers \textit{Sylvester's sequence}\footnote{Named for J. J. Sylvester, and known widely for its connections to Egyptian fractions.}, defined by
$$w_1 = 2, \qquad \text{$w_n = 1 + w_1w_2 \cdots w_{n-1}$ \; for $n \geq 2$}.$$
One readily checks that $w_n = w_{n-1}^2 - w_{n-1} + 1$, and so Sylvester's sequence is the orbit of $2$ under iteration of $f(x) = x^2 - x + 1$. Odoni proves the highly non-trivial result that $P_f(2)$ has density zero in the set of all primes by establishing isomorphisms
\begin{equation} \label{odonisum}
G_n(f) \cong \Aut(T_n) \qquad \text{for all $n \geq 1$},
\end{equation}
where $G_n(f)$ is the Galois group of the $n$th iterate of $f(x) = x^2 - x + 1$ and $\Aut(T_n)$ is the group of tree automorphisms of the complete binary rooted tree of height $n$. The tree in question has as its vertex set the disjoint union $\{0\} \sqcup f^{-1}(0) \sqcup f^{-2}(0) \sqcup \cdots \sqcup f^{-n}(0)$ of iterated preimages of $0$ under $f(x)$, and two vertices are joined by an edge when $f$ sends one vertex to the other. That $G_n(f)$ injects into $\Aut(T_n)$ follows from basic Galois theory; to prove surjectivity requires the art. With the explicit description of $G_n(f)$ given in \eqref{odonisum}, Odoni goes on to establish \eqref{thekey0} by a direct calculation \cite[p. 5]{odoniwn}, a result which has a nice restatement in terms of branching processes \cite[Proposition 5.5]{galmart}. It is worth pointing out that isomorphisms such as those in \eqref{odonisum} do not hold for $f(x) = (x-1)^2 + 1$; in this case $G_n(f)$ may be shown to be abelian, and the corresponding zero-density result follows easily \cite[p. 11]{odoniwn}.  

Odoni did not use the language of tree automorphisms, preferring to think of $\Aut(T_n)$ as the $n$-fold iterated wreath product of $\Z/2\Z$ (or more generally of $S_d$ when the tree is $d$-ary for $d \geq 2$). For us, considering elements of $G_n(f)$ as tree automorphisms has the advantage of providing an object on which Galois acts, thus allowing a more direct analogy with Galois representations associated to abelian varieties. We note that another dynamical Galois representation comes from the natural Galois action on the set of periodic points of $\phi$. We do not treat this interesting topic in the present article, but see \cite{morton}, \cite{pmp}, and \cite[Section 3.9]{jhsdynam}.

\subsection{Definitions and main questions}
To more closely match the Tate module from the theory of abelian varieties, we wish to attach an infinite pre-image tree to any rational function $\phi \in K(x)$ of degree $d \geq 2$ and any point $\alpha \in \mathbb{P}^1(K)$, where $K$ denotes a global field with separable closure $K^{\text{sep}}$. Denote by $\phi^n(x)$ the $n$th iterate of $\phi$, that is, the $n$-fold composition of $\phi$ with itself. We must be careful to consider \textit{only $\alpha$ for which the equation $\phi^{n}(x) = \alpha$ has $d^n$  distinct solutions, for each $n \geq 1$}. This ensures that we obtain a complete infinite rooted $d$-ary tree $T_\infty(\alpha)$ whose set of vertices is 
\begin{equation} \label{tdef}
\bigsqcup_{n \geq 0} \phi^{-n}(\alpha) \subseteq \mathbb{P}^1(K^{\text{sep}})
\end{equation}
and whose edges are given by the action of $\phi$ (we take $\phi^0(\alpha) = \{\alpha\}$ in \eqref{tdef}, and note that $\alpha$ is the root of the tree). The absolute Galois group $\Gal(K^{\text{sep}}/K)$ acts on $T_\infty(\alpha)$, and moreover preserves the connectivity relation in $T_\infty(\alpha)$, as Galois elements commute with $\phi$ since the latter is defined over $K$. Hence we obtain a homomorphism
\begin{equation*} \label{rhodef}
\rho : \Gal(K^{\text{sep}}/K) \to \Aut(T_\infty(\alpha)).
\end{equation*} 
The image of $\rho$ is the primary object of study in this article, and we write
\begin{equation*}
G_\infty(\phi, \alpha) := \im \rho.
\end{equation*}
More concretely, $G_\infty(\phi, \alpha)$ is the inverse limit of the Galois groups 
\begin{equation*}
G_n(\phi, \alpha) := \Gal(K(\phi^{-n}(\alpha))/K), 
\end{equation*}
which form an inverse system under the natural surjections $G_{n+1}(\phi, \alpha) \to G_n(\phi, \alpha)$ that arise from the inclusions $K(\phi^{-n}(\alpha)) \subseteq K(\phi^{-(n+1)}(\alpha))$. If $h$ is a M\"obius transformation defined over $K$ and $\psi := h^{-1} \circ \phi \circ h$, then a simple calculation shows that $K(\phi^{-n}(\alpha)) = K(\psi^{-n}(h^{-1}(\alpha)))$ for each $n \geq 1$. Taking $h$ to be translation by $\alpha$, we see that to determine $G_\infty(\phi, \alpha)$, we need only determine $G_\infty(\psi, 0)$, and hence to obtain complete knowledge in the general situation it is enough to understand the case where $\alpha = 0$. In the sequel, we thus drop any reference to $\alpha$ and write 
\begin{equation*}
\framebox{$T_\infty$ for $T_\infty(0)$, \qquad $G_\infty(\phi)$ for $G_\infty(\phi,0),$ \qquad
$G_n(\phi)$ for $G_n(\phi,0)$.}
\end{equation*}
In light of the definition of $\rho$, we have natural injections 
$$\text{$G_\infty(\phi) \hookrightarrow \Aut(T_\infty)$} \qquad \text{and} \qquad \text{$G_n(\phi) \hookrightarrow \Aut(T_n)$},$$ where the vertex set of $T_n$ is $\bigsqcup_{i = 0}^n \phi^{-i}(0)$, and edges are assigned according to the action of $\phi$.  
We emphasize that throughout this article, 
$$\text{we assume that for each $n \geq 1$, $\phi^n(x) = 0$ has $d^n$ distinct solutions in $K^{\text{sep}}$.}$$
This assumption is a mild one, and can be easily checked for a given $\phi$.  
With these conventions in place, we pose our first main question: 
\begin{question} \label{imagequestion} 
Let $K$ be a global field. 
\begin{enumerate}
\item[(a)] For which rational functions $\phi \in K(x)$ do we have $[\Aut(T_\infty) : G_\infty(\phi)] < \infty$? 
\item[(b)] For which $\phi$ do we have $G_\infty(\phi) = \Aut(T_\infty)$? 
\end{enumerate}
\end{question}
We remark that the finite index question is perhaps more robust, since a positive answer implies a positive answer when $K$ is replaced by any finite extension. 
In the well-studied case of $\ell$-adic Galois representations arising from elliptic curves, J.-P. Serre settled the analogue to Question \ref{imagequestion}(a) with his celebrated open image theorem \cite{serre1}. Let $E$ be an elliptic curve without complex multiplication and defined over a number field $K$, $\ell$ a rational prime, and $G_\infty$ the inverse limit of the Galois groups of the extensions $K(E[\ell^n])/K$. Because of the group structure on $E$, one has a natural injection $G_\infty \hookrightarrow \GL(2, \Z_\ell)$. Serre showed that 
\begin{equation} \label{serre}
[\GL(2, \Z_\ell) : G_\infty] < \infty, 
\end{equation}
with the index being $1$ for all but finitely many $\ell$. The proof of Serre's theorem relies on the relative paucity of subgroups of $\GL(2, \Z_\ell)$. In our dynamical setting, on the other hand, one finds that $\Aut(T_\infty)$ has a discouraging abundance of subgroups; for instance when $d = 2$, every countably based pro-2 group is a subgroup of $\Aut(T_\infty)$, and matters are at least as bad for larger $d$. Nonetheless, some techniques are available for showing that $G_\infty(\phi)$ must be a large subgroup of $\Aut(T_\infty)$ in certain cases, and we survey them and the results they provide in Section \ref{generic}. In addition, we provide some evidence supporting the idea that Question \ref{imagequestion}(a) has an affirmative answer in general.

Question \ref{imagequestion}(a) does not have a positive answer for all $\phi$, just as Serre's theorem does not hold when $E$ has complex multiplication, but in attempting to make a precise conjecture one encounters serious obstacles in locating the cases that must be excluded.  In Section \ref{exceptional}, we discuss in some detail four types of these exceptional maps, including those that are post-critically finite (see p. \pageref{pcfref} for a definition) and those that commute with a non-trivial M\"obius transformation. In the case where $\phi$ is quadratic, enough results and examples have now been accumulated that we conjecture these four types constitute the only exceptions (see Conjecture \ref{quadratic}). 

To prove zero-density theorems for primes dividing a given orbit of $\phi$, one does not need information as strong as $[\Aut(T_\infty) : G_\infty(\phi)] < \infty$. This raises our second primary question:
\begin{question} \label{densityquestion} 
Let $K$ be a global field. For which maps $\phi \in K(x)$ can we deduce enough about $G_\infty(\phi)$ to ensure that the limiting proportion of fixed points given in \eqref{thekey0} is zero, and hence
all orbits of $\phi$ have density zero prime divisors?
\end{question}
In Section \ref{density}, we survey results showing that in some cases minimal information about $G_\infty(\phi)$ suffices. These results proceed via a possibly unexpected use of the theory of stochastic processes, and they lead to a variety of zero-density theorems (see Theorem \ref{sumup} for an example). 
Many of the results in Sections \ref{generic} and \ref{density} rely on being able to establish that the numerators of $\phi^n$ are irreducible for all $n \geq 1$. Results in this direction, which are of interest in their own right, are surveyed in Section \ref{stability}. 




\section{The image of $\rho$: generic case} \label{generic}

\subsection{A tour of known results} \label{tour}
Let $\rho, G_\infty(\phi),$ and $G_n(\phi)$ be defined as on p. \pageref{rhodef}. Any discussion of the generic situation must begin with work of Odoni, who in \cite{odonigalit} studied the case where $K$ is a field of characteristic zero, $t_0, \ldots, t_{d-1}$ are algebraically independent over $K$, and 
\begin{equation} \label{genmon}
F(x) = x^d + t_{d-1}x^{d-1} + \cdots + t_1x + t_0.
\end{equation}
Let $T_\infty$ be defined as in \eqref{tdef} with $\phi = F$; now it resides in the algebraic closure of $K(t_0, \ldots, t_{d-1})$. Odoni shows \cite[Theorem I]{odonigalit}: 
\begin{theorem}[\cite{odonigalit}] \label{odonigen}
With notation as above, $G_\infty(F) = \Aut(T_\infty)$.
\end{theorem}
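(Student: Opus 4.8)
The plan is to prove the statement by induction on $n$, showing that $G_n(F) = \Aut(T_n)$ for all $n \geq 1$; the full result then follows by passing to the inverse limit. The base case $n=1$ amounts to the classical fact that the generic monic polynomial $F(x) = x^d + t_{d-1}x^{d-1} + \cdots + t_0$ has Galois group $S_d = \Aut(T_1)$ over $K(t_0, \ldots, t_{d-1})$, which holds because the $t_i$ are algebraically independent (one can, for instance, specialize to realize $S_d$, or invoke the standard generic-polynomial argument).

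For the inductive step, suppose $G_n(F) = \Aut(T_n)$. The tree $T_{n+1}$ is obtained from $T_n$ by attaching a $d$-element fiber $F^{-1}(\beta)$ above each leaf $\beta \in F^{-n}(0)$, so $\Aut(T_{n+1})$ is the wreath product $\Aut(T_n) \ltimes S_d^{d^n}$, sitting in an exact sequence $1 \to S_d^{d^n} \to \Aut(T_{n+1}) \to \Aut(T_n) \to 1$. Since $G_{n+1}(F)$ surjects onto $G_n(F) = \Aut(T_n)$, it suffices to show that $G_{n+1}(F)$ contains the full base group $S_d^{d^n}$, i.e.\ that the extension $K_{n+1} := K(t_0,\ldots,t_{d-1})(F^{-(n+1)}(0))$ has degree $(d!)^{d^n}$ over $K_n := K(t_0,\ldots,t_{d-1})(F^{-n}(0))$. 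Equivalently, writing $\beta_1, \ldots, \beta_{d^n}$ for the leaves at level $n$, one must show the fields $K_n(F^{-1}(\beta_i))$ are "as independent as possible" over $K_n$: each has degree $d!$ (i.e.\ $F(x) - \beta_i$ has Galois group $S_d$ over $K_n$), and the compositum of all of them has degree $(d!)^{d^n}$.

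The key tool is a discriminant/ramification argument. One shows that the extensions $K_n(F^{-1}(\beta_i))/K_n$ are pairwise linearly disjoint by exhibiting, for each $i$, a prime (a place of $K_n$, viewing these as function fields over $K$) that ramifies in $K_n(F^{-1}(\beta_i))$ but not in the compositum of the others; this forces the relevant $S_d$-extensions to intersect trivially in pairs, and then a cohomological argument (the base group $S_d^{d^n}$ has no proper $\Aut(T_n)$-stable subgroup projecting onto each factor once one rules out the "diagonal" subgroups via pairwise disjointness, using that $S_d$ is generated by transpositions and that inertia generators are transpositions) upgrades pairwise disjointness to full independence. Concretely, the discriminant of $F(x) - \beta_i$ over $K_n$ is a polynomial expression in the $t_j$ and $\beta_i$ whose relevant irreducible factors distinguish the different $i$, because the $\beta_i$ are distinct and the setup is "generic"; Odoni's argument makes this precise by tracking how the discriminants of the level-$(n+1)$ factors sit relative to the field $K_n$.

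The main obstacle is exactly this last step: proving the full independence (not just pairwise) of the $d^n$ quadratic-analogue extensions at each level, which requires controlling the discriminants of $F(x) - \beta_i$ as elements of the rather complicated field $K_n$ and showing they are multiplicatively independent modulo squares (for $d=2$) or, in general, that the corresponding inertia data generates all of $S_d^{d^n}$. Handling the ramification bookkeeping over the tower $K_n$, where the $\beta_i$ are themselves algebraic functions of the $t_j$ of large degree, is the technical heart of the proof; the algebraic independence of the $t_i$ is what ultimately guarantees enough "room" for the needed ramified primes to exist and be distinct.
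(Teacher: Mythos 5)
The paper does not reproduce Odoni's proof of this theorem -- it is cited as \cite[Theorem I]{odonigalit} without argument -- so I can only assess your proposal on its own terms, together with what the paper says in Section~\ref{methods} about the general method.

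There is a genuine gap at the step you flag as ``upgrading pairwise disjointness to full independence.'' Pairwise linear disjointness of the extensions $K_n(F^{-1}(\beta_i))/K_n$ does \emph{not} imply that their compositum has degree $(d!)^{d^n}$, and the parenthetical group-theoretic claim you offer to bridge this is false as stated. Already for $d=2$: take three quadratic extensions $K_n(\sqrt{a})$, $K_n(\sqrt{b})$, $K_n(\sqrt{ab})$ -- pairwise disjoint, yet the compositum has degree $4$, not $8$. For general $d$, the subgroup $H = \{(g_1,\dots,g_m) \in S_d^m : \prod_i \mathrm{sgn}(g_i) = 1\}$ is a proper, $\Aut(T_n)$-stable subgroup of the base group that projects onto every factor and onto every \emph{pair} of factors, so it is not ruled out by pairwise disjointness or by any statement that ``inertia is generated by transpositions.'' This sign-sum obstruction is precisely the kind of relation one must work to exclude; in the $d=2$ setting described in Section~\ref{methods}, the whole argument is organized around showing that if the $\mathbb{F}_2[G_{n-1}]$-module $V$ of relations is nonzero, then it must contain the all-ones vector $(1,\dots,1)$ (using that $G_{n-1}$ is a $2$-group acting transitively), and then ruling out that single relation by showing $f^n(c)$ is a nonsquare. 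That is, one shows there is essentially \emph{one} potential obstruction and then kills it directly; one does not deduce full independence from pairwise independence plus inertia-generation facts. Your proposal as written does not supply an argument against these higher-order relations.

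Beyond this specific gap, the ramification/discriminant strategy you sketch is the right toolkit for \emph{specific} maps over a global field -- this is exactly the method of Theorem~\ref{criterion} and the results of Stoll and others, and it is what the paper's Section~\ref{methods} describes -- but Odoni's argument for the \emph{generic} polynomial is generally understood to lean more heavily on the algebraic-independence structure available over $K(t_0,\dots,t_{d-1})$ (e.g.\ the level-one roots $\beta_1,\dots,\beta_d$ are algebraically independent over $K$, giving genuine transcendence-degree room at each level, together with stability via an Eisenstein-type argument as in his Lemma~2.2). If you want to pursue a ramification-style proof of the generic theorem you would need to make the place-counting over the function field $K_n$ completely precise, and in particular address the higher-order relations above; at present that step is asserted rather than proved.
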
Á
In the case where $K$ is a number field, one may then fix $n$ and apply Hilbert's irreducibility theorem to deduce that $G_n(f) = \Aut(T_n)$ for all but a ``thin set" $E_n$ of degree-$d$ polynomials $f$ defined over $K$. Unfortunately, $E_n$ is not effectively computible, and moreover one cannot rule out that the union of the $E_n$ includes all degree-$d$ polynomials defined over $K$. Indeed, Odoni makes the following tentative conjecture, which is a special case of \cite[Conjecture 7.5]{odonigalit}:
\begin{conjecture}[Odoni]
For each $d \geq 2$, there exists a monic polynomial $f(x) \in \Z[x]$ of degree $d$ with $G_\infty(f) = \Aut(T_\infty)$. 
\end{conjecture}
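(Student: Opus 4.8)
The plan is to prove Odoni's conjecture by exhibiting, for each $d \geq 2$, an \emph{explicit} monic $f \in \Z[x]$ of degree $d$ and verifying $G_\infty(f) = \Aut(T_\infty)$ one level at a time; Theorem~\ref{odonigen} will be used only as a guide to which geometric features of $f$ to impose and, at the end, to rule out accidental relations. Since $\Aut(T_\infty) = \invlim \Aut(T_n)$, it suffices to show $G_n(f) = \Aut(T_n)$ for all $n$, which I would prove by induction on $n$. Write $M_n := \ker(\Aut(T_n) \twoheadrightarrow \Aut(T_{n-1})) \cong \prod_v \Sym(f^{-1}(v))$, the product over the $d^{n-1}$ leaves $v$ of $T_{n-1}$, on which $\Aut(T_{n-1})$ acts by permuting the factors leaf-transitively. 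Granting $G_{n-1}(f) = \Aut(T_{n-1})$, the group $G_n(f)$ already surjects onto $\Aut(T_{n-1})$, so $G_n(f) = \Aut(T_n)$ as soon as $G_n(f) \supseteq M_n$; and it is enough that $G_n(f)$ contain the ``local symmetric subgroup'' $K_{v_0} := \{\mu \in M_n : \mathrm{supp}(\mu) \subseteq \{v_0\}\} \cong S_d$ above a single leaf $v_0$, since conjugating $K_{v_0}$ by lifts into $G_n(f)$ of the leaf-transitive elements of $\Aut(T_{n-1}) = G_{n-1}(f)$ yields $K_v$ for every $v$, and the $K_v$ generate $M_n$.

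To place such a $K_{v_0}$ inside $G_n(f)$ I would use inertia, following the principle pervasive in this survey that the primes controlling level $n$ are exactly those dividing the integers $f^m(c)$ for $c$ a critical point of $f$ and $1 \leq m \leq n$ --- up to constants, the critical values of $f^n$, hence the divisors of its discriminant. For $d = 2$ this is classical: $K_{v_0}$ is a single transposition, $M_n$ is the $\F_2$-permutation module on the leaves of $T_{n-1}$, and one new double root of $f^n$ modulo a prime $p_n$ already propagates to all of $M_n$. For $d \geq 3$ a single transposition generates only one copy of $\Z/2\Z$ per factor, so I would arrange \emph{two} new primes at level $n$: a prime $p_n$ at which $f^n$ acquires exactly one double root, lying in the fiber $f^{-1}(v_0)$, whose tame inertia gives a transposition in $K_{v_0}$; and a prime $q_n \nmid d$ at which $f$ reduces to a unicritical polynomial whose critical point has forward orbit hitting the coordinate of $v_0$ at step $n$, so that the fiber $f^{-1}(v_0)$ becomes totally, tamely ramified at $q_n$, with cyclic inertia acting on it as a $d$-cycle in $K_{v_0}$ (while the other fibers stay separable mod $q_n$, hence unramified). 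A transposition and a $d$-cycle generate $K_{v_0} \cong S_d$, closing the inductive step.

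It then remains to produce one $f \in \Z[x]$ meeting all of these conditions at once. I would work in a $(d-1)$-parameter family of monic degree-$d$ polynomials with distinct rational critical points --- so that $f$ is not unicritical, the tree of the specialization is well defined, and $G_1(f) = S_d$ can be forced by ordinary congruence conditions --- plus one extra integer parameter reserved for a greedy construction. Having handled levels $1,\dots,n-1$ with primes $p_1,q_1,\dots,p_{n-1},q_{n-1}$, the level-$n$ requirements at $p_n$ and $q_n$ amount to congruences on $f$ modulo small powers of $p_n$ and $q_n$; choosing these primes large makes the new conditions coprime to, hence non-interfering with, all earlier ones and with each other, so the successive constraints stabilize $f$ modulo every fixed integer and a diagonal argument produces an honest $f \in \Z[x]$. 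Along the way one checks that no coincidence among the forward critical orbits reintroduces a relation keeping $G_n(f)$ proper; this genericity check is where Theorem~\ref{odonigen}, applied after specializing all but one parameter of the family, certifies that the remaining degree of freedom behaves as expected.

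The principal obstacle is the interplay between the group theory of $\Aut(T_n)$ for $d \geq 3$ and the insistence on a single integer polynomial. When $d = 2$ everything linearizes and one is really tracking an $\F_2$-vector space of quadratic subextensions built from square classes of critical-orbit data; but for $d \geq 3$ the kernel $M_n$ is a product of \emph{non-abelian} groups as an $\Aut(T_{n-1})$-module, and since local Galois groups are solvable no single prime's inertia can furnish a full $S_d$ above one vertex --- hence the two-prime bookkeeping, whose compatibility with the global construction is the delicate point. The deeper, conceptually central difficulty is precisely the gap between Theorem~\ref{odonigen} together with Hilbert irreducibility --- which yields surjectivity only for ``most'' $f$, with no control over whether the excluded thin sets exhaust $\Z[x]$ --- and the conjecture itself: bridging it demands the explicit prime-by-prime specialization above, and the technical heart is verifying that infinitely many prescribed ramification events can all be realized by one polynomial without ever enlarging the set of bad primes in a way that resurrects a relation at some later level.
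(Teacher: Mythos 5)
The statement you are attempting to prove is precisely Odoni's conjecture, which the paper does \emph{not} prove: it explicitly records that ``this conjecture remains open for all $d \geq 3$.'' (The case $d=2$ is the Odoni--Stoll result, Theorem~\ref{stollthm}, obtained by the genuinely different route of the square-criterion \eqref{onecrit} together with rigid divisibility of the critical orbit, not by Hilbert irreducibility or by the two-prime inertia bookkeeping you describe.) So there is no ``paper's own proof'' to compare against; what needs assessing is whether your sketch closes the gap the paper identifies, and it does not.

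The step that would fail is the ``diagonal argument'' at the end. You propose to handle level $n$ by imposing congruences on the coefficients of $f$ modulo powers of fresh large primes $p_n, q_n$, for every $n$, and then to pass to a limit. But the coefficients of $f$ are finitely many integers, and imposing infinitely many pairwise-coprime congruence constraints on them does not stabilize; the natural limit lives in $\widehat{\Z}[x]$, not $\Z[x]$. ``Stabilizing $f$ modulo every fixed integer'' is not the same as stabilizing $f$ in $\Z[x]$: at each stage you must perturb $f$ by a multiple of $p_1q_1\cdots p_{n-1}q_{n-1}$ to meet the new conditions, so the coefficients march off to infinity rather than converge. This is exactly the obstruction the paper flags when it says one ``cannot rule out that the union of the $E_n$ includes all degree-$d$ polynomials defined over $K$'': the thin exceptional sets from Hilbert irreducibility at each level might conspire, and a greedy congruence construction cannot escape that without some new idea. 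There is also a group-theoretic slip earlier: a transposition together with a $d$-cycle inside $K_{v_0}\cong S_d$ need \emph{not} generate $S_d$ when $d$ is composite (e.g.\ for $d=4$, $(1\,3)$ and $(1\,2\,3\,4)$ generate only the dihedral group of order $8$); one must arrange the transposition to move adjacent points of the cycle, which is an extra condition on the two ramification events that your sketch does not control. Your diagnosis of the core difficulty --- that local inertia is solvable, so no single prime yields $S_d$ above a vertex for $d\geq 5$, and that this forces a multi-prime, level-by-level argument whose global realizability is the crux --- is accurate and is essentially why the problem was hard; but the realizability is precisely what is missing here.
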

This conjecture remains open for all $d \geq 3$. While Theorem \ref{odonigen} does not answer Question \ref{imagequestion} (a) or (b) for any single polynomial, it does offer evidence that in the absence of some sort of arithmetic coincidence one expects to find $G_\infty(f) = \Aut(T_\infty)$. 

Quadratic polynomials, as they do often in questions related to dynamics, furnished the first realm where it became possible to answer Question \ref{imagequestion} in certain cases, though much remains unknown. As noted in the introduction, Odoni showed in \cite{odoniwn} that $G_\infty(f) = \Aut(T_\infty)$ in the case $K = \Q$, $f(x) = x^2 - x + 1$.  Moreover, in \cite[Section 4]{odoni}, he responded to a question of J. McKay by giving a powerful algorithm for deciding whether $G_n(f) = \Aut(T_n)$ for $f(x) = x^2 + 1$. J. Cremona \cite{cremona} used this algorithm\footnote{According to Cremona, his ability to push the calculation so far relied in part on a computer bug. While running his program on a powerful computer cluster at the University of Bath, a Friday night glitch effectively killed all the processes but his, allowing his program to hog the machine all weekend.} to verify the assertion for $n$ up to $5 \cdot 10^7$. Note that for $n = 5 \cdot 10^7$,  
$$\log_2 |\Aut(T_n)| = 32^{10000000} - 1,$$ showing that Odoni's method goes far beyond what brute force computation could allow. M. Stoll \cite{stoll} then furnished a clever trick to show that Odoni's algorithm works for all $n$, and generalized the result to many other cases:
\begin{theorem}[\cite{stoll}] \label{stollthm} Let $K = \Q$ and $f(x) = x^2 + k \in \Z[x]$, where $-k$ is not a square, and one of the following holds:
\begin{itemize}
\item $k > 0, k \equiv 1 \bmod{4}$
\item $k > 0, k \equiv 2 \bmod{4}$
\item $k < 0, k \equiv 0 \bmod{4}$
\end{itemize}
Then $G_\infty(f) = \Aut(T_\infty)$.
\end{theorem}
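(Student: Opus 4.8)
\emph{Sketch of approach.} As in every result of this kind since Odoni, the whole problem is governed by the forward orbit of the unique finite critical point $0$ of $f$, i.e. by the integer sequence $c_0=0$, $c_1=k$, $c_{n+1}=c_n^2+k=f^{n+1}(0)$. Write $L_n=\Q(f^{-n}(0))$ for the $n$-th field of the tower. Since $\Aut(T_n)\cong\Aut(T_{n-1})\ltimes(\Z/2\Z)^{2^{n-1}}$ and the $f$-preimages of any $\beta$ are $\pm\sqrt{\beta-k}$, the step $L_n/L_{n-1}$ is obtained by adjoining the square roots of the $2^{n-1}$ numbers $\beta-k$ as $\beta$ runs over the roots of $f^{n-1}$; their product is $\prod_\beta(\beta-k)=f^{n-1}(k)=c_n$ for $n\ge 2$, and $-k$ for $n=1$. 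A recursive lemma --- Odoni's in spirit, given its final form by Stoll --- then states that, \emph{assuming} $G_{n-1}(f)=\Aut(T_{n-1})$, one has $G_n(f)=\Aut(T_n)$ if and only if $c_n$ is not a square in $L_{n-1}$; the collapse of the ``all nonempty subsets'' condition to this single one uses the transitivity of $\Aut(T_{n-1})$ on the $2^{n-1}$ leaves together with the submodule structure of the leaf permutation module. The first task is to assemble this reduction.

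Second, one rewrites ``$c_n\notin(L_{n-1}^\times)^2$'' as a condition over $\Q$. Under the same inductive hypothesis $L_{n-1}$ has exactly $n-1$ independent quadratic subfields, a basis for which (modulo squares) is supplied by the discriminants of $f,\dots,f^{n-1}$; a routine resultant computation gives $\mathrm{disc}(f^i)\equiv\pm 2^{e_i}c_i\pmod{(\Q^\times)^2}$ with $e_i\in\{0,1\}$. Hence $\sqrt{c_n}\in L_{n-1}$ iff $c_n$ lies in the subgroup of $\Q^\times/(\Q^\times)^2$ generated by those $\pm 2^{e_i}c_i$, and $G_\infty(f)=\Aut(T_\infty)$ becomes the statement that the classes of $-k,c_2,c_3,\dots$ are $\F_2$-linearly independent in $\Q^\times/(\Q^\times)^2$, modulo a bounded amount of bookkeeping with the symbols $-1$ and $2$ --- and it is precisely this bookkeeping that the three congruence/sign conditions on $k$ are there to control. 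One really must dispose of all $n$ at once: $\log_2|\Aut(T_n)|$ grows doubly exponentially, so the finite verifications of Odoni's algorithm and Cremona's computation can never suffice; that is the point of Stoll's contribution.

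Third, one proves the $\F_2$-independence. A clean sufficient condition is that for each $n\ge 2$ the integer $c_n$ possesses a \emph{primitive} prime divisor --- one dividing none of $2,k,c_2,\dots,c_{n-1}$ --- occurring in $c_n$ to an \emph{odd} power (this together with the hypothesis that $-k$ is not a square). Indeed $c_{m+j}\equiv c_j\pmod{c_m}$ forces $\gcd(c_m,c_n)\mid c_{\gcd(m,n)}$, so such a prime $p$ has $v_p(c_n)$ odd while $v_p$ is even on every element of the subgroup generated by $-1,2,c_1,\dots,c_{n-1}$; thus $c_n$ escapes that subgroup, which is exactly what the previous step requires. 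Running this over all $n$ yields the full independence, and hence $G_\infty(f)=\Aut(T_\infty)$.

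\textbf{The main obstacle.} Producing these primitive, odd-multiplicity prime divisors uniformly in $n$ is the crux, and it fails for some $k$ (post-critically finite values among them), which is why the theorem is not unconditional. Stoll's device is to push three elementary invariants through $c_n=c_{n-1}^2+k$: (i) the $2$-adic valuation, which in each of the three cases stabilises to a value (or eventual pattern) governed by $k\bmod 4$, pinning down the contribution of the prime $2$; (ii) the real place, which --- because the family contains $-k$ --- kills the exponent of $-k$ in any putative relation, and when $k<0$ isolates $c_1$ from the eventually-positive tail; and (iii) coprimality via $\gcd(c_m,c_n)\mid c_{\gcd(m,n)}$, which makes a newly appearing prime automatically primitive and, together with the doubly-exponential growth of $|c_n|$, forces a genuinely new prime to appear (a Zsigmondy-type input), after a handful of small $n$ are checked directly. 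Making (i)--(iii) interlock in each of the three cases --- above all, fixing the eventual $2$-adic valuation and clearing the small-$n$ exceptions --- is where I expect nearly all of the effort to lie; everything upstream is formal by comparison.
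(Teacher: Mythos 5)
Your steps 1--3 are essentially the paper's method: the Kummer-theoretic reduction (with the relation module $V$ forced to contain $(1,\dots,1)$ by transitivity and the $2$-group property), the identification $\prod_\beta(\beta-k)=f^n(0)$ up to the factor $(-4)^{2^{n-1}}$, the determination of the rational square classes that become squares in $L_{n-1}$ via the discriminants $\Disc(f^i)\equiv\pm f^i(0)$, and the criterion that a primitive prime divisor of $c_n$ of odd multiplicity suffices (this is Theorem \ref{criterion}, with the $\gcd$ relation you cite being the ``rigid divisibility'' of the critical orbit). All of that is correct and is exactly the scaffolding the survey describes.

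The gap is in your final step, which is where the theorem actually lives. You propose to produce the required prime by arguing that ``the doubly-exponential growth of $|c_n|$ forces a genuinely new prime to appear (a Zsigmondy-type input),'' with coprimality making it automatically primitive. This fails twice over. First, no elementary growth argument yields a primitive prime divisor of $c_n$ for \emph{every} $n$; unconditionally one only gets such primes for infinitely many $n$, and obtaining them for all large $n$ \emph{with odd multiplicity} is precisely what requires the ABC conjecture in general (Theorem \ref{abc}) --- if your mechanism worked it would prove far more than Stoll's theorem. Second, even granted a new prime, it may enter to even multiplicity and then contributes nothing: for $k=3$ one has $f^2(0)=2^2\cdot 3$ and $f^3(0)=7^2\cdot 3$, so the primitive part is a perfect square and $[\Aut(T_3):G_3(f)]=2$ despite $f^3$ being irreducible. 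The actual argument runs in the opposite direction: one forms the primitive part $b_n=\prod_{d\mid n}a_d^{\mu(n/d)}$ (pairwise coprime by rigid divisibility) and shows \emph{directly}, using only the sign of $b_n$ and its class in $\Q_2^\times/(\Q_2^\times)^2$ --- this is exactly what the three sign/congruence conditions on $k$ control --- that $b_n$ is not a unit times a square; unique factorization then \emph{produces} the odd-multiplicity primitive odd prime, rather than requiring one to be exhibited in advance. That local computation on $b_n$ is Stoll's trick and is the one piece of the proof your proposal does not supply.
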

Interestingly, there is no way to extend Stoll's method to all other cases where $-k$ is not a square. For instance, when $k = 3$, one finds that $[\Aut(T_3) : G_3(f)] = 2$, even though the third iterate of $f(x) = x^2 + 3$ is irreducible. As we will see shortly, this arises from the curious fact that both $f^2(0)$ and $f^3(0)$ have large square factors. W. Hindes \cite{hindes} has recently shown that $k=3$ is the only integer to exhibit this particular degeneracy, thereby answering a question of the author. In \cite{hindes2}, Hindes conjectures that $[\Aut(T_{\infty}): G_\infty(f)]=2$, using Ê
an updated form of Hall's conjecture (involving the size of the integral points on the Mordell curves $y^2=x^3+A$). However, at present it is not known whether $[\Aut(T_{\infty}): G_\infty(f)]$ is even finite. \label{3case}

In addition to these polynomial cases, there is at present just one more rational function $\phi \in \Q(x)$ for which it is known that $G_\infty(f) = \Aut(T_\infty)$ for $K = \Q$, namely 
\begin{equation} \label{special}
\phi(x) = \frac{1+3x^2}{1-4x-x^2}.
\end{equation}
See \cite[Theorem 1.2]{galrat}. This particular function has a critical point at $x = 1$ that lies in a two-cycle, similar to a polynomial's fixed critical point at infinity. We shall have more to say at the end of Section \ref{methods} about the additional fortuitous properties of $\phi$ that allow for this result. In \cite[Theorem 3.2]{thesis}, it is shown using a minor variation of Stoll's technique that we have $G_\infty(f) = \Aut(T_\infty)$ for $f(x) = x^2 + t$, provided that $K$ has characteristic $p \equiv 3 \bmod{4}$. It would be interesting to know if the same results holds when $K$ has arbitrary odd characteristic.


In a handful of additional cases it is known that $[\Aut(T_\infty) : G_\infty(f)] < \infty$. The next result follows from work in \cite{quaddiv}; see the remark following the proof of Theorem 1.1 of \cite{quaddiv}. We recall some definitions:
\begin{itemize}
\item A rational map is \textit{post-critically finite} \label{pcfref} if the forward orbit of each of its critical points is finite (see Section \ref{pcf} for more about such maps).
\item A point $\alpha$ is \textit{periodic} under a rational map $\phi$ if $\phi^n(\alpha) = \alpha$ for some $n \geq 1$.
\item A point $\alpha$ is \textit{pre-periodic} under $\phi$ if $\phi^n(\alpha) = \phi^m(\alpha)$ for some $n > m \geq 0$, where we set $\phi^0(\alpha) = \alpha$. 
\item A point $\alpha$ is \textit{strictly pre-periodic} under $\phi$ if it is pre-periodic but not periodic.
\end{itemize}

\begin{theorem}[\cite{quaddiv}] \label{quadfin}
Let $K = \Q$, and $f \in \Z[x]$ be monic and quadratic.  Suppose $f$ is not post-critically finite, and $0$ is strictly pre-periodic under $f$.  Assume further that all iterates of $f$ are irreducible over $\Q$.  Then 
$G_\infty$ has finite index in $\Aut(T_\infty)$.  
\end{theorem}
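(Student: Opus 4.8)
The plan is to reduce, via the method of Stoll as carried out in \cite{quaddiv}, to an arithmetic statement about the forward orbit of the critical point, and then to extract that statement from the two dynamical hypotheses. Write $\gamma$ for the critical point of $f$ and put $\delta_n := f^n(\gamma)$; since $f$ is not post-critically finite the $\delta_n$ are pairwise distinct, and since $0$ is strictly pre-periodic we must have $\gamma \ne 0$. Let $K_n := \Q(f^{-n}(0))$, so that $K_n/K_{n-1}$ is generated by the square roots of the $2^{n-1}$ quantities $\beta - f(\gamma)$ with $\beta \in f^{-(n-1)}(0)$, whose product equals $\prod_\beta(\beta - f(\gamma)) = f^{n-1}(f(\gamma)) = \delta_n$. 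Because every iterate of $f$ is irreducible, $[\Aut(T_\infty):G_\infty(f)] = \prod_{n\ge 1} 2^{2^{n-1}}/[K_n:K_{n-1}]$, which is finite exactly when $[K_n:K_{n-1}] = 2^{2^{n-1}}$ for all large $n$; by Stoll's analysis of the multiplicative relations that can hold among the $\beta - f(\gamma)$ under the Galois action, this maximality propagates up the tower as soon as $\delta_n$ fails to lie in the subgroup $H_{n-1} := \langle -\delta_1, \delta_2, \dots, \delta_{n-1}\rangle$ of $\Q^\times/(\Q^\times)^2$. Hence it is enough to prove $\delta_n \notin H_{n-1}$ for all sufficiently large $n$.

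The pre-periodicity of $0$ enters through a coprimality statement. Suppose a prime $q$ divides both $\delta_n$ and $\delta_m$ with $m < n$; reducing $f^n(\gamma) = f^{n-m}(f^m(\gamma))$ modulo $q$ and using $\delta_m \equiv 0$ gives $f^{n-m}(0) \equiv 0 \pmod q$. Because $0$ is strictly pre-periodic, $\{f^j(0) : j \ge 1\}$ is a finite set of nonzero integers, so $q$ divides one of a fixed finite list of nonzero integers; collect the resulting primes, together with $2$ (to absorb the at-most-power-of-two denominators that occur when $\gamma \notin \Z$), into a finite set $S$. Then, away from $S$, $\delta_n$ is coprime to $\delta_1 \delta_2 \cdots \delta_{n-1}$. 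Since every element of $H_{n-1}$ has square-free part supported on $S \cup \{-1\} \cup \{p : p \mid \delta_1 \cdots \delta_{n-1}\}$, the containment $\delta_n \in H_{n-1}$ forces the square-free part of $\delta_n$ to be supported on the fixed finite set $S \cup \{-1\}$; equivalently, $\delta_n = D \cdot(\text{square})$ for one of the finitely many square-free integers $D$ all of whose prime factors lie in $S$.

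So the remaining task is a primitive-prime-divisor statement: for all large $n$, $\delta_n$ has a prime factor outside $S$ occurring to odd multiplicity — equivalently, $\delta_n$ is not $D$ times a perfect square for any of the finitely many $D$ built from $S$. By the coprimality just established, such a prime factor is automatically new (dividing no earlier $\delta_m$), so the content lies entirely in the odd multiplicity; this is the step I expect to be the main obstacle. The leverage I would use is the identity $\delta_n - \delta_1 = (\delta_{n-1} - \gamma)^2$, a perfect square coprime to $\delta_n$ away from $S$: when $\gamma \in \Z$ and $D = 1$ it makes $-\delta_1$ a difference of two squares, forcing only finitely many $n$ (the $\delta_n$ being distinct), but in general $\delta_n = D X^2$ combines with it to give a Pell-type equation with infinitely many integral solutions, and one must show that the orbit values $\delta_{n-1}$ — whose heights grow without bound — can meet those solutions only finitely often. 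I would do this by a case analysis of local obstructions at the primes of $S$ (exploiting that $(\delta_n)$ is eventually periodic modulo any modulus, so its residues modulo a suitable power of a prime $p \mid D$ form a fixed finite set one arranges to be disjoint from $D\cdot(\Z^\times)^2$ — as $\delta_n \equiv 6 \pmod 9$ excludes $\delta_n \in 3\cdot(\Z^\times)^2$ for the model map $f(x) = x^2 + 3x - 3$, for which $0 \mapsto -3$ is a fixed point), supplemented where needed by finiteness of integral points on the associated conics. Carrying this out uniformly over the finitely many square classes $D$ is the real difficulty; it is also where "not post-critically finite" is indispensable — it keeps the $\delta_n$ distinct and excludes the degenerate case $\gamma = 0$ — just as "all iterates irreducible" powers the reduction of the first paragraph and the hypothesis on $0$ supplies the coprimality of the second.
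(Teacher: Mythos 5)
Your reduction is the same as the paper's up to the last step: you correctly distill the problem to showing that for large $n$ the critical-orbit value $\delta_n = f^n(\gamma)$ has a prime factor outside the fixed finite set $S$ occurring to odd multiplicity (equivalently, $\delta_n$ is not of the form $D\cdot(\text{square})$ with $D$ supported on $S$), and you correctly use strict pre-periodicity of $0$ to obtain the coprimality that makes such a prime automatically primitive, and irreducibility of all iterates to feed Stoll's criterion. Up to that point your argument matches the paper's, including the role of each hypothesis.

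The gap is in how you attack the final finiteness statement. You go back only one step in the orbit, using $\delta_n - \delta_1 = (\delta_{n-1}-\gamma)^2$, so the equation $\delta_n = DX^2$ becomes $DX^2 - Y^2 = \delta_1$ with $Y = \delta_{n-1}-\gamma$: a genus-zero conic. When $D\ne 1$ such Pell-type conics typically have \emph{infinitely many} integral points, so ``finiteness of integral points on the associated conics'' is simply false as a backstop, and there is no reason local congruence obstructions at primes of $S$ must exist for every square-free $D$ supported on $S$ and every $f$ meeting the hypotheses --- you have at best a heuristic, not an argument. The paper instead goes back \emph{two} steps: $\delta_n = D X^2$ is rewritten as $DX^2 = f(f(\delta_{n-2}))$, so the point $(\delta_{n-2},X)$ is an $S$-integral point on the quartic curve $Dy^2 = f(f(x))$. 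By the standing assumption that $f^2(x)=0$ has $4$ distinct roots, this curve has genus $1$, and Siegel's theorem gives only finitely many $S$-integral points for each of the finitely many $D$, forcing $\delta_{n-2}$ into a finite set and hence bounding $n$. That extra backward step, raising the genus so Siegel applies, is the key idea you are missing; without it your proposal does not close.
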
  

The irreducibility hypothesis in Theorem \ref{quadfin} is essential, as will be shown in Section \ref{methods}. It is tempting to replace it with the condition that  the number of irreducible factors of $f^n(x)$ be bounded independently of $n$, but at present no proof is known with this weaker condition. As an illustration, suppose that $f(x)$ splits into two linear factors $g_1(x)g_2(x)$, but $g_1(f^{n}(x))$ and $g_2(f^{n}(x))$ are irreducible for all $n \geq 1$ (such statements are often provable; see for example the discussion of eventual stability in Section \ref{stability} and \cite[Proposition 4.5]{quaddiv}). Even if one computes the Galois groups of $g_1(f^{{n-1}}(x))$ and $g_2(f^{n-1}(x))$, which is also often possible, one must then address the possibility that these groups do not operate independently on the roots of $f^n(x)$. In other words, the splitting fields of $g_1(f^{n-1}(x))$ and $g_2(f^{n-1}(x))$ may have non-trivial intersection. Getting a handle on this intersection appears to be a difficult problem. 

In \cite{quaddiv}, it is shown that Theorem \ref{quadfin} applies to these families of maps:   
\begin{enumerate}
\item $f(x) = x^2 - kx + k$ for all $k \in \Z \setminus \{-2, 0, 2, 4\}$.  
\item   $f(x) = x^2 + kx - 1$ for all $k \in \Z \setminus \{-1, 0, 2\}$. 
\end{enumerate}
In family (1), the exceptions $k = -2, 0, 2$ give polynomials that are post-critically finite, while for $k = 4$ we obtain the reducible polynomial $g(x) = x^2 - 4x + 4$. In \cite[Proposition 4.6]{quaddiv} it is shown that $g^n(x)$ is the square of an irreducible polynomial for all $n \geq 1$, and this is enough to allow for a density zero result for orbits of this map (see Section \ref{density}). However, at present no proof that $[\Aut(T_\infty) : G_\infty(g)] < \infty$ is known. In family (2), $k = 0, 2$ give post-critically finite polynomials, while for $k = -1$ the polynomial $h(x) = x^2 - x - 1$ 
\label{cooled} has the curious property that $h^2(x)$ is irreducible, but $h^3(x)$ factors as the product of two irreducible quartics. This furnishes the same obstacles to showing $[\Aut(T_\infty) : G_\infty(h)] < \infty$ as in the $k = 4$ case for family (1). Interestingly, $h(x)$ is the minimal polynomial of the golden mean. No one knows whether special properties of the golden mean are related to the highly unusual factorization of $h^3(x)$. 

Recently, C. Gratton, K. Nguyen, and T. Tucker \cite{abc} proved another important result in this area, giving evidence that one should expect $[\Aut(T_\infty) : G_\infty(f)] < \infty$ when $f$ is a quadratic polynomial. 
\begin{theorem}[\cite{abc}] \label{abc}
Let $K = \Q$, and let $f(x) \in \Z[x]$ be monic, quadratic, and not post-critically finite. Assume that all iterates of $f$ are irreducible. Then the ABC conjecture implies $[\Aut(T_\infty) : G_\infty(f)] < \infty$.
\end{theorem}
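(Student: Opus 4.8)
The plan is to translate the problem about $T_\infty$ into a statement about the arithmetic of the adjusted critical orbit, and then apply $abc$ to that orbit. After conjugating $f$ by a translation defined over $\Q$ — which, as noted on p.~\pageref{rhodef}, alters neither $G_\infty$ nor the hypotheses — I may assume $f(x)=x^2+c$ with $c\in\Q$, and I write $c_n$ for the numerator of $f^n(0)$, so that $c_{n+1}=c_n^2+c$ up to the evident denominator; since $f$ is not post-critically finite, $0$ is not preperiodic, so $\log|c_n|\sim 2^n\hat h(0)$ with $\hat h(0)>0$.

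First I would reduce the theorem to an arithmetic statement about $(c_n)$. Using that $\Aut(T_n)$ is the $n$-fold iterated wreath product of $\Z/2\Z$, that all iterates of $f$ are irreducible (so $G_{n-1}(f)$ is transitive on the level-$(n-1)$ vertices), and the congruence $\mathrm{disc}(f^n)\equiv\pm c_n\pmod{(\Q^*)^2}$, the growth of the index from level $n-1$ to level $n$ is governed by whether $c_n$ becomes a square once the square-root data at earlier levels is adjoined. By the mechanism underlying Stoll's argument (Theorem~\ref{stollthm}) and the proof of Theorem~\ref{quadfin}, the only square classes of rationals occurring in the splitting field of $f^{n-1}$ lie in the subgroup of $\Q^*/(\Q^*)^2$ generated by $\{-1,c_1,\dots,c_{n-1}\}$; hence it suffices to show that \emph{for all but finitely many $n$, $c_n$ has a primitive prime divisor occurring to an odd power} — equivalently, that the ``primitive part'' $A_n:=\prod_p p^{v_p(c_n)}$, the product over primes $p$ dividing $c_n$ but no earlier $c_m$, is not a perfect square. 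Such a prime cannot be cancelled by anything in the subgroup above, so by Stoll's lemma the full last layer $(\Z/2\Z)^{2^{n-1}}\le\Aut(T_n)$ lies in $G_n(f)$, whence $[\Aut(T_n):G_n(f)]=[\Aut(T_{n-1}):G_{n-1}(f)]$ and the index stabilizes.

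Next I would extract the arithmetic consequences of $abc$. Applying it to the triple $(c_{n-1}^2,-c,c_n)$ (coprime up to a divisor of $c$) gives $\mathrm{rad}(c_n)\gg_\epsilon c_n^{1/2-\epsilon}$. Moreover, because $0$ is the critical point of $x^2+c$ the derivative $(f^m)'(0)$ vanishes, so the Taylor expansion of $f^m$ about $0$ has no linear term; this forces $v_p(c_{km})=v_p(c_m)$ whenever $p\mid c_m$, so $v_p(c_n)$ depends only on the rank of apparition of $p$, and $c_n=\prod_{d\mid n}A_d$ with the $A_d$ pairwise coprime. Since $A_d\mid c_d$ and every proper divisor of $n$ is at most $\lfloor n/2\rfloor$, the imprimitive part $B_n:=c_n/A_n$ satisfies $B_n\le c_{\lfloor n/2\rfloor}^{\tau(n)}=c_n^{o(1)}$, using the doubly-exponential growth of $(c_n)$ together with $\tau(n)=n^{o(1)}$. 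Thus $A_n$ accounts for all of $c_n$ outside a subexponential factor, and $\mathrm{rad}(A_n)\gg A_n^{1/2-\epsilon}$.

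The hard part, as I see it, is the last step: ruling out that $A_n$ is a perfect square for infinitely many $n$. A perfect square sits exactly on the $abc$ radical threshold, so the bound $\mathrm{rad}(A_n)\gg A_n^{1/2-\epsilon}$ does not by itself exclude it. Writing $A_n=y^2$ exhibits $(c_{n-1},y)$ as an integral point on the conic $X^2-B_nY^2=-c$, and for a pair $n<n'$ with $A_n$ and $A_{n'}$ both squares it exhibits a point on the curve $B_{n'}Y^2=f^{\,n'-n}(B_nX^2)$ — elliptic when $n'-n=1$ and of genus growing with $n'-n$ otherwise, hence carrying only finitely many integral points by Siegel and Faltings for each fixed pair. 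The remaining task is to combine these finiteness statements with the doubly-exponential growth of $(c_n)$ and with $abc$ (in the form of uniform or effective bounds on such points) so as to leave room for only finitely many $n$ in total. Granting this, for all large $n$ the numerator $c_n$ carries a primitive prime divisor of odd multiplicity, so by the reduction above $[\Aut(T_n):G_n(f)]$ is eventually constant and $[\Aut(T_\infty):G_\infty(f)]<\infty$.
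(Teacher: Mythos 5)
Your reduction of the theorem to an arithmetic statement about the critical orbit is the right one and matches the paper's: by Theorem \ref{criterion} (together with the hypothesis that all iterates are irreducible), it suffices that for all but finitely many $n$ the critical value $f^n(c)$ have a primitive prime divisor appearing to odd multiplicity, since then $H_n$ is maximal for all large $n$ and the index $[\Aut(T_n):G_n(f)]$ stabilizes. The problem is everything after that. First, a technical point: the rigid divisibility property you invoke ($v_p(c_{km})=v_p(c_m)$, giving $c_n=\prod_{d\mid n}A_d$ with pairwise coprime primitive parts) holds for the orbit of $0$ under $x^2+k$ precisely because the critical point \emph{is} the root of the orbit and the map has no linear term; for a general monic quadratic $f=(x-\gamma)^2+b$ the critical orbit $(f^n(\gamma))$ does not satisfy it, and your conjugation to $x^2+c$ moves the root of the tree and introduces denominators, so this decomposition is not available in the generality the theorem requires. (This is exactly why Stoll's method in Theorem \ref{stollthm} is confined to $x^2+k$ and other families need Siegel-type arguments.)

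Second, and decisively, you have not proved the statement you reduced to. You candidly flag "the hard part" — ruling out that the primitive part is a perfect square for infinitely many $n$ — and then write "granting this." That is the entire content of the theorem: the naive application of $abc$ to $c_{n-1}^2+c=c_n$ only bounds the radical from below by $c_n^{1/2-\epsilon}$, which, as you note, is consistent with the primitive part being a square, and the Siegel/Faltings finiteness for each fixed pair $(n,n')$ does not combine into a finiteness statement over all $n$ without substantial further input. The paper avoids this trap entirely: it quotes Theorem 1.2 of Gratton--Nguyen--Tucker, which shows that $abc$ forces $f^n(c)$ to have a primitive prime divisor appearing to multiplicity \emph{exactly one} for all but finitely many $n$ (for any non-post-critically-finite monic quadratic, no rigid divisibility needed). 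Multiplicity one is automatically odd, so Theorem \ref{criterion} applies immediately. The real work in their result is a height/radical argument showing that the part of $f^n(c)$ supported on primes of multiplicity $\geq 2$ or dividing earlier terms is asymptotically negligible compared to $f^n(c)$ itself; your sketch gestures at pieces of this but does not assemble them, so the proof as written is incomplete at its central step.
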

Theorem \ref{abc} is a slightly generalized form of \cite[Proposition 6.1]{abc}; we explain below how it follows from the main results of \cite{abc}. With minimal difficulty, one can generalize Theorem \ref{abc} so that the field of definition of $f$ is a number field. However, as in Theorem \ref{quadfin}, the irreducibility hypothesis on the iterates of $f$ is essential. Thus we are left with the surprising state of affairs that establishing the irreduciblility of iterates of $f$ is the key step; once that is known, it is an easier path to prove the Galois groups of such iterates are large. We will see this theme again when examining rational functions with non-trivial automorphisms in Section \ref{autom}. See Section \ref{stability} for more on the question of irreducibility of iterates. 

\subsection{A sketch of the method} \label{methods}
Let us briefly sketch the method underlying the results of Section \ref{tour}, restricting ourselves to the situation where $K = \Q$ and $f \in \Z[x]$ is a monic, quadratic polynomial. This case is of sufficient simplicity to highlight the essential elements of the method, but of sufficient depth to require much of their full strength. We denote by $c$ the critical point of $f$, and assume that $c$ does not lie in $f^{-n}(0)$ for any $n \geq 0$, thereby ensuring that $f^n(x)$ has $2^n$ distinct roots. Let $K_n$ denote the field $\Q(f^{-n}(0))$, so that $G_n = \Gal(K_n/\Q)$, and denote by $H_n$ the Galois group of the relative extension $K_{n}/K_{n-1}$ \label{hndef}. One may roughly summarize the method as follows: $H_n$ is as large as possible provided that a prime ramifies in the extension $K_n/K$ that did not already ramify in $K_{n-1}/K$. Candidates for this newly ramified prime are found only among primes dividing $f^n(c)$ that do not divide $f^i(c)$ for $i < n$, and thus we must study the arithmetic of the orbit of $c$ under $f$. Sufficient knowledge of this arithmetic is available only in the cases covered by the results in Section \ref{tour}. 

Note that $K_{n}$ is obtained from $K_{n-1}$ by adjoining the roots of $f(x) - \beta_i$, where $\beta_1, \ldots \beta_{2^{n-1}}$ are the roots of $f^{n-1}(x)$. This is the same as adjoining the $2^{n-1}$ square roots
$\sqrt{\delta_i}$, where
$$\delta_i := \Disc(f(x) - \beta_i),$$
and thus $K_{n}$ is a $2$-Kummer extension of $K_{n-1}$, and we have an injection $H_n \hookrightarrow (\Z/2\Z)^{2^{n-1}}$. This injection is also apparent from our identification of $G_n$ with a subgroup of $\Aut(T_n)$, since $H_n$ must lie in the kernel of the restriction mapping $\Aut(T_n) \to \Aut(T_{n-1})$, which is generated by the transpositions swapping a pair of vertices at level $n$ that are both connected to the same vertex at level $n-1$. We thus refer to $H_n$ as \textit{maximal} when 
$$
\text{$H_n = \ker(\Aut(T_n) \to \Aut(T_{n-1}))$, \hspace{0.1 in} or equivalently $H_n \cong (\Z/2\Z)^{2^{n-1}}$.}
$$
Clearly we have $G_n(f) = \Aut(T_n)$ if and only if $H_i$ is maximal for $i = 1, 2, \ldots, n$. 

Using Kummer theory (e.g. \cite[Section VI.8]{langalg}), $[K_{n} : K_{n-1}]$ is the order of the group $D$ generated by the classes of the $\delta_i$ in 
$K_{{n-1}}^*/K_{n-1}^{*2}$, where $K_{n-1}^{*2}$ denotes the non-zero squares in $K_{n-1}$.  We have
\[
\#D = \frac{2^{2^{n-1}} }{ \#V}, 
\text{ where }
V = \{(e_1, \ldots, e_{2^{n-1}}) \in 
\mathbb{F}_2^{2^{n-1}} : \prod_j \delta_j^{e_j} \in K_{n-1}^{*2} \}.
\]
Thus $V$ is the group of multiplicative relations among the $\delta_i$, up to squares.  One sees easily that $V$ is an $\mathbb{F}_2$-vector space, and that the action of $G_{n-1}$ on the $\delta_i$ gives an action of $G_{n-1}$ on $V$ as linear transformations.  It follows that $V$ is an $\mathbb{F}_2[G_{n-1}]$-module. Perhaps surprisingly, one can show that if $V$ is non-trivial, then it must contain the element $(1, \ldots, 1)$ \textit{provided that the action of $G_{n-1}$ on the $\delta_i$ is transitive}, or equivalently that $f^{n-1}(x)$ is irreducible. One begins by showing that if $V \neq 0$, then the submodule $V^{G_{n-1}}$ of $G_{n-1}$-invariant elements is non-trivial, a result that relies on $G_{n-1}$ being a $2$-group (see \cite[Lemma 1.6]{stoll})). The transitivity of the action of $G_{n-1}$ on the $\delta_i$ then assures that if $V^{G_{n-1}}$ is non-empty, then it must contain $(1, \ldots, 1)$. 

Now $(1, \ldots, 1) \in V$ if and only if 
\begin{equation} \label{discprod}
\prod_{i = 1}^{2^{n-1}} \Disc(f(x) - \beta_i)
\end{equation}
is a square in $K_{n-1}$. But $\Disc(f(x) - \beta_i) = -4(b - \beta_i)$, where we write $f(x) = (x- c)^2 + b$. As the $\beta_i$ vary over all roots of $f^{n-1}(x)$, the product in \eqref{discprod} is $(-4)^{2^{n-1}}f^{n-1}(b)$. But $f^{n-1}(b) = f^{n-1}(f(c)) = f^n(c)$, and hence \eqref{discprod} is a square in $K_{n-1}$ if and only if $f^n(c)$ is a square in $K_{n-1}$. 

To sum up, assuming that $f^{n-1}(x)$ is irreducible, we've shown 
\begin{equation} \label{onecrit}
\text{$[K_n : K_{n-1}] = 2^{2^{n-1}}$ if and only if $f^n(c)$ is not a square in $K_{n-1}$}.
\end{equation} 
This key result has generalizations in a variety of directions. An easy and direct generalization is to replace the ground field $\Q$ with any number field $K$ (and allow $K$ to be the field of definition for $f$). For a similarly small price, one can let $G_n$ be the Galois group over $K$ of polynomials of the form $g(f^n(x))$, where $f$ is still quadratic and $g$ is arbitrary \cite[Lemma 3.2]{quaddiv}. When $f$ is allowed to be a quadratic rational function, the only known result becomes significantly more complicated: the condition is essentially that the numerator of $f^n(c_1)f^n(c_2)$ not be a square in $K_{n-1}$, where $c_1$ and $c_2$ are the two critical points of $f$ \cite[Theorem 3.7]{galrat}. 

To apply these results in any of the above settings requires showing that a given element of $K_{n-1}$ is not a square in $K_{n-1}$, a problem which seems difficult at first blush since $K_{n-1}$ is generally a huge-degree extension. However, the element in question (e.~g.~$f^n(c)$) is in fact an element of the ground field, which makes things considerably easier. Let us return to the setting where $f(x)$ is a monic, quadratic polynomial defined over $\Z$, and our ground field is $\Q$. If $f^n(c)$ is divisible to an odd power by a prime $p$, then $f^n(c)$ can only become a square in $K_{n-1}$ if $p$ ramifies in $K_{n-1}$. The iterative nature of the extensions $K_{n-1}$ allows us to explicitly describe a set of primes that must include all those that ramify in $K_{n-1}$. More specifically, a calculation with resultants gives
\begin{equation} \label{disc}
\Disc(f^k) = \pm 2^{2^k} (\Disc(f^{k-1}))^2  f^k(c)  
\end{equation}
for all $k \geq 1$ \cite[Lemma 2.6 and discussion following]{quaddiv}. The appearance of $f^k(c)$ in \eqref{disc} is actually rather intuitive: $f^k$ has a multiple root modulo an odd prime $p$ only when $f^{k-1}$ already had such a root, or a critical point appears in $f^{-k}(0)$ modulo $p$. The latter condition is equivalent to $f^k(c) \equiv 0 \bmod{p}$, or $p \mid f^k(c)$. Because $K_{n-1}$ is the splitting field of $f^{n-1}(x)$ over $\Q$, the true discriminant of the extension $K_{n-1}/\Q$ divides $\Disc(f^{n-1})$. A simple induction using \eqref{disc} gives that the only primes dividing $\Disc(f^{n-1})$ are those dividing one of $2, f(c), f^2(c), \ldots, f^{n-1}(c)$. We at last obtain the criterion that gives rise to nearly all of the results of Section \ref{tour}:
\begin{theorem}[\cite{quaddiv}] \label{criterion}
Let $f \in \Z[x]$ be monic and quadratic with critical point $c$, and let $K_n$ and $H_n$ be defined as on p. \pageref{hndef}. Assume that $f^{n-1}(x)$ is irreducible and there exists an odd prime $p \in \Z$ whose $p$-adic valuation $v_p$ satisfies $v_p(f^n(c))$ odd and $v_p(f^i(c)) = 0$ for $i = 1, 2, \ldots, n-1$. Then $H_n$ is maximal. 
\end{theorem}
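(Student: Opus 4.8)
The plan is to combine the Kummer-theoretic dichotomy \eqref{onecrit} with a ramification argument governed by the discriminant identity \eqref{disc}. Since $f^{n-1}(x)$ is irreducible by hypothesis, \eqref{onecrit} says that $H_n$ is maximal exactly when $f^n(c)$ fails to be a square in $K_{n-1}$; note $f^n(c)\neq 0$ since $c\notin f^{-n}(0)$, and for the odd prime $p$ in question one has $c\in\Z_p$ (because $2c\in\Z$), so $v_p(f^i(c))$ is a non-negative integer. Thus the whole theorem reduces to showing that this particular $p$ prevents $f^n(c)$ from being a square in $K_{n-1}$.

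First I would check that $p$ is unramified in $K_{n-1}/\Q$. Iterating \eqref{disc} and using $v_p(2^{2^k})=0$ for odd $p$ gives $v_p(\Disc(f^{n-1})) = \sum_{k=1}^{n-1} 2^{n-1-k}\, v_p(f^k(c))$; equivalently, as already observed in the text, the primes dividing $\Disc(f^{n-1})$ all divide $2f(c)f^2(c)\cdots f^{n-1}(c)$. The hypotheses --- $p$ odd and $v_p(f^i(c))=0$ for $1\le i\le n-1$ --- therefore force $p\nmid\Disc(f^{n-1})$, so $f^{n-1}(x)$ stays separable modulo $p$ and $p$ is unramified in its splitting field $K_{n-1}$.

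Next comes the contradiction. Suppose $f^n(c)=y^2$ with $y\in K_{n-1}$, and pick a prime $\mathfrak{p}$ of $\O_{K_{n-1}}$ over $p$. Since $e(\mathfrak{p}/p)=1$, the valuation $v_{\mathfrak{p}}$ restricts to $v_p$ on $\Q^\times$, so applying $v_{\mathfrak{p}}$ to $f^n(c)=y^2$ gives $v_p(f^n(c))=2v_{\mathfrak{p}}(y)$, an even number --- contradicting the hypothesis that $v_p(f^n(c))$ is odd. Hence $f^n(c)$ is not a square in $K_{n-1}$, \eqref{onecrit} yields $[K_n:K_{n-1}]=2^{2^{n-1}}$, and since $H_n=\Gal(K_n/K_{n-1})$ already embeds in $(\Z/2\Z)^{2^{n-1}}$ this embedding is an isomorphism; that is, $H_n$ is maximal.

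The substantive inputs --- the reduction \eqref{onecrit} (resting on Stoll's lemma that a nonzero $\F_2[G_{n-1}]$-module acted on by a $2$-group has nonzero invariants, together with transitivity coming from irreducibility of $f^{n-1}$) and the discriminant formula \eqref{disc} --- are already established, so what remains is essentially bookkeeping. The only place demanding care is the role of the oddness of $p$: it is precisely what renders the $2$-power factors in \eqref{disc} invisible to $v_p$ and what lets one read the ramification behaviour of $p$ off the polynomial discriminant, and the argument genuinely fails at $p=2$, which is why the criterion is phrased only for odd primes.
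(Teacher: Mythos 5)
Your argument is correct and follows the paper's own route: reduce via \eqref{onecrit} to showing $f^n(c)$ is not a square in $K_{n-1}$, use the iterated discriminant identity \eqref{disc} to conclude $p\nmid\Disc(f^{n-1})$ and hence $p$ is unramified in $K_{n-1}$, and then observe that a square in $K_{n-1}$ must have even $\mathfrak p$-adic valuation, contradicting the oddness of $v_p(f^n(c))$. The paper states exactly this chain (``if $f^n(c)$ is divisible to an odd power by $p$, then $f^n(c)$ can only become a square in $K_{n-1}$ if $p$ ramifies in $K_{n-1}$,'' followed by the induction on \eqref{disc}), so no substantive difference in method.
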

In other words, assuming that $f^{n-1}(x)$ is irreducible, the element $f^n(c)$ of the sequence $(f^i(c))_{i \geq 1}$ must have a primitive prime divisor appearing to odd multiplicity. In terms of ramification, Theorem \ref{criterion} requires that a ``new" prime $p$ ramify in $K_n$ (that is, one that has not already ramified in $K_{i}$ for $i < n$). This result has generalizations in the same directions as those of \eqref{onecrit}; see \cite[Theorem 3.3]{quaddiv} and \cite[Corollary 3.8]{galrat}. 

The hypothesis that $f^{n-1}(x)$ be irreducible is essential in Theorem \ref{criterion}. Fortunately, in many cases one finds that \textit{all} iterates of $f(x)$ are irreducible, a fact we discuss further in Section \ref{stability}. Indeed, to show this it is enough to prove that $f$ is irreducible and the orbit of $c$ under $f$ (called the \textit{critical orbit} of $f$) contains no squares (see Theorem \ref{stab}). This fact, together with Theorem \ref{criterion}, shows that the nature of $G_\infty(f)$ depends crucially (critically, even) on arithmetic properties of the critical orbit of $f$.
This makes for a striking analogy with complex and real dynamics, where analytic properties of the critical orbit of a quadratic polynomial have been shown to determine fundamental dynamical behavior of the polynomial.  For instance, if $f \in \mathbb{C}[z]$ is quadratic, then membership in the Mandelbrot set -- and equivalently the connectedness of the filled Julia set of $f$ -- is determined by whether the critical orbit remains bounded \cite[Section 3.8]{Devaney}.  

To apply Theorem \ref{criterion} requires getting a handle on the primes dividing elements in the critical orbit of $f$, which is generally very difficult. One may obtain some tantalizing results, however. In Section \ref{density} we will see that it is vital to be able to show that $H_n$ is maximal for infinitely many $n$. We invite the reader to show that in the setting of Theorem \ref{criterion} there are infinitely many $n$ such that $f^n(c)$ has a primitive prime divisor\footnote{or you can take the easy way out and look at \cite[Theorem 6.1]{quaddiv}.}; unfortunately one cannot guarantee that the first appearance of such a prime in the sequence $(f^i(c))_{i \geq 1}$ is to odd multiplicity. Similarly, one can show that there must be infinitely many primes $p$ dividing at least one term $f^n(c)$ to odd multiplicity; unfortunately, one cannot guarantee that when they do so their appearance is primitive. 

The ABC conjecture rescues us from this predicament, as shown in \cite[Theorem 1.2]{abc}: it implies that the for all but finitely many $n$, there is a primitive prime divisor of $f^n(c)$ appearing to multiplicity 1 (note that in our situation, $f(x)$ is dynamically ramified in the terminology of \cite{abc} if and only if $f(x) = x^2$). Thus Theorem \ref{abc} is an immediate corollary of \cite[Theorem 1.2]{abc}. Interestingly, J. Silverman \cite{jhszig} has shown that in higher dimensions, Vojta's conjecture implies a result on primitive prime divisors similar to \cite[Theorem 1.2]{abc}. However, as the Galois theory of preimages in the higher-dimensional setting is all but nonexistent at present, the Galois-theoretic implications of Silverman's result remain unclear. 

In special circumstances, we may even obtain unconditional results. 
When $f(x) = x^2 + k$, an easy application of Theorem \ref{stab} shows that all iterates of $f$ are irreducible provided that $-k$ is not a square. The lack of linear term in $f$ ensures that the resulting critical orbit $(f^i(0))_{i \geq 1}$ satisfies a powerful property known as \textit{rigid divisibility} \cite[p. 524]{quaddiv}. Namely, setting $a_n = f^n(c)$ we have:
\begin{itemize}
\item $v_p(a_n) > 0$ implies $v_p(a_{mn}) = v_p(a_n)$ for all $m \geq 1$, and 
\item $p^e \mid a_n$ and $p^e \mid a_m$ implies $p^e \mid a_{\gcd(m,n)}$. 
\end{itemize}
One then defines a ``primitive part" $b_n$ of each $a_n$ by setting $b_n = \prod_{d \mid n} a_n^{\mu(n/d)}$, where $\mu$ denotes the M\"obius function, and shows that the $b_n$ are pairwise relatively prime. By Theorem \ref{criterion}, to prove that $H_n$ is maximal then only requires showing that $b_n$ is not a unit times a square. This is Odoni's criterion, used by Cremona in \cite{cremona} and by Stoll to prove Theorem \ref{stollthm}. We now may shed some light on the case of $f(x) = x^2 + 3$: we have
$f^2(0) = 2^2 \cdot 3$, and $f^3(0) = 7^2 \cdot 3$, whence $b_2 = 2^2$ and $b_3 = 7^2$. Note that $f^2(0)$ is not a square in $K_1 = \Q(\sqrt{-3})$, and hence $G_2(f) = \Aut(T_2)$ by \eqref{onecrit}, but $f^3(0)$ is a square in $K_2 = \Q(f^{-2}(0))$, and hence $G_3(f) \neq \Aut(T_3)$. 

Another favorable case occurs when $f$ maps $0$ into a cycle not containing $0$ (or in other words, $0$ is strictly pre-periodic under $f$). Then we have a finite set $R$ consisting of all primes dividing at least one of the elements in the orbit of $0$. If $p$ is not such a prime, then $p \mid f^n(c)$ implies $f^{m+n}(c) \equiv f^m(f^n(c)) \equiv f^m(0) \not\equiv 0 \bmod{p}$ for any $m \geq 1$, and hence $p$ divides at most one element of the critical orbit of $f$. An extreme example of this situation comes from the polynomial $f(x) = x^2 - x + 1$ considered by Odoni in \cite{odoniwn}. Here $f$ sends $0$ to the fixed point $1$, and thus $R$ is empty and the elements of the critical orbit (considered as belonging to $\Z[\frac{1}{2}])$ are pairwise relatively prime. To show $G_\infty(f) = \Aut(T_\infty)$ via Theorem \ref{criterion}, the challenge is then to show that $f^n(c)$ is not a square for any $n \geq 1$, which proves surprisingly difficult \cite[p. 3]{odoniwn}.

In the more general situation when $0$ is pre-periodic under $f$, to apply Theorem \ref{criterion} it suffices to show that $f^n(c)$ is not a square times a (possibly empty) product of primes in $R$. One may appeal to Siegel's theorem to achieve this, for the price of excluding a finite set of $n$. Indeed, if $f^n(c)$ is $r$ times a square, for some product $r$ of primes in $R$, then the curve $ry^2 = f(f(x))$ has an $S$-integral point with $x = f^{n-2}(c)$ (here $S$ is empty if $c \in \Z$ and $S = \{2\}$ if $c \not\in \Z$). But there can be only finitely many such points, for each of the finitely many choices of $r$. This establishes Theorem \ref{quadfin}. 

Finally, let us return to the rational function $\phi$ given in \eqref{special}. As noted on p. \pageref{special}, $\phi$ has a critical point at $x = 1$ that lies in a two-cycle, making $\phi$ similar to a polynomial. Moreover, $\phi$ sends $0$ into the two-cycle $1 \to -1 \to 1$, ensuring that we are in the situation of the previous two paragraphs, and even better with $R = \emptyset$. It follows that any odd prime divides the numerator of at most one term of the wandering critical orbit $\{\phi^n(-1/3) : n \geq 1\}$, and the same is true of the sequence $(a_n) := (p_n(-1/3)p_n(1) : n \geq 1)$, where $p_n(x)$ is essentially the numerator of $\phi^n(x)$ (see \cite[Section 2]{galrat} for a precise definition). The numbers $p_n(-1/3)p_n(1)$ play the role of $f^n(c)$ in Theorem \ref{criterion} (see \cite[Corollary 3.8]{galrat}), though in Theorem \ref{criterion} we required that the desired prime $p$ be odd, and here we require it to be odd and not equal to 3.
By reducing modulo 5, one shows that no element of $(a_n)$ is plus or minus a square. Hence each element of $(a_n)$ is divisible by some prime to odd multiplicity, and if this prime is not two or three then
its appearance is primitive and we may apply the equivalent of Theorem \ref{criterion}. The proof that $[\Aut(T_\infty) : G_\infty(\phi)] < \infty$ thus finishes with a calculation showing the evenness of the $2$-adic and $3$-adic valuation of all terms of $(a_n)$.  See the end of Section 3 of \cite{galrat} for the full details. 

A major obstacle to extending these methods to higher-degree polynomials is that in \eqref{disc} and Theorem \ref{criterion}, the appearance of $f^n(c)$ is replaced by $\prod f^n(c)$, where the product is over all critical points $c$ of $\phi$. There appears to be no easy way to rule out arithmetic interactions among the elements of several critical orbits.

\section{The image of $\rho$: exceptional cases} \label{exceptional}

In light of the results of Section \ref{generic}, and especially Theorems \ref{odonigen} and \ref{abc}, it is tempting to conjecture that $[\Aut(T_\infty) : G_{\infty}(\phi)] < \infty$ unless there is a structural reason this cannot occur.  In the setting of Galois representations attached to elliptic curves, the structural reason is the curve having complex multiplication, and Serre's theorem (see \eqref{serre}) shows that this is the only exception. In our case, one encounters a profusion of structural reasons, four of which we discuss in this section. Unfortunately, there does not seem to be a general principle to suggest that these four exhaust all possibilities, and correspondingly it seems impossible at present to make a convincing finite-index conjecture. However, enough results and examples have now been accumulated for quadratic $\phi$ that we pose a conjecture in this case: see Conjecture \ref{quadratic}.

We begin with examples of four rational functions for which $[\Aut(T_\infty) : G_\infty(\phi)]$ is infinite, each illustrating a class of exceptions to any finite index conjecture:
\begin{enumerate}
\item[(a)] $\phi(x) = x^2 - 2$
\item[(b)] $\phi(x) = x^3 + 2$
\item[(c)] $\phi(x) = x^2 + x$
\item[(d)] $\phi(x) = (x^2 + 1)/x$
\end{enumerate}
In (a), $\phi$ is post-critically finite, which we recall means that the forward orbit of each critical point of $\phi$ is finite.  In (b), $\phi$ is not post-critically finite, but has overlapping critical orbits: $0$ is a point of multiplicity 3, which may be thought of as two co-incident critical orbits. In (c), the root $0$ of $T_\infty$ is periodic under $\phi$. In (d), $\phi$ commutes with a non-trivial M\"obius transformation. 

In cases (a) and (b), the impediments to $[\Aut(T_\infty) : G_\infty(\phi)]$ being finite are geometric, in that they are invariant under changing the root of the tree $T_\infty$. Cases (c) and (d), on the other hand, are arithmetic in that they depend on the root of $T_\infty$ having special algebraic properties. In Section \ref{pcf} we discuss the case where $\phi$ is post-critically finite, and we show $G_\infty(\phi)$ has infinite index in $\Aut(T_\infty)$ for such maps. We also discuss what is known about the group $G^{\text{arith}}$ \label{aimg} obtained by replacing the root $0$ of $T_\infty$ by an element $t$ that is transcendental over $K$, and working over the ground field $K(t)$. This group gives an over-group for $G_\infty(\phi)$, and the latter is obtained by the specialization $t = 0$. 
 Cases (b), (c), and (d) are discussed in Sections \ref{overlap}, \ref{pdc}, and \ref{autom}, respectively.

\subsection{Post-critically finite rational functions} \label{pcf}

The discussion just before and after Theorem \ref{criterion} shows how the arithmetic of the forward orbit of the critical point of a quadratic polynomial $\phi$ plays a key role in the study of $G_\infty(\phi)$. A similar relationship holds for more general maps, as we now explain. 
Let $K_n = K(\phi^{-n}(0))$, and consider the question of which primes of $K$ ramify in $K_n$, and in particular which ramify in $K_n/K$ but not in $K_{n-1}/K$. Thanks to several generalizations of the discriminant formula \eqref{disc}, it is known that these primes must belong to a very restricted set. First W. Aitken, F. Hajir, and C. Maire gave a generalization to polynomials of arbitrary degree \cite{hajir}, and recently J. Cullinan and Hajir \cite{HC} as well as the author and M. Manes  \cite[Theorem 3.2]{galrat} have produced further generalizations to rational functions. In each case, the formulae show that the only primes of $K$ that can ramify in the extensions $K_n/K$ are those dividing $\phi^i(c)$ for some critical point $c$ of $\phi$ (aside from a finite set of primes that does not grow with $n$, such as the primes dividing the resultant of $\phi$). 

Now a generic rational function $\phi \in K(x)$ of degree $d$ has $2d - 2$ distinct critical points, all with infinite and non-overlapping orbits. This allows for the collection of primes dividing at least one element of the form $\phi^i(c)$ to be large, and thus there are many possibilities for primes ramifying in $K_n$. At the extreme of non-generic behavior are the post-critically finite rational functions, which have only a \textit{finite set} of primes, independent of $n$, that can ramify in any $K_n$ (this is among the main results in \cite{hajir} and \cite{HC}). In other words, the extension $K_\infty := \bigcup_{n = 1}^\infty K_n$ is a finitely ramified extension of $K$.

Because the inertia subgroups at the ramified primes generate $\Gal(K_\infty/U_\infty)$, where $U_\infty$ is the (presumably small) maximal unramified sub-extension of $K_\infty$, we should generally expect $G_\infty(\phi)$ to be a small subgroup of $\Aut(T_\infty)$ when $\phi$ is post-critically finite. We now give a result in this direction, whose proof evolved through discussions between the author and R. Pink.
\begin{theorem} \label{infindex}
Suppose that $K$ is a global field of characteristic $0$ or $> d$, and let $\phi \in K(x)$ be a post-critically finite map of degree $d$. Then $[\Aut(T_\infty) : G_\infty(\phi)]$ is infinite. 
\end{theorem}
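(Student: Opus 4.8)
The plan is to isolate the one structural consequence of post-critical finiteness that matters here and then win by a soft comparison of abelianizations. That consequence is: since $\phi$ is post-critically finite and $\char K$ is $0$ or $>d$, the generalizations of the discriminant formula \eqref{disc} of Aitken--Hajir--Maire and of Cullinan--Hajir (discussed just above) show that $K_\infty:=\bigcup_{n\geq1}K_n$ is unramified over $K$ outside a \emph{finite} set $S$ of primes. Everything else will be formal: the proof shows that the maximal elementary abelian $2$-quotient of any finite-index closed subgroup of $\Aut(T_\infty)$ is infinite, whereas that of $G_\infty(\phi)=\Gal(K_\infty/K)$ is finite.

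First I would pin down $\Aut(T_\infty)^{\mathrm{ab}}$. For each $i\geq1$, the map $s_i\colon\Aut(T_\infty)\to\Z/2\Z$ sending $g$ to the sign of the permutation it induces on the $d^i$ vertices of level $i$ is a continuous homomorphism. Using $\Aut(T_n)\cong\Aut(T_{n-1})\wr S_d$ together with the elementary identity $(H\wr S_d)^{\mathrm{ab}}\cong H^{\mathrm{ab}}\times S_d^{\mathrm{ab}}\cong H^{\mathrm{ab}}\times\Z/2\Z$ (the $S_d$-action on the $d$ principal subtrees being transitive), an induction gives $\Aut(T_n)^{\mathrm{ab}}\cong(\Z/2\Z)^n$, with $s_1,\dots,s_n$ dual to a basis. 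Since the transition maps $\Aut(T_n)\twoheadrightarrow\Aut(T_{n-1})$ are surjective, the homomorphism $s=(s_i)_{i\geq1}$ is a continuous surjection $\Aut(T_\infty)\to\prod_{i\geq1}\Z/2\Z$ with kernel the closed commutator subgroup, so $\Aut(T_\infty)^{\mathrm{ab}}\cong\prod_{i\geq1}\Z/2\Z$ is infinite. If $G\leq\Aut(T_\infty)$ is closed of finite index, then its image in $\prod_{i\geq1}\Z/2\Z$ is closed of finite index, hence open, hence contains $\prod_{i\geq N}\Z/2\Z$ for some $N$; being a quotient of $G^{\mathrm{ab}}$ of exponent $2$, this forces $G^{\mathrm{ab}}/2G^{\mathrm{ab}}$ to be infinite.

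On the arithmetic side, for $G=G_\infty(\phi)=\Gal(K_\infty/K)$ the quotient $G^{\mathrm{ab}}/2G^{\mathrm{ab}}$ is the Galois group of the largest subextension $L$ of $K_\infty/K$ that is abelian over $K$ of exponent dividing $2$. Such an $L$ is multiquadratic and unramified outside $S$, hence is contained in the compositum of the quadratic extensions of $K$ unramified outside $S$ --- and there are only finitely many such extensions (Hermite--Minkowski) --- so $[L:K]<\infty$. Here one uses $\char K\neq2$, which holds since $\char K$ is $0$ or greater than $d\geq2$; this is what puts exponent-$2$ abelian extensions under the control of Kummer theory. Combining the two sides: if $[\Aut(T_\infty):G_\infty(\phi)]$ were finite, then $G_\infty(\phi)^{\mathrm{ab}}/2G_\infty(\phi)^{\mathrm{ab}}$ would be both infinite and finite, a contradiction.

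The genuinely non-formal ingredient is the finite-ramification statement invoked at the start --- this is where post-critical finiteness and the hypothesis on $\char K$ are really used, and it is cited from the literature. Among the remaining steps the one deserving care is the computation of $\Aut(T_\infty)^{\mathrm{ab}}$: because $\Aut(T_\infty)$ is not topologically finitely generated, one must argue through the explicit characters $s_i$ and the surjectivity of the level-truncation maps rather than through any bound on the number of generators. I do not foresee a serious obstacle beyond assembling these pieces; the only mild surprise is that so crude an invariant --- the abelianization modulo $2$ --- already separates $G_\infty(\phi)$ from every finite-index subgroup of $\Aut(T_\infty)$.
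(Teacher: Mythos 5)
Your argument is correct, and it shares the paper's central group-theoretic device --- the surjection $\Aut(T_\infty)\twoheadrightarrow\prod_{i\geq 1}\Z/2\Z$ coming from $\Aut(T_n)^{\mathrm{ab}}\cong(\Z/2\Z)^n$, together with the finite-ramification theorem of Cullinan--Hajir for $K_\infty/K$ --- but it diverges on the arithmetic input, and in a way worth noting. The paper proves the stronger structural statement that $G_\infty(\phi)$ is topologically generated by the conjugacy classes of finitely many elements, quoting Ihara's theorem on $G_{K,S}$ in the number field case and the topological finite generation of the maximal tame quotient in the function field case (the latter being where the hypothesis $\char K>d$ is used to force tameness of all ramification in $K_\infty$); the infinite index then follows because such a subgroup has finite image in the abelianization. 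You bypass both of these comparatively heavy inputs: since only the exponent-$2$ abelian quotient of $G_\infty(\phi)$ is needed, it suffices to know that the maximal multiquadratic subextension of $K_\infty/K$ is finite, which follows from the classical finiteness of quadratic extensions of a global field unramified outside $S$ (Hermite--Minkowski, or equivalently the finiteness of $\O_{K,S}^\times/(\O_{K,S}^\times)^2$ and of the $2$-torsion of the $S$-class group via Kummer theory, valid since $\char K\neq 2$). The trade-off is that the paper's route yields the extra information that $G_\infty(\phi)$ is generated by finitely many conjugacy classes --- a fact of independent interest for the study of $G^{\mathrm{arith}}$ and inertia generators in Section \ref{pcf} --- whereas yours is shorter, needs only $\char K\neq 2$ rather than full tameness on the group-theoretic side, and isolates exactly the invariant (the $2$-elementary abelianization) that separates $G_\infty(\phi)$ from every finite-index closed subgroup. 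Your topological bookkeeping (compactness of $G_\infty(\phi)$, closed finite-index subgroups of $\prod\Z/2\Z$ being open) is also handled correctly.
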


\begin{proof}
We first argue that if $H \leq \Aut(T_\infty)$ is (topologically) generated by the conjugacy classes of finitely many elements, then $[\Aut(T_\infty) : H]$ is infinite. A standard result in group theory is that $\Aut(T_n) \cong S_d^{(n)}$, where the latter group is the $n$-fold iterated wreath product of the symmetric group $S_d$ on $d$ letters. Moreover, the abelianization of $\Aut(T_n)$ is given by
\begin{equation} \label{abseq}
\Aut(T_n)^{\text{ab}} \cong ((S_d)^{\text{ab}})^n \cong (\Z/2\Z)^n,
\end{equation}
where the first isomorphism follows from the fact that the abelianization of the the wreath product of groups $G_1$ and $G_2$ is $G_1^{\text{ab}} \times G_2^{\text{ab}}$ \cite[p. 215]{tggt}. Denote by 
$$\tau : \Aut(T_\infty) \twoheadrightarrow (\Z/2\Z)^{\mathbb{N}}$$ 
the homomorphism obtained from \eqref{abseq}.  By our assumption about $H$, the group $\tau(H)$ is finitely generated, and hence finite. Therefore $[\tau(\Aut(T_\infty)) : \tau(H)]$ is infinite, whence $[\Aut(T_\infty) : H]$ is infinite as well. 



By the main result of \cite{HC}, the extension $K_\infty$ of $K$ is unramified outside a finite set $S$ of places of $K$. Assume first that $K$ is a number field.  It follows from a result of Ihara (see \cite[Theorem 10.2.5]{nsw}) that the Galois group $G_{K,S}$ of the \textit{maximal} extension of $K$ unramified outside $S$ is (topologically) generated by the conjugacy classes of finitely many elements. As $G_\infty(\phi)$ is a quotient of this group, it shares the same property. 
When $K$ is a global function field, the group $G_{K,S}$ may be quite complicated in general. However, our assumption that $K$ has characteristic $> d$ implies that the ramification in $K_\infty$ is tame, and hence $G_\infty(\phi)$ is a quotient of the maximal tamely ramified extension of $K$ that is unramified outside $S$. This latter group is (topologically) finitely generated \cite[Corollary 10.1.6]{nsw}, and hence so is $G_\infty(\phi)$. In particular, $G_\infty(\phi)$ is (topologically) generated by the conjugacy classes of finitely many elements. 
\end{proof}

Let us now discuss the group $G^{\text{arith}}$, first alluded to on p. \pageref{aimg}. Let $t$ be transcendental over $K$, and consider the action of the absolute Galois group of $K(t)$ on the tree $T_\infty(t) \subset \overline{K(t)}$ of iterated pre-images of $t$ under $\phi$. The image of this action is $G^{\text{arith}}$, also known as the \textit{(profinite) arithmetic iterated monodromy group} of $\phi$. Note that $\Aut(T_\infty(t))$ and $\Aut(T_\infty)$ are naturally isomorphic, so we may think of $G_\infty(\phi)$ as the subgroup of $G^{\text{arith}}$ obtained via the specialization $t = 0$. Thus in a loose sense $G^{\text{arith}}$ gives the group one expects $G_\infty(\phi)$ to be under the choice of a generic root of $T_\infty$.



As $G^{\text{arith}}$ is the Galois group of the extension $K_{\infty ,t} := \bigcup_{n \geq 1} K(\phi^{-n}(t))$ over $K(t)$, it has a normal subgroup $G^{\text{geom}}$ corresponding to the subfield
$L(t)$, where $L := \overline{K} \cap K_{\infty, t}$ is the maximal constant field extension contained in $K_{\infty ,t}$. This gives an exact sequence
\begin{equation} \label{sequence}
1 \to G^{\text{geom}} \to G^{\text{arith}} \to \Gal(L/K) \to 1,
\end{equation}
The primes of $L(t)$ over which $K_{\infty, t}$ is ramified correspond to the ramification points of the covers $\phi^n : \mathbb{P}^1 \to \mathbb{P}^1$, for $n = 1, 2, \ldots$. One easily sees that this is the same as the post-critical set $C$ of $\phi$, namely the set $\{\phi^n(c) : \text{$n \geq 1$ and $c$ is a critical point of $\phi$}\}$.  In the case where the characteristic of $K$ is either 0 or greater than the degree $d$ of $\phi$, the extension $K_{\infty, t}$ has only tame ramification over $L(t)$, and hence $G^{\text{geom}}$ is a quotient of the tame fundamental group of $\mathbb{P}^1_{\overline{K}} \setminus C$. When $\phi$ is post-critically finite, the resulting finiteness of $C$ implies that this tame fundamental group is (topologically) finitely generated, and hence so is $G^{\text{geom}}$. Moreover, the inertia subgroup corresponding to each point in $C$ is pro-cyclic, and one may hope to give an explicit description of the action of its generator on $T_\infty(t)$. Note that the group $G^{\text{geom}}$ does not change under extension of $L$, and thus when $K$ is a number field we may calculate $G^{\text{geom}}$ over $\C$. In this case, $G^{\text{geom}}$ is given by the closure of the image of the topological fundamental group $\pi_1(\mathbb{P}^1(\C) \setminus C)$ in $\Aut(T_\infty(t))$; this image is known as the \textit{iterated monodromy group} of $\phi$. (We ignore the base point of the fundamental group, as it only affects the resulting subgroup of $\Aut(T_\infty(t))$ by a conjugation.) Then the action of inertial generators may be calculated explicitly using $\phi$-lifts of certain loops in $\C$, and one obtains a beautiful description of these generators in terms of a finite automaton. See for instance \cite{nek, neksurvey} for more on this theory.  When $K$ is a field of characteristic $> d$ and $\phi$ is post-critically finite, one may hope that inertial generators of the action of $G^{\text{geom}}$ on $T_\infty(t)$ may still be given by the states of a finite automaton. However, this is only known at present in the case where $\phi$ is a quadratic rational function \cite{pink1}. 

What, then, may be said about the group $G^{\text{arith}}$? Unfortunately, the extension $L$ in \eqref{sequence} remains mysterious in general, particularly in the case where $\phi$ is post-critically finite. An outstanding contribution of \cite{pink3} is the computation of $L$ when $\phi$ is a post-critically finite quadratic polynomial defined over a general field $K$. In particular, $[L : K]$ is finite when the orbit of the critical point of $\phi$ is pre-periodic and the post-critical set has at least 3 elements. Otherwise, $[L : K]$ is infinite. In either case, the extension $L/K$ is contained in the extension of $K$ generated by the primitive $(2^n)$th roots of unity for $n = 1, 2 \ldots$. It follows that $G^{\text{arith}}$ is a topologically finitely generated subgroup of $\Aut(T_\infty(t))$. When $\phi$ is a quadratic rational function that is \textit{not} post-critically finite, then $G^{\text{arith}}$ is completely determined in \cite{pink2}; see the discussion following Question \ref{restrictions}.  In Section \ref{overlap} we discuss $G^{\text{arith}}$ and $G^{\text{geom}}$ when $\phi$ is a non-post-critically finite map of the form $x^d + b$.

To close this subsection, we mention that it is a very interesting question, both when $\phi$ is post-critically finite and in general, to determine whether there are special properties of the conjugacy classes in $G_\infty(\phi)$ of Frobenius elements at the various primes of $K$. In the general case, the author and N. Boston have made some conjectures; we refer the reader to \cite{settled} for details. When $G_\infty(\phi)$ is a small subgroup of $\Aut(T)$, the possibility arises that special properties of the Frobenius conjugacy classes could be related to the structure of $G_\infty(\phi)$. To state this question more precisely, we note that the \textit{Hausdorff dimension} of $G_\infty(\phi)$ is by definition
$$ \liminf_{n \to \infty} \frac{\log \#G_n(\phi)}{\log \# \Aut(T_n)}.$$
\begin{question}
Suppose that the Hausdorff dimension of $G_\infty(\phi)$ is $< 1$. How does the structure of $G_\infty(\phi)$ relate to properties of Frobenius conjugacy classes?
\end{question}

\subsection{Rational functions with overlapping critical orbits} \label{overlap}
Post-critically finite maps represent an extreme among non-generic critical configurations, and it is natural to ask whether less extreme configurations also lead to restrictions on $G_\infty(\phi)$. A first remark is that even the seemingly severe restriction that $\phi$ be a polynomial, i.e. have a totally ramified fixed critical point, does not a priori impose restrictions on $G_\infty(\phi)$, as evidenced by Theorem \ref{odonigen}. Similarly, the quadratic map in \eqref{special} has one wandering critical point and one in a 2-cycle, yet has $G_\infty(\phi) = \Aut(T_\infty)$. 

On the other hand, let us consider maps of the form $\phi(x) = x^d + b$, where $d \geq 2$ and $b \in K$ is such that $0$ has infinite forward orbit under $\phi$ (or equivalently, $\phi$ is not post-critically finite). The fact that $\phi$ has only a single critical orbit besides the fixed point at infinity is enough to force $G_\infty(\phi)$ to be a very small subgroup of $\Aut(T_\infty)$. Indeed, the extension $K_{n+1}/K_n$ is obtained by adjoining the $d$th roots of $d^{n}$ elements, and hence for $n \geq 1$ has degree at most $d^{d^{n}}$ (since $K_n$ contains a primitive $d$th root of unity when $n \geq 1$). But the kernel of the restriction $\Aut(T_{n+1}) \to \Aut(T_n)$ is isomorphic to $(S_d)^{d^n}$, and thus has order $(d!)^{d^n}$. It follows that the Hausdorff dimension of $G_\infty(\phi)$ is at most $(\log d)/ (\log(d!))$, and in particular, $[\Aut(T_\infty) : G_\infty(\phi)]$ is infinite for $d \geq 3$. It follows from Stirling's formula that $(\log d)/ (\log(d!))$ is roughly $1/d$. More precisely, 
$$\frac{\log d}{\log d!} =  \left(d - \frac{d}{\ln d} + O(1) \right)^{-1}.$$  For a more thorough examination of the nature of $G_\infty(\phi)$ in this case, see \cite{zdc}. We remark that it is reasonable to expect that the image of $G^{\text{geom}}$ in $\Aut(T_n(t))$ is isomorphic to the $n$-fold wreath product of $\Z/d\Z$, for each $n \geq 1$. 
In this case, the extension $L$ in \eqref{sequence} is simply $K(\zeta_d)$, and hence $G^{\text{arith}}/G^{\text{geom}}$ has order at most $d-1$. 

In light of the preceding analysis, it seems likely that $[\Aut(T_\infty) : G_\infty(\phi)]$ is infinite whenever $\phi$ has degree at least 3 and only a single wandering critical orbit. More generally, we pose this question:

\begin{question} \label{restrictions}
Suppose that $\phi$ is not post-critically finite. What restrictions on the critical orbits of $\phi$ ensure that $[\Aut(T_\infty) : G_\infty(\phi)]$ is infinite?
\end{question}

In the case where $\phi$ is quadratic, Question \ref{restrictions} has been resolved by R. Pink \cite{pink3} as follows. If $\gamma_1$ and $\gamma_2$ are the two critical points of $\phi$, and there is a relation of the form 
\begin{equation} \label{relation}
\text{$\phi^{r+1}(\gamma_1) = \phi^{r+1}(\gamma_2)$ for some $r \geq 1$}, 
\end{equation}
then $G^{\text{arith}}$ has Hausdorff dimension $1 - 2^{-r}$ in $\Aut(T_\infty(t))$, and in particular $[\Aut(T_\infty) : G_\infty(\phi)]$ is infinite. Moreover, $G^{\text{arith}}/G^{\text{geom}}$ has order 1 or 2. In the absence of a relation of the form given in \eqref{relation}, Theorem 4.8.1(a) of \cite{pink3} gives 
\begin{equation} \label{full}
G^{\text{arith}} = G^{\text{geom}} = \Aut(T_\infty(t)).
\end{equation}
 
Let us give an example of this kind of behavior, which can be found in \cite[Example 4.9.5]{pink3}. Consider the map 
\begin{equation*}
\phi(x) = \frac{x^2 - a}{x^2 + a},
\end{equation*} 
where $a \in \Q \setminus \{0, \pm 1\}$. The critical points of $\phi$ are $0$ and $\infty$, and we have $\phi(0) = -1$, $\phi(\infty) = 1$, and $\phi^2(0) = \phi^2(\infty) = (1-a)/(1+a)$. Moreover, one checks that the stipulation that $a \not\in \{0, \pm 1\}$ implies that $\phi$ is not conjugate to any of the maps in the list of Manes-Yap \cite{manes-yap}, and thus is not post-critically finite. Hence $G^{\text{arith}}$ is a subgroup of $\Aut(T_\infty(t))$ of Hausdorff dimension $1/2$, and so $G_\infty(\phi)$ has Hausdorff dimension at most 1/2.  

 

\subsection{Rational functions for which $0$ is periodic} \label{pdc} 

Let $K$ be a global field, and recall our running assumption that for each $n \geq 1$, the solutions to $\phi^n(x) = 0$ are are distinct. Suppose that $\phi^k(0) = 0$ for some $k \geq 1$, so that $\phi$ has the cycle $0 \mapsto a_1 \mapsto a_2 \cdots \mapsto a_{k-1} \mapsto 0$ in $\mathbb{P}^1(K)$.  If we set $a_0 = 0$, then for each $n \geq 1$ we have $a_{r_n} \in \phi^{-n}(0)$, where 
$n \equiv r_n \bmod{k}$ and $0 \leq r_n \leq k-1$. 
The $a_i$ all lie in $K$, and hence each set $\phi^{-n}(0)$ contains an element of $K$, which must be fixed by all elements of $G_n(\phi)$. As $\Aut(T_n)$ acts naturally on the set $\phi^{-n}(0)$, we obtain an injection 
\begin{equation} \label{stabinj}
G_n(\phi) \hookrightarrow \Stab(a_{r_n}),
\end{equation}
where $\Stab(a_{r_n})$ denotes the stabilizer in $\Aut(T_n)$ of $a_{r_n} \in \phi^{-n}(0)$.
Now it's easy to see that $\Aut(T_n)$ acts transitively on $\phi^{-n}(0)$, and hence the orbit of $a_{r_n}$ has size $d^n$. Thus $[\Aut(T_n) : \Stab(a_{r_n})] = d^n$ by the orbit-stabilizer theorem. Therefore from \eqref{stabinj} we have $[\Aut(T_n) : G_n(\phi)] \geq d^n$, and hence $[\Aut(T_\infty) : G_\infty(\phi)]$ is infinite. Another way to say this is to note that since $\phi(a_{r_{n+1}}) = a_{r_n}$, restriction gives a natural surjection $\Stab(a_{r_{n+1}}) \to \Stab(a_{r_n})$, and we may thus define $\Stab_\infty$ to be the inverse limit of these stabilizers. Intuitively, $\Stab_\infty$ is the stabilizer in $\Aut(T_\infty)$ of a single infinite branch in $T_\infty$. Note that $[\Aut(T_\infty) : \Stab_\infty] = \infty$, since $[\Aut(T_n) : \Stab(a_{r_n})] = d^n$. Moreover, \eqref{stabinj} gives
\begin{equation} \label{stabinj2}
G_\infty(\phi) \hookrightarrow \Stab_\infty.
\end{equation}
In light of \eqref{stabinj2}, we think of $G_\infty(\phi)$ as a subgroup of $\Stab_\infty$. If there are no other special circumstances forcing $G_\infty(\phi)$ to be smaller than the generic case (such as $\phi$ being post-critically finite), then it is reasonable to expect that $[\Stab_\infty : G_\infty(\phi)]$ is finite. 
\begin{question}
Let $K$ be a global field and let $\phi \in K(x)$ satisfy $\phi^k(0) = 0$ for some $k \geq 1$. Under what conditions is it possible to prove that $[\Stab_\infty : G_\infty(\phi)]$ is finite?
\end{question}
At present, there is not a single known example of a rational function for which $[\Stab_\infty : G_\infty(\phi)]$ is finite. 
 
We close this subsection by noting that if $\phi^k(0) = 0$, then the the number of irreducible factors of the numerator of $\phi^n(x)$ is without bound as $n$ grows. Indeed, one may assume inductively that the numerator of $\phi^{k(n-1)}(x)$ has at least $n-1$ irreducible factors. But then the fact that $x$ divides the numerator of $\phi^k(x)$ implies that the numerator of $\phi^{k(n-1)}(x)$ divides the numerator of $\phi^{kn}(x)$, proving that the latter has at least $n$ irreducible factors. We return to this topic in Section \ref{stability}. \label{noevstab}




\subsection{Rational functions that commute with non-trivial M\"obius transformations} \label{autom}

Suppose that $m \in \PGL_2(K)$ satisfies
\begin{equation} \label{conditions} 
m^{-1} \circ \phi \circ m = \phi \qquad \text{and} \qquad m(0) = 0.
\end{equation}
Then $m$ acts on $T_\infty$ since $m(0) = 0$, and the action of $G_\infty(\phi)$ on $T_\infty$ commutes with that of $m$, since $m$ is defined over $K$. This is analogous to the Galois action on the Tate module of an elliptic curve commuting with the action of an endomorphism of the curve. Let $A(\phi) \leq \Aut(T_\infty)$ be the subgroup generated by the actions of all $m \in \PGL_2(K)$ satisfying the conditions in \eqref{conditions}. Then we obtain an injection
\begin{equation} \label{centralizer}
G_\infty(\phi) \hookrightarrow C(\phi),
\end{equation}
where $C(\phi)$ is the centralizer of $A(\phi)$ in $\Aut(T_\infty)$.
While $A(\phi)$ must be finite \cite{silvermandef}, and indeed its group structure is limited by the very few finite subgroups of $\PGL_2(K)$, little is known about $C(\phi)$. In particular:
\begin{conjecture}
We have $[\Aut(T_\infty) : C(\phi)] = \infty$ when $A(\phi)$ is non-trivial.
\end{conjecture}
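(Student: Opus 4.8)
The plan is to reduce the conjecture to a uniform counting estimate at finite level. Since $A(\phi)$ is non-trivial, one may fix $m \in \PGL_2(K)$ satisfying the conditions in \eqref{conditions} whose action on $T_\infty$ is a non-trivial tree automorphism $M$; by \cite{silvermandef} the order $r$ of $M$ is finite, and $r \geq 2$. As $C(\phi)$ centralizes all of $A(\phi)$, in particular $C(\phi)$ is contained in the centralizer of $\langle M\rangle$, so it suffices to prove that a single element $M$ of finite order, acting on $T_\infty$ and fixing the root, has centralizer of infinite index in $\Aut(T_\infty)$. Writing $\pi_n \colon \Aut(T_\infty) \to \Aut(T_n)$ for restriction and $M_n := \pi_n(M)$, one has $\pi_n\big(Z_{\Aut(T_\infty)}(M)\big) \subseteq Z_{\Aut(T_n)}(M_n)$ because homomorphisms preserve commutation, and $\pi_n$ is onto; hence
$$\big[\Aut(T_\infty) : Z_{\Aut(T_\infty)}(M)\big] \;\geq\; \big[\Aut(T_n) : Z_{\Aut(T_n)}(M_n)\big]$$
for every $n$, and it suffices to show the right-hand side tends to infinity (note $M_n \neq 1$ for $n$ large, since $M \neq 1$).

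The essential geometric input is that a non-identity element of $\PGL_2(\overline K)$ fixes at most two points of $\mathbb{P}^1(\overline K)$. Each level-$\ell$ vertex of $T_\infty$ is a point of $\phi^{-\ell}(0) \subseteq \mathbb{P}^1(K^{\mathrm{sep}})$, and the non-identity power $M_n^j$ — which is induced by the Möbius map $m^j$ — fixes such a vertex exactly when $m^j$ fixes the corresponding point; therefore \emph{every non-identity power of $M_n$ fixes at most two vertices on any single level}, a bound independent of $n$. Consequently, on level $n-1$ the number of vertices lying in a non-free $\langle M_n\rangle$-orbit is at most $2(r-1)$, so the number of non-free orbits is at most $2(r-1)$ and the number of free orbits is at most $d^{n-1}/2$, since each free orbit has size equal to the order of $M_n$, which is $\geq 2$. (When $0$ is periodic $M$ may actually fix an entire infinite branch, because $m$ then fixes every point of the finite forward orbit of $0$; but the per-level bound still holds, and if that orbit has three or more points then $m$ fixes three points of $\mathbb{P}^1$ and is trivial, so this case is severely constrained in any event.)

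The count is then performed inside $N_n := \ker\big(\Aut(T_n) \to \Aut(T_{n-1})\big) \cong \prod_v S_d$, the product over the $d^{n-1}$ level-$(n-1)$ vertices $v$, so $|N_n| = (d!)^{d^{n-1}}$. Conjugation by $M_n$ permutes the $d^{n-1}$ factors according to the action of $M_n$ on level $n-1$, twisting each factor by the bijection $M_n$ induces on children; thus $Z_{N_n}(M_n)$ consists of the $\langle M_n\rangle$-equivariant tuples $(\sigma_v)$, and such a tuple is determined by its values at one representative $v_0$ of each $\langle M_n\rangle$-orbit on level $n-1$, where $\sigma_{v_0}$ may range over the subgroup of $S_d$ commuting with the (cyclic) image of $\mathrm{Stab}_{\langle M_n\rangle}(v_0)$ in the symmetric group on the children of $v_0$. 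Hence $|Z_{N_n}(M_n)| \leq (d!)^{o_n}$, where $o_n$ is the number of $\langle M_n\rangle$-orbits on level $n-1$, and the previous paragraph gives $o_n \leq d^{n-1}/2 + 2(r-1)$ once $M_n \neq 1$. Since $N_n \cap Z_{\Aut(T_n)}(M_n) = Z_{N_n}(M_n)$, one concludes
$$\big[\Aut(T_n) : Z_{\Aut(T_n)}(M_n)\big] \;\geq\; \big[N_n : Z_{N_n}(M_n)\big] \;\geq\; (d!)^{\,d^{n-1}/2 \,-\, 2(r-1)},$$
which tends to infinity with $n$ because $d \geq 2$ and $r$ is fixed. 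Combined with the first paragraph, this proves $[\Aut(T_\infty) : C(\phi)] = \infty$.

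The argument is soft throughout, so there is no single deep step; the one point that genuinely requires care is the uniformity of $o_n$, equivalently the assertion that the fixed locus of $M$ inside $T_\infty$, measured level by level, stays bounded as $n$ grows. This is precisely where the two-fixed-point rigidity of $\PGL_2$ enters, and it is what distinguishes the automorphism obstruction from a generic finite subgroup of $\Aut(T_\infty)$: such a subgroup could fix a positive proportion of each level and so possess a comparatively large centralizer. The estimate is moreover quantitative — it forces the Hausdorff dimension of $C(\phi)$, and hence of $G_\infty(\phi)$, to be at most $1 - \tfrac12\big(1 - \tfrac1d\big) < 1$ — so the obstruction here is one of density of the group, not merely of index.
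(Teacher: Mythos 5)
The paper does not prove this statement: it appears only as a conjecture, and the sole corroborating evidence the paper offers is the citation of \cite{galrat}, which computes $C(\phi)$ to have Hausdorff dimension $1/2$ for the single quadratic family $\phi(x) = b(x^2+1)/x$. Your argument, by contrast, is a general proof of the conjecture, and after checking it carefully I believe it is correct. The decisive input, correctly identified in your closing paragraph, is that the level-$\ell$ vertices of $T_\infty$ are $d^\ell$ distinct points of $\mathbb{P}^1(K^{\mathrm{sep}})$ on which a non-identity element of $\PGL_2$ can fix at most two; so each non-identity power of $M$ fixes at most two vertices per level, yielding a uniform bound $o_n \leq d^{n-1}/2 + O(1)$ on the number of $\langle M\rangle$-orbits at level $n-1$, and the equivariant-tuple count in $N_n \cong (S_d)^{d^{n-1}}$ then forces the index of the centralizer to diverge. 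Two small points. The sentence asserting that ``a single element $M$ of finite order, acting on $T_\infty$ and fixing the root, has centralizer of infinite index'' is not true of an arbitrary finite-order tree automorphism, which may fix a positive proportion of each level; your proof only ever uses the Möbius fixed-point bound, so the argument is unaffected, but the reduction is overstated as written. Also, the phrase ``free orbit has size equal to the order of $M_n$'' should say the order of the restriction $M_{n-1}$ to level $n-1$ (which can be smaller than the order of $M_n$); this does not change the bound, since the restricted order is still $\geq 2$ once $d^{n-1} > 2$, and in fact Burnside's lemma applied to the faithful $\langle M_{n-1}\rangle$-action gives the cleaner estimate $o_n \leq d^{n-1}/2 + 2$. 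Finally, accumulating $\log|Z_{N_k}(M_k)| \leq o_k \log(d!)$ over all $k \leq n$ gives Hausdorff dimension $\leq 1/2$ for $C(\phi)$ rather than the weaker $\tfrac12 + \tfrac1{2d}$ you state, matching the exact value computed in \cite{galrat} for the degree-two family.
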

The requirement that $A(\phi)$ be non-trivial is akin to considering an elliptic curve with complex multiplication, though in the latter setting $C(\phi)$ is a Cartan subgroup, which is both a very small subgroup of $\GL(2, \Z_\ell)$, and has the striking property of being nearly abelian. In the dynamical setting, it seems unlikely that $C(\phi)$ is close to abelian, and we will see an example of this in a moment.
A seemingly much more difficult issue than studying $C(\phi)$ is to resolve the following:
\begin{question} \label{nontrivaut}
Let $K$ be a global field and let $\phi \in K(x)$ satisfy $\#A(\phi) > 1$. Under what conditions is it possible to prove that $[C(\phi) : G_\infty(\phi)]$ is finite?
\end{question}

The only case where these issues have been studied in detail is when $\phi$ has degree 2 \cite{galrat}. Let us consider the family
\begin{equation} \label{family}
\phi(x) = \frac{b(x^2+1)}{x} \qquad (b \in K).
\end{equation}
Here $A(\phi)$ is generated by the action of the map $x \to -x$, unless $b = \pm 1/2$, but in this latter case $\phi$ is post-critically finite and so fits under the rubric of Section \ref{pcf}. The group $C(\phi)$ \label{cphiref} is studied in \cite[Section 4]{galrat}, where it is shown that $C(\phi)$ has Hausdorff dimension $1/2$, and hence $[\Aut(T_\infty) : C(\phi)] = \infty$. In spite of this, $C(\phi)$ has an index-two subgroup that is isomorphic to $\Aut(T_\infty)$ \cite[Proposition 4.1]{galrat}, a state of affairs that is made possible by the self-similarity of the tree $T_\infty$. 


Several of the main results of \cite{galrat} relate to Question \ref{nontrivaut}. For simplicity, we state them in the case $K = \Q$. 
\begin{theorem}[\cite{galrat}] \label{gallstone} Let $K = \Q$.  There is a density 0 set of primes $S \subset \Z$ such that if $b \in \Z$ is not divisible by any $p \in S$ and $\phi(x) = \frac{b(x^2+1)}{x}$, then $G_\infty(\phi) \cong C(\phi)$.
\end{theorem}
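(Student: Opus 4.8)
The plan is to run the iterative argument of Section~\ref{methods}, with the over-group $C(\phi)$ of \eqref{centralizer} in the role played there by $\Aut(T_\infty)$. Write $K_n=\Q(\phi^{-n}(0))$ and $H_n=\Gal(K_n/K_{n-1})$. The critical points of $\phi(x)=b(x^2+1)/x$ are $\pm1$, with $\phi(\pm1)=\pm2b$, and since $\phi$ commutes with $m(x)=-x$ we have $\phi^n(-1)=-\phi^n(1)$ for all $n$. Now $K_n$ is obtained from $K_{n-1}$ by adjoining the square roots of the discriminants $\mathrm{Disc}(b(x^2+1)-\beta x)=\beta^2-4b^2$ as $\beta$ ranges over $\phi^{-(n-1)}(0)$, and these occur in equal pairs $\{\beta,-\beta\}$ because $m$ permutes $\phi^{-(n-1)}(0)$; hence $[K_n:K_{n-1}]\le 2^{2^{n-2}}$, the numerical shadow of $G_\infty(\phi)\subseteq C(\phi)$, and this bound is attained for all $n$ exactly when $G_\infty(\phi)=C(\phi)$ --- call $H_n$ \emph{$C$-maximal} in that case. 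Just as in the derivation of \eqref{onecrit}, the fact that each $\Aut(T_{n-1})$ is pro-$2$ lets one apply Stoll's lemma to the $\mathbb{F}_2[G_{n-1}]$-module of multiplicative relations among the pair-discriminants; together with transitivity of $G_{n-1}$ on the pairs --- i.e.\ irreducibility of the numerator of $\phi^{n-1}$ --- this reduces $C$-maximality of $H_n$ to a non-squareness condition in $K_{n-1}$ on $a_n$, the numerator of $\phi^n(1)$. This is the $C(\phi)$-refinement of Theorem~\ref{criterion} worked out in \cite{galrat}.

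It therefore suffices to show, for $b$ outside a suitable set and all $n$: (i) the numerator of $\phi^{n-1}$ is irreducible over $\Q$; and (ii) $a_n$ is not a square in $K_{n-1}$ (more precisely, $a_n$ is not $\pm r\cdot(\text{square})$ for $r$ a product of primes from a fixed finite ``bad'' set). For (ii) I would bound ramification by a discriminant formula for quadratic rational functions --- the analogue of \eqref{disc} --- which places the primes ramifying in $K_{n-1}/\Q$ inside a fixed finite set $B$ (dividing $2b$, plus a further set independent of $n$) together with the primes dividing $a_1,\dots,a_{n-1}$. The decisive feature of this family is that, outside $B$, a prime divides \emph{at most one} $a_i$: if $p\notin B$ and $p\mid a_i$ then $\phi^i(1)\equiv0$ modulo a prime over $p$, hence $\phi^{i+1}(1)\equiv\phi(0)=\infty$ and $\phi^j(1)\equiv\infty$ for all $j>i$, so $p\nmid a_j$ --- dynamically, $0$ falls at once into the fixed point $\infty$. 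Consequently every prime divisor of $a_n$ lying outside $B$ is automatically primitive, so (ii) holds as soon as $a_n$ is not $\pm r\cdot(\text{square})$ with $r$ built from the primes of $B\cup\{2\}$.

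Ruling out such an identity is where real work is needed. For each of the finitely many $r$, an equation $a_n=\pm r\cdot(\text{square})$ becomes, after passing to the homogeneous equation in the coprime numerator and denominator $(u,v)$ of $\phi^{n-2}(1)$ and using $\phi^n=\phi^2\circ\phi^{n-2}$, an integral point on one of finitely many genus-one curves $s^2=\pm r'\cdot(\text{binary quartic in }u,v)$; Siegel's theorem bounds these, hence the possible values of $\phi^{n-2}(1)$, and since the critical orbit of $\phi$ is infinite this leaves only finitely many levels $n$. A parallel analysis settles (i): a mild extension to rational functions of the stability criterion of Theorem~\ref{stab} (see Section~\ref{stability}) reduces it to the critical orbit containing no squares, with the one extra wrinkle that the numerator of $\phi^n$ is even in $x$, so one must also exclude its splitting into conjugate factors --- again a polynomial condition on $b$. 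What remains is bookkeeping: each level $n$ surviving from (i) and (ii) forces $b$ into an explicit finite, sparse set $\Sigma_n$, one lets $S$ be the set of primes dividing a member of $\bigcup_n\Sigma_n$, and a sparseness estimate shows $S$ has natural density zero. For $b$ divisible by no $p\in S$, (i) and (ii) then hold for all $n$, so every $H_n$ is $C$-maximal and $G_\infty(\phi)=C(\phi)$.

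The principal obstacle is the \emph{wandering} of the critical orbit of $\phi$. In the pre-periodic setting behind Theorem~\ref{quadfin} the correcting factors $r$ come from a \emph{fixed finite} set, so a single appeal to Siegel's theorem suffices; here one must instead preclude square values simultaneously at infinitely many levels $n$ while keeping the exceptional set of $b$ sparse enough to have prime support of density zero. Making the ``at most one orbit term'' dichotomy compress the infinitely many levels into a finite Diophantine problem, and then bounding the residual exceptional $b$ uniformly, is the delicate point; the rest is a fairly routine transcription of the machinery of \cite{galrat} and Section~\ref{methods}.
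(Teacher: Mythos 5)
Your setup is sound: the reduction to $C$-maximality of $H_n$ via a non-squareness condition on the numerator $a_n$ of $\phi^n(1)$ is indeed the engine of \cite{galrat}, and the observation that $0\mapsto\infty\mapsto\infty$ forces primes outside a finite bad set to divide at most one term of $(a_n)$ is exactly the arithmetic leverage that makes this family tractable. But the endgame is where your proposal and the paper diverge, and where your argument does not close.

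First, the Siegel's-theorem mechanism you invoke operates \emph{for a single fixed} $b$: for one $\phi$, it shows that $a_n$ can be $\pm r\cdot(\text{square})$ at only finitely many levels $n$, yielding $[C(\phi):G_\infty(\phi)]<\infty$. That is precisely the content of the \emph{second} theorem the paper quotes from \cite{galrat}, which holds for a larger congruence class of $b$. Theorem~\ref{gallstone} asserts exact equality $G_\infty(\phi)=C(\phi)$, which requires $C$-maximality at \emph{every} level $n$, and Siegel gives no uniformity to rule out the finitely many exceptional levels. Your attempt to repair this --- letting $\Sigma_n$ be the bad $b$'s at level $n$ and setting $S$ to be the primes dividing members of $\bigcup_n\Sigma_n$ --- has two problems: the curves whose integral points you must bound have coefficients depending on both $n$ and on $b$ itself (through the bad set $B\supset\{p:p\mid 2b\}$), so the finiteness of $\Sigma_n$ is not established; and even granting each $\Sigma_n$ finite, you give no mechanism by which the prime support of $\bigcup_n\Sigma_n$ is density zero. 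Ineffectivity of Siegel makes even the existence of a ``sparseness estimate'' doubtful.

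Second, you miss the structure of the paper's set $S$, which is stated explicitly right after the theorem: $S$ is the set of primes dividing the numerator of $\phi_1^n(1)$ for some $n\geq 1$, where $\phi_1=(x^2+1)/x$ is the normalization at $b=1$ --- in other words, $S=P_{\phi_1}(1)$. This is a single fixed set, independent of $b$, and its density-zero property is itself a theorem (case (4) of Theorem~\ref{sumup}, an instance of the density machinery of Section~\ref{density}). This is a rather elegant bootstrap that your approach does not recover: the density-zero of $S$ is not a by-product of a Diophantine exceptional-set bound, but an application of the very density results these arboreal representations were developed to prove. The content of the theorem's hypothesis ``$b$ is not divisible by any $p\in S$'' then feeds directly into a careful valuation analysis of the numerators $a_n$ (noting, e.g., that the only troublesome primes are those $\equiv 1\bmod 4$), not into a Siegel-type argument. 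So the proposal is incorrect as a proof of this particular statement; it is closer to a sketch of the weaker finite-index companion theorem.
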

In fact the set $S$ is given explicitly: it is the set of primes dividing the numerator of $\phi_1^n(1)$ for some $n \geq 1$, where $\phi_1 = (x^2+1)/x$.  All $p \in S$ satisfy $p \equiv 1 \pmod{4}$.  
In particular, the theorem applies to $$b = 1, 3, 7, 9, 11, 13, 17, 19, 21, 23, 27, 31, 33, 37, 39, 43, 47, 49, \ldots$$ It would be interesting to obtain a similar result with weaker hypotheses on $b$, which may well be possible by refining the methods of \cite{galrat}. Another consequence of the work in \cite{galrat} is:
\begin{theorem}[\cite{galrat}]
Let assumptions and notation be as in Theorem \ref{gallstone}. Then we have $[C(\phi) : G_\infty(\phi)] < \infty$ for $b \equiv 2, 3 \pmod{5}$ and $b \equiv 1, 2, 5, 6 \pmod{7}$. In addition $[C(\phi) : G_\infty(\phi)] < \infty$ for all $b \in \Z$ with $1 \leq |b| \leq 10,000$.
\end{theorem}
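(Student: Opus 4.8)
The plan is to carry the proof of Theorem \ref{criterion} over to the family \eqref{family}, using the rational-function analogues of \eqref{onecrit} and Theorem \ref{criterion} established in \cite{galrat}, and then to reduce the claim to a congruence that can be verified modulo a small prime. Recall that for $\phi(x) = b(x^2+1)/x$ the critical points are $1$ and $-1$, that $m : x \mapsto -x$ satisfies $m^{-1}\circ\phi\circ m = \phi$ so that $A(\phi) = \langle m \rangle$ is non-trivial, and that $\phi$ fixes $\infty$ while sending $0 \mapsto \infty$, so that the root $0$ of $T_\infty$ is strictly pre-periodic onto a fixed point. By \cite[Theorem 3.7 and Corollary 3.8]{galrat} there is an explicit integer sequence $(a_n)$ attached to the critical orbits of $\phi$ --- the analogue of $(f^n(c))_n$, built from the numerators of $\phi^n$ at $1$ and $-1$ and normalized so as to detect membership in $C(\phi)$ rather than in $\Aut(T_\infty)$ --- with the property that if some prime $p$ outside a fixed finite set $B = B(\phi)$ (which we may take to consist of $2$, $3$, and the primes dividing $b$) has $v_p(a_n)$ odd and $v_p(a_i) = 0$ for $i < n$, then $K_n/K_{n-1}$ attains the maximal degree compatible with membership in $C(\phi)$. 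Since the relative extensions being maximal from some level onward forces the total index $[C(\phi):G_\infty(\phi)]$ to be finite, it suffices to exhibit such a primitive, odd-multiplicity prime divisor of $a_n$ for all but finitely many $n$.

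Next I would record the divisibility structure of $(a_n)$. Because $0$ is strictly pre-periodic and lands on the fixed point $\infty$, the set of primes appearing in the forward orbit of $0$ is essentially empty, and the argument used for $x^2 - x + 1$ and for the map \eqref{special} at the end of Section \ref{methods} applies: outside $B$, each prime divides at most one term $a_n$. Hence $a_n$ can fail the primitive-odd-multiplicity test only when, up to sign and a factor supported on $B$, it is a perfect square. The problem is thereby reduced to showing that, for all but finitely many $n$, $a_n$ is not equal to $(\text{sign})\cdot(\text{square})\cdot(B\text{-supported integer})$; and since the $B$-part has fixed support, this comes down to ruling out $a_n$ being $\pm$ a square times a power of $2$ or $3$.

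To accomplish this I would reduce modulo a well-chosen auxiliary prime $\ell$ coprime to $B$. The reduction of $\phi$ is a self-map of $\mathbb{P}^1(\mathbb{F}_\ell)$ depending only on $b \bmod \ell$, so $(a_n \bmod \ell)$ is eventually periodic with period bounded in terms of $\ell$ alone and is computed by a short orbit calculation. For $\ell = 5$ one checks that when $b \equiv 2$ or $3 \pmod 5$ the residues $a_n \bmod 5$ always avoid the set of residues that can be $\pm$ a square times the (constrained) reduction of the $B$-part, so $a_n$ is never of the forbidden shape for any $n \ge 1$; the case $\ell = 7$, $b \equiv 1,2,5,6 \pmod 7$ runs the same way. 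Combined with the divisibility structure and a direct check of the $2$-adic (and where needed $3$-adic) valuations of the $a_n$ --- exactly the computation that closes the proof for \eqref{special} in \cite{galrat} --- this settles the two congruence families. For the range $1 \le |b| \le 10000$, where no single modulus suffices, I would instead, for each such $b$, search over small auxiliary primes $\ell = 5, 7, 11, 13, \ldots$ for one making the same obstruction hold along the (eventually periodic) mod-$\ell$ orbit of the critical data, and verify by machine that such an $\ell$ exists for every $b$ in the range; the preceding argument then applies without change.

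The step I expect to be the main obstacle is proving that the modular obstruction is \emph{complete}, i.e.\ that it certifies $a_n \ne \pm(\text{square})\cdot(B\text{-part})$ for every $n$ and not merely for a typical $n$. This requires pinning down $B(\phi)$ exactly, controlling precisely which residues modulo $\ell$ the $B$-part of $a_n$ can take (so that the forbidden residue set is chosen correctly), and handling the primes $2$ and $3$ separately: since they lie in $B$ they cannot themselves be the new primitive prime, so one must show their valuations in $(a_n)$ are even --- or at least too small to absorb an odd-multiplicity prime --- by an explicit $2$-adic and $3$-adic analysis of the recursion expressing $a_{n+1}$ as a rational function of $a_n$ and $b$. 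A more routine obstacle is that for $1 \le |b| \le 10000$ the genuine content is simply the existence, for each such $b$, of a working auxiliary prime; this is a finite but non-trivial computation, and the lack of either a uniform $\ell$ or a Siegel- or ABC-type substitute is precisely what keeps the unconditional statement confined to the two congruence families and the explicit finite range.
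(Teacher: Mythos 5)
Your proposal misidentifies the role the modular argument plays, and it omits the tool — Siegel's theorem — that the paper actually uses to close the proof. Your plan is to show, by reducing modulo $5$ or $7$, that $a_n$ is never $\pm$ a square times a factor supported on $B$. But for a generic $b$ the set $B$ contains $2$, $3$, and primes dividing $b$, and the $B$-supported factor is an arbitrary element of the subgroup of $(\Z/\ell\Z)^\times$ that these primes generate. For $\ell = 5$ the prime $2$ alone generates all of $(\Z/5\Z)^\times$, so $\pm(\text{square})\cdot(B\text{-part})$ can be \emph{any} nonzero residue modulo $5$, and the modular obstruction you want to invoke simply cannot exist. This is not the "main obstacle" you anticipated (pinning down $B$ exactly); it is a structural failure of the method as you set it up.

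What the congruence conditions mod $5$ and mod $7$ actually buy is \emph{stability}: via the analogue of Theorem \ref{stab} for the family \eqref{family} (Theorem 4.5 of \cite{galrat}), it suffices to show certain critical-orbit quantities are \emph{not squares in $\Q$}, and there the modular argument is clean because there is no stray $B$-part to absorb — a nonzero nonsquare residue modulo $5$ certifies non-squareness in $\Q$ outright. Once stability is in hand, the paper's finite-index conclusion does not come from any modular argument at all, but from the Siegel-type step sketched for Theorem \ref{quadfin}: if $a_n$ were $\pm r \cdot (\text{square})$ for one of the finitely many $B$-supported $r$, one would obtain an $S$-integral point on a fixed curve built from the second iterate, and Siegel's theorem allows only finitely many such $n$. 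That is exactly why the theorem asserts finite index rather than equality, and why your argument — which, if it worked, would prove $G_\infty(\phi) = C(\phi)$ for every $n$ — is claiming more than the hypotheses can deliver. Your treatment of $1 \le |b| \le 10000$ inherits the same gap: the machine computation in \cite{galrat} verifies stability for those $b$, after which Siegel again does the heavy lifting; it is not a per-$b$ search for an auxiliary prime making a full modular obstruction.
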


The proofs of these two results follow lines similar to the proof of Theorem \ref{quadfin} (see Theorem 5.3 of \cite{galrat} and the remark following for an analogue of Theorem \ref{quadfin}). On the one hand, the argument requires developing considerable machinery to handle the fact that $\phi$ is a rational function rather than a polynomial, but on the other hand it is easier in that $0$ has an extremely simple orbit under $\phi$, being sent directly to the fixed point $\infty$. Another key to the proof is that there is essentially only one critical orbit whose arithmetic one must keep track of: while technically there are two, one is the image of the other under $x \mapsto -x$. 
As with the maps in Theorem \ref{quadfin}, one finds that $[C(\phi) : G_\infty(\phi)] < \infty$ follows from the seemingly much weaker assertion that the numerators of $\phi^n(x)$ are irreducible for all $n \geq 1$. 

In light of the analysis in \cite{galrat}, we make the following conjecture:
\begin{conjecture}[\cite{galrat}] \label{famconj}
Let $K = \Q$.  If $\phi(x) = \frac{b(x^2+1)}{x}$ with $b \in \Q$ and $b \not\in \{0, \pm \frac{1}{2}\}$, then $[C(\phi) : G_\infty(\phi)] < \infty$.
\end{conjecture}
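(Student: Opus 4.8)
The plan is to reduce Conjecture~\ref{famconj} to the stability assertion that every iterate $\phi^n(x)$ has irreducible numerator over $\Q$, and then to attack that assertion; the places where the attack breaks down are exactly what keeps the conjecture open. For the reduction, recall that $0$ is strictly pre-periodic under $\phi$ --- indeed $\phi(0)=\infty$, a fixed point --- so this family sits in the regime of Theorem~\ref{quadfin} rather than that of Theorem~\ref{abc}: by the rational-function version of Theorem~\ref{quadfin} developed in \cite{galrat} (see \cite[Theorem 5.3]{galrat} and the remark following, which rests on the discriminant formula \cite[Theorem 3.2]{galrat} and the criterion \cite[Corollary 3.8]{galrat}), once all iterates of $\phi$ have irreducible numerators one gets $[C(\phi):G_\infty(\phi)]<\infty$. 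Concretely there is a finite set $R=R(b)$ of ``bad'' primes --- those dividing the numerator or denominator of $b$, those of bad reduction of $\phi$, and $2$ --- outside of which a prime divides the numerator of at most one term of the critical orbit of $\phi$, and Siegel's theorem applied to the finitely many curves $r\,y^2=\phi^2(x)$ ($r$ a squarefree product of primes of $R$) then shows the primitive-prime hypothesis of \cite[Corollary 3.8]{galrat} holds for all but finitely many $n$. Since the numerator of $\phi$ is $b(x^2+1)$, irreducible over $\Q$ for every $b\neq 0$, only $n\geq 2$ is at issue, and $R$ already absorbs the denominator of $b$, so one may work with arbitrary $b\in\Q$.

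By the stability criterion for rational functions (the analogue of Theorem~\ref{stab}; cf.\ the discussion around \cite[Corollary 3.8]{galrat}), $\phi$ is stable precisely when its numerator is irreducible --- automatic --- and no term of the critical orbit of $\phi$ is, up to a factor supported on $R$, plus or minus a perfect square. Because $\phi(-x)=-\phi(x)$ and the critical points $\pm 1$ satisfy $\phi(\pm 1)=\pm 2b$, the critical orbit is $\{\pm\phi^n(1):n\geq 1\}$: as in \cite{galrat}, there is essentially a single arithmetic sequence to control, namely $(c_n)_{n\geq 1}$ with $c_n$ the suitably normalized numerator of $\phi^n(1)$. What must be proved is therefore: for every $b\in\Q\setminus\{0,\pm 1/2\}$ and every $n\geq 1$, $c_n$ is not plus or minus a perfect square times a product of primes in $R(b)$.

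To produce such a proof one has two tools. The first is reduction modulo a small prime $\ell$: when $b$ lies in a residue class mod $\ell$ for which $c_n\bmod\ell$ is never a square, one is done, and this is how the classes $b\equiv 2,3\pmod 5$ and $b\equiv 1,2,5,6\pmod 7$ in the companion to Theorem~\ref{gallstone} are dispatched. One would like either to cover $\Q$ by such good classes over many primes --- impossible by the Chinese Remainder Theorem --- or to find a single descent or congruence argument valid uniformly in $b$, in the spirit of what \cite{quaddiv} does for the families $x^2-kx+k$ and $x^2+kx-1$; no such uniform argument is presently known here. The second tool is a primitive prime divisor theorem for the critical orbit: one can show (as in the remark after Theorem~\ref{criterion} and \cite[Theorem 6.1]{quaddiv}) that $c_n$ has a primitive prime divisor for infinitely many $n$, but not that it occurs to odd multiplicity, and it is exactly the parity that one needs; the ABC conjecture (as in \cite{abc}) would give primitive prime divisors of multiplicity $1$ for all but finitely many $n$, so granting an effective form of ABC, stability --- hence finite index --- becomes a finite computation for each $b$ for which it actually holds.

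The more fundamental difficulty is that stability genuinely \emph{fails} for some $b\in\Q\setminus\{0,\pm 1/2\}$, so that the reduction above simply does not apply: the numerator of $\phi^2(x)$ is $b^2x^4+(2b^2+1)x^2+b^2$, which is reducible over $\Q$ exactly when $4b^2+1$ is a square --- for instance $b=-3/8$, where it is a constant times $(9x^2+1)(x^2+9)$. For such $b$ one is in the same impasse as for $x^2-4x+4$ and the golden-mean polynomial $x^2-x-1$ in Section~\ref{tour}: even after computing that each irreducible factor of the numerator of $\phi^n(x)$ has the expected (large) Galois group, one must control the intersections of the associated splitting fields --- the ``intersection of splitting fields'' problem flagged after Theorem~\ref{quadfin} --- and no method for this is available. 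This is the step I expect to be the main obstacle. Short of resolving it, the realistic deliverable is finite index for all $b$ with irreducible iterates (unconditionally for the congruence classes above, and, granting effective ABC, for all such $b$), leaving the exceptional $b$ --- those for which some $\phi^n(x)$ has a reducible numerator --- still open, which is why Conjecture~\ref{famconj} is recorded only as a conjecture.
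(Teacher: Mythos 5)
The statement you were asked to prove is a \emph{conjecture} (Conjecture~\ref{famconj}); the paper offers no proof of it, and your write-up, quite properly, does not actually supply one either. What you have produced is an accurate reconstruction of the surrounding state of the art: the reduction of finite index in $C(\phi)$ to the irreducibility of the numerators of all $\phi^n(x)$ is exactly what the paper records (``one finds that $[C(\phi):G_\infty(\phi)]<\infty$ follows from the seemingly much weaker assertion that the numerators of $\phi^n(x)$ are irreducible for all $n\geq 1$''), the role of the strictly pre-periodic orbit $0\mapsto\infty$ and of the single (up to $x\mapsto-x$) critical orbit matches the paper's discussion, and your computation that the numerator of $\phi^2(x)$ is $b^2x^4+(2b^2+1)x^2+b^2$, reducible precisely when $4b^2+1$ is a square (e.g.\ $b=-3/8$), is correct and correctly explains why the unconditional results in the paper are stated only for integral $b$ (for $b\in\Z\setminus\{0\}$ the equation $4b^2+1=m^2$ forces $b=0$).

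The gap, then, is simply that the conjecture remains a conjecture, and you have located the two genuinely missing ingredients in the same places the paper does: (i) for $b$ with all iterates irreducible, one needs a primitive prime divisor of the critical-orbit numerators appearing to \emph{odd} multiplicity for the relevant $n$, and neither the infinitude of primitive prime divisors nor the infinitude of odd-multiplicity primes can currently be combined, except conditionally on ABC or after reduction modulo favorable small primes; and (ii) for the genuinely non-stable $b$ (your $4b^2+1=\square$ family), one runs into the splitting-field intersection problem flagged after Theorem~\ref{quadfin}, for which no technique exists. Be careful on one small point: since $\phi(-x)=-\phi(x)$, the two critical orbits are exact negatives of one another, so the ``product over critical points'' quantity from \cite[Corollary 3.8]{galrat} degenerates to minus a square of a numerator; the correct criterion for this family is the symmetrized one of \cite[Theorem 4.5 and Section 5]{galrat}, which tracks a single orbit, as you implicitly assume. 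With that caveat, your proposal is a faithful account of why the conjecture is believed and why it is open, not a proof of it.
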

Our restriction to the family in \eqref{family} is not as significant as it may seem, as every degree 2 rational function that commutes with a non-trivial M\"obius function is conjugate to one of the form \eqref{family} (see \cite[Section 2]{galrat}). Indeed, Conjecture \ref{famconj} is equivalent to the $K = \Q$ case of the following conjecture:
\begin{conjecture}[\cite{galrat}]
Let $K$ be a global field of characteristic $0$ or $> 2$, and suppose $\phi(x) \in K(x)$ has degree 2.  Assume that $\phi$ is not post-critically finite and $0$ is not periodic under $\phi$.  If $\phi$ commutes with a non-trivial M\"obius transformation that fixes $0$, then $[C(\phi) : G_\infty(\phi)] < \infty$.
\end{conjecture}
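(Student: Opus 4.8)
The plan is to reduce to an explicit one-parameter family and then transport the ramification-theoretic method behind Theorem \ref{quadfin} to the rational-function setting. First I would invoke the fact recalled just before Conjecture \ref{famconj}: after conjugating over $K$ by a M\"obius transformation (which changes neither $G_\infty(\phi)$ nor $C(\phi)$, up to the obvious identification, hence not their index) we may assume $\phi$ lies in the family \eqref{family}, $\phi(x) = \frac{b(x^2+1)}{x}$. The standing hypotheses then force $b \notin \{0, \pm\tfrac{1}{2}\}$, since those three values give post-critically finite maps, and we land exactly in the configuration analyzed in \cite[Section 4]{galrat}: the critical points are $\pm1$, the map $\phi$ is odd so that $\phi^n(-1) = -\phi^n(1)$ for all $n$ (the two critical orbits are interchanged by $x \mapsto -x$), $0$ is strictly pre-periodic, being sent to the fixed point $\infty$, the group $A(\phi)$ is generated by the action of $x \mapsto -x$, and $C(\phi)$ is its centralizer in $\Aut(T_\infty)$ --- a subgroup of Hausdorff dimension $1/2$ that nonetheless contains an index-$2$ copy of $\Aut(T_\infty)$, so that in particular $C(\phi)$ is the natural over-group appearing in \eqref{centralizer}.

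Writing $K_n = K(\phi^{-n}(0))$ and $H_n = \mathrm{Gal}(K_n/K_{n-1})$, the injection \eqref{centralizer} shows that it suffices to prove $H_n = C(\phi) \cap \ker(\Aut(T_n) \to \Aut(T_{n-1}))$ --- the appropriate notion of $H_n$ being ``maximal'' relative to the over-group $C(\phi)$ --- for all but finitely many $n$. The central tool is the rational-function analogue of Theorem \ref{criterion} proved in \cite[Theorem 3.7 and Corollary 3.8]{galrat}: assuming the numerator of $\phi^{n-1}(x)$ is irreducible, $H_n$ is maximal as soon as the numerator of $\phi^n(1)$ --- which is the quantity playing here the role that $f^n(c)$ plays in Theorem \ref{criterion}, the apparent dependence on the second critical value $\phi^n(-1)$ collapsing because $\phi$ is odd --- acquires at stage $n$ a prime divisor, occurring to odd multiplicity, that is new in the sense of not dividing the numerator of $\phi^i(1)$ for $i < n$ and of lying outside a fixed finite set $T$ (the primes dividing $2$, $b$, and the relevant resultants). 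Crucially, because $0$ maps directly to the fixed point $\infty$, the set of primes dividing numerators along the orbit of $0$ is empty, so every primitive prime divisor of the critical-value sequence that lies outside $T$ is automatically new; compare the explicit description of the exceptional set $S$ in Theorem \ref{gallstone}. The problem is thereby reduced to a statement purely about the integer (or $S$-integer) sequence of numerators of $\phi^n(1)$: for all but finitely many $n$ it should have a prime factor outside $T$ occurring to odd multiplicity.

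For a \emph{fixed} value of $b$ this can be extracted as in the proof of Theorem \ref{quadfin}. If the numerator of $\phi^n(1)$ were $r$ times a square with $r$ a squarefree product of primes from the finite set $T$, then $x = \phi^{n-2}(1)$ would be an $S$-integral point on the affine curve $r y^2 = (\text{numerator of } \phi^2(x))$, and Siegel's theorem bounds the number of such points for each of the finitely many possible $r$. Hence for fixed $b$ only finitely many $n$ can fail, and combined with the criterion of the previous paragraph this would give $H_n$ maximal for all but finitely many $n$, hence $[C(\phi) : G_\infty(\phi)] < \infty$ --- \emph{provided} the numerator of $\phi^n(x)$ is irreducible for every $n$.

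That irreducibility hypothesis is where I expect the real difficulty to lie, and it is precisely the obstruction that leaves the analogous polynomial statements (Theorem \ref{abc} is only conditional, and the $x^2-x-1$ and $x^2-4x+4$ cases of Section \ref{tour} remain open) unresolved. Transitivity of $G_{n-1}$ on the roots, which is what forces the $G_{n-1}$-invariant vector in \cite[Lemma 1.6]{stoll} to be $(1,\dots,1)$, needs the numerator of $\phi^{n-1}(x)$ to be irreducible; to secure this for all $n$ one would run an eventual-stability argument of the type of Theorem \ref{stab}, reducing to the assertions that the numerator $b(x^2+1)$ of $\phi$ is irreducible over $K$ --- automatic over $\Q$, but over a general $K$ in which $-1$ is a square it factors and the level-one preimages of $0$ are already $K$-rational, a degenerate sub-case needing separate treatment --- and that the critical-value orbit (the numerators of $\phi^n(1)$) contains no squares. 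This last assertion is an ABC-type statement, the exact dynamical counterpart of what is proved conditionally in \cite{abc}, and is not known unconditionally for all $b$; and for a sporadic $b$ where it fails, one faces the hard problem, flagged in Section \ref{tour}, of controlling the mutual intersection of the splitting fields of the irreducible factors of the numerator of $\phi^{n-1}(x)$. So the crux is: prove, for every $b \in K \setminus \{0, \pm\tfrac{1}{2}\}$, that all iterate numerators of $\phi$ are irreducible --- or, failing that, that their number of irreducible factors stays bounded and the corresponding splitting fields are as independent as the tree structure permits; the Galois-theoretic step and the Siegel-type step above would then finish the argument with comparatively little extra work.
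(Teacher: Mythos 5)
The statement you are trying to prove is labeled a \emph{conjecture} in both this survey and in \cite{galrat}, and the paper gives no proof of it --- so there is nothing to compare your argument against. Indeed, the discussion surrounding Conjecture \ref{famconj} and Theorem \ref{gallstone} makes clear that only partial results (certain congruence classes of $b$, and $|b|\leq 10{,}000$) are known. Your write-up is best read not as a proof but as a reduction of the conjecture to an open irreducibility statement, and to your credit you say so explicitly: after invoking the conjugation to the family \eqref{family}, the analogue of Theorem \ref{criterion} from \cite[Corollary 3.8]{galrat}, and a Siegel-type finiteness argument, everything hinges on knowing that the numerators of $\phi^n(x)$ are irreducible for all $n$ (or at least that $\phi$ is eventually stable with suitably independent splitting fields). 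That is precisely the obstruction the paper itself identifies in the sentence ``$[C(\phi):G_\infty(\phi)]<\infty$ follows from the seemingly much weaker assertion that the numerators of $\phi^n(x)$ are irreducible for all $n\geq 1$,'' and it is open in general. So your ``proof'' reproduces the known reduction but does not close the gap; as written it establishes nothing beyond what the paper already records around Theorem \ref{gallstone} and Conjecture \ref{famconj}.

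One secondary point worth flagging: your Siegel step, as written, does not quite go through for a rational function. In the polynomial case (Theorem \ref{quadfin}) the point $x=f^{n-2}(c)$ is an $S$-integer, so ``$r y^2 = f^2(x)$ has finitely many $S$-integral points'' applies directly. Here $\phi^{n-2}(1)$ is a rational number whose denominator grows, so it is not an $S$-integral point on the affine curve $r y^2 = \mathrm{num}(\phi^2(x))$. To make a Siegel/Thue--Mahler argument work one must homogenize: write $\phi^{n-2}(1)=u/v$ in lowest terms and study the equation $F(u,v)=r w^2$ for the degree-$4$ binary form $F$ coming from the numerator of $\phi^2$, being careful about the primes allowed to divide $v$ and $w$. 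This is what the machinery of \cite[Section 5]{galrat} is designed to handle, and it is nontrivial bookkeeping rather than a one-line application of Siegel. None of this removes the fundamental gap, but it means even the conditional part of your reduction (``for a fixed $b$, only finitely many $n$ fail, \emph{provided} irreducibility'') needs more care than the proposal gives.
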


\subsection{A conjecture for quadratic rational functions}

In light of the results on quadratic rational functions given in the previous four subsections we pose the following conjecture:
\begin{conjecture} \label{quadratic}
Let $K$ be a global field and suppose that $\phi \in K(x)$ has degree two. Then $[\Aut(T_\infty) : G_\infty(\phi)] = \infty$ if and only if one of the following holds:
\begin{enumerate}
\item The map $\phi$ is post-critically finite.
\item The two critical points $\gamma_1$ and $\gamma_2$ of $\phi$ have a relation of the form $\phi^{r+1}(\gamma_1) = \phi^{r+1}(\gamma_2)$ for some $r \geq 1$.
\item The root $0$ of $T_\infty$ is periodic under $\phi$.
\item There is a non-trivial M\"obius transformation that commutes with $\phi$ and fixes $0$. 
\end{enumerate}
\end{conjecture}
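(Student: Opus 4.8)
\textbf{A strategy toward Conjecture \ref{quadratic}.} The plan is to treat the two implications separately, the reverse one being where essentially all the difficulty lies. For the forward direction I would simply assemble the results already surveyed. If $\phi$ is post-critically finite, Theorem \ref{infindex} gives infinite index, at least when the characteristic of $K$ is not $2$; the characteristic-two case (where a degree-two map may be inseparable, or have a single critical point) I would treat by hand. If the critical points satisfy $\phi^{r+1}(\gamma_1) = \phi^{r+1}(\gamma_2)$, Pink's computation that $G^{\text{arith}}$ has Hausdorff dimension $1 - 2^{-r}$ in $\Aut(T_\infty(t))$ forces $G_\infty(\phi)$ to have Hausdorff dimension $<1$, hence infinite index. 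If $0$ is periodic, the orbit--stabilizer argument of Section \ref{pdc} embeds $G_\infty(\phi)$ in $\Stab_\infty$, which has infinite index. If a non-trivial M\"obius map commutes with $\phi$ and fixes $0$, the embedding $G_\infty(\phi) \hookrightarrow C(\phi)$ together with the computation in \cite{galrat} that $C(\phi)$ has Hausdorff dimension $1/2$ for degree-two $\phi$ again gives infinite index.

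For the reverse direction --- assuming (1)--(4) all fail, prove $[\Aut(T_\infty) : G_\infty(\phi)] < \infty$ --- I would first reduce to the \emph{generic} configuration: $\phi$ has two critical points $c_1, c_2$ (possibly a Galois-conjugate pair), each with infinite orbit and with no relation as in (2); $0$ is wandering or strictly pre-periodic; and no non-trivial M\"obius map commutes with $\phi$ fixing $0$. In each such case the plan is to run the rational-function version of the ramification/Kummer-theory method of Section \ref{methods} and \cite{galrat}: $K_n/K_{n-1}$ is a $2$-Kummer extension, $H_n$ is maximal as soon as a prime ramifies in $K_n$ that did not ramify in $K_{n-1}$, and the discriminant formulas of \cite{HC} and \cite[Corollary 3.8]{galrat} identify the candidates for such a prime as the primes dividing the numerator of $\phi^n(c_1)\phi^n(c_2)$ (outside a finite set fixed in advance). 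So the target reduces to: for all but finitely many $n$, the numerator of $\phi^n(c_1)\phi^n(c_2)$ has a primitive prime divisor occurring to odd multiplicity --- together with the separate input that the numerators of $\phi^n$ are irreducible (or at least have boundedly many factors, with control of the intersections of their splitting fields). The polynomial subcase is then precisely the content of Theorems \ref{stollthm}, \ref{quadfin}, and \ref{abc}, and the family $b(x^2+1)/x$ is handled in \cite{galrat}.

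The hard part will be exactly these last two inputs, and I do not expect to obtain them unconditionally. The Galois step forcing the relation module to contain $(1, \ldots, 1)$ uses transitivity of $G_{n-1}$, hence irreducibility of the $n$th iterate's numerator; proving this via a critical-orbit criterion in the spirit of Section \ref{stability} --- that the numerators of $\phi^n(c_i)$ avoid being units times squares --- is itself a Diophantine problem that is currently tractable only in the favorable rigid-divisibility or Siegel-theorem situations used in Theorems \ref{stollthm}, \ref{quadfin} and in \cite{galrat}; with merely boundedly many factors, the splitting-field-intersection problem flagged in Section \ref{tour} remains open. Likewise, producing a primitive prime divisor to odd multiplicity for all large $n$ seems to require a primitive-prime-divisor theorem for the sequence of numerators of $\phi^n(c_1)\phi^n(c_2)$: unconditionally one obtains only infinitely many such $n$, or primitive primes whose first appearance may be to even multiplicity, and the full statement appears to need the ABC conjecture in the style of \cite[Theorem 1.2]{abc} (which also rules out the arithmetic interaction between the two critical orbits that obstructs a naive argument).

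Accordingly, the realistic deliverable is the degree-two rational-function analogue of Theorem \ref{abc}: assuming ABC and the failure of (1)--(4), show that the numerators of $\phi^n$ are eventually irreducible and that the numerator of $\phi^n(c_1)\phi^n(c_2)$ has a primitive prime divisor to multiplicity one for all large $n$, and conclude $[\Aut(T_\infty) : G_\infty(\phi)] < \infty$ via the criterion of Theorem \ref{criterion} in its rational form. The unconditional conjecture would then remain open pending progress on irreducibility of iterates of rational maps and on primitive prime divisors.
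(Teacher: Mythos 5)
Your assembly of the ``if'' direction from Theorem \ref{infindex}, Pink's Hausdorff-dimension computation in the case of a critical relation, and the embeddings into $\Stab_\infty$ and $C(\phi)$ is exactly the justification the paper gives for that half, and your assessment of the ``only if'' direction --- that it reduces to irreducibility of iterates and to primitive prime divisors of odd multiplicity in the critical-orbit numerators, realistically only reachable under ABC in the style of Theorem \ref{abc} --- matches the paper's own rationale and its explicit statement that this half remains wide open. This is essentially the same approach; nothing to add.
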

Our rationale for this conjecture is as follows. Thanks to the result of \cite{pink3} given in \eqref{full}, any quadratic rational map not satisfying condition (1) or (2) of the conjecture must satisfy $G^{\text{arith}} = \Aut(T_\infty(t))$. Hence these are the only quadratic maps for which there may be a geometric reason that $[\Aut(T_\infty) : G_\infty(\phi)] = \infty$. Among quadratic maps with $G^{\text{arith}} = \Aut(T_\infty(t))$, the only known examples where $[\Aut(T_\infty) : G_\infty(\phi)] = \infty$ are those satisfying conditions (3) and (4). The meat of the conjecture is that these are all such examples. 

We remark that if $\phi$ satisfies one of the four conditions of Conjecture \ref{quadratic}, then indeed $[\Aut(T_\infty) : G_\infty(\phi)] = \infty$. This is thanks to Theorem \ref{infindex}, results of R. Pink \cite[Theorem 4.8.1(b) and Corollary 4.8.9]{pink3}, and the fact that $[\Aut(T_\infty) : \Stab_\infty] = [\Aut(T_\infty) : C(\phi)] = \infty$, where $\Stab_\infty$ is defined in Section \ref{overlap} and $C(\phi)$ is the centralizer in $\Aut(T_\infty)$ of the action of $x \mapsto -x$ on $T_\infty$. 
The ``only if" part of Conjecture \ref{quadratic} remains wide open. 

\section{Density results} \label{density}

Let us return now to the study of the density of prime divisors of orbits of rational functions, which motivated the initial investigations into arboreal representations. 
We show in this section that, happily, one may obtain zero-density results with a significantly weaker hypothesis than $G_\infty(\phi)$ having finite index in a known subgroup of $\Aut(T_\infty)$. 

For a general global field $K$, we have two notions of density available for a set $S$ of primes of $K$: \label{densedef}
$$ \lim_{s \rightarrow 1^{+}} \frac{\sum_{{\mathfrak{q}} \in S} N({\mathfrak{q}})^{-s}} {\sum_{{\mathfrak{q}}}N({\mathfrak{q}})^{-s}} \qquad \text{and} \qquad 
\limsup_{x \to \infty} \frac{\#\{\mathfrak{q} \in S : N({\mathfrak{q}}) \leq x\}}{\#\{{\mathfrak{q}} : N({\mathfrak{q}}) \leq x\}},$$
where $N({\mathfrak{q}}) = \#({\mathcal{O}}_K/{\mathfrak{q}}{\mathcal{O}}_K)$, and the sum in each denominator runs over all primes of $K$. 
The quantity on the left is called \textit{Dirichlet density}, while that on the right is \textit{natural density}. When the natural density of $S$ exists, then so does its Dirichlet density, and the two coincide. Moreover, there are sets for which the Dirichlet density exists but the natural density does not. Natural density earns its name because it corresponds more closely to the intuitive notion of the limiting probability as $x \to \infty$ that a randomly chosen prime $\leq x$ belongs to $S$. Because of the differences in the versions of the Chebotarev density theorem that hold over function fields and number fields (see \cite[p.125]{Rosen} for the former and \cite[p. 368]{narkiewicz} for the latter), we use Dirichlet density in the function field setting and natural density in the number field setting. From now on, this is what we mean by ``the density" of a set of primes. 

Let $K$ be a global field, $\phi \in K(x)$, and $a_0 \in K$. Let $v_\p$ denote the $\p$-adic valuation for a prime $\p$ of $K$, and define 
$$P_\phi(a_0) := \{\text{$\p$} : \text{$v_\p(\phi^i(a_0)) > 0$ for at least one $i \geq 0$ with $\phi^i(a_0) \neq 0$}\}.$$
We denote $v_\p(\phi^i(a_0)) > 0$ by $\p \mid \phi^i(a_0)$.  As noted in the discussion on p. \pageref{frobdisc}, when $\phi(x)$ is a polynomial, the density of the complement of $P_\phi(a_0)$ is bounded below by the density of $\p$ such that $\phi^n(x) \equiv 0 \bmod{\p}$ has no solution. \label{bound} For if $\p$ satisfies this condition, then we cannot have $\p \mid \phi^j(a_0)$ for $j \geq n$, since otherwise 
$\phi^n(x) \equiv 0 \bmod{\p}$ has a solution with $x = \phi^{j-n}(a_0)$. However, only finitely many $\p$ satisfy $\p \mid \phi^j(a_0)$ for $0 \leq j < n$.  A similar conclusion holds when $\phi$ is a rational function, but one must require $\phi^n(\infty) \neq 0$ and discard the finitely many $\p$ dividing $\phi^n(\infty)$ and where $\phi$ has bad reduction. See \cite[Theorem 6.1]{galrat} for details. 

Now $\phi^n(x) \equiv 0 \bmod{\p}$ having no solution is equivalent to Frobenius at $\p$ acting without fixed points on the elements of $\phi^{-n}(0)$. One then gets from the Chebotarev density theorem that the density  of $P_\phi(a_0)$ is bounded above by the proportion of elements of 
$G_n(\phi)$ that act on $\phi^{-n}(0)$ with at least one fixed point. This holds for any $n$, and it follows that $P_\phi(a_0)$ has density zero provided that
\begin{equation} \label{thekey}
\lim_{n \to \infty} \frac{\# \{g \in G_n(\phi) : \text{$g$ fixes at least one element of $\phi^{-n}(0)$} \}}{\#G_n(\phi)} = 0.
\end{equation}
The sequence in the limit is non-increasing, for if $g \in G_n(\phi)$ acts on $\phi^{-n}(0)$ without fixed points, then the same is true of all $g' \in G_{n+1}(\phi)$ that restrict to $g$. Therefore the limit in \eqref{thekey} exists. 

In the relatively rare cases where $G_n(\phi)$ is known explicitly for all $n \geq 1$, the limit in \eqref{thekey} can be calculated directly. The following result combines \cite[Theorem 6.2]{galrat} and \cite[Propositions 5.5, 5.6]{galmart}.
\begin{theorem}
Let $K$ be a global field, let $\phi(x) \in K(x)$, and let $C(\phi)$ be defined as in the discussion following \eqref{family}. If $G_\infty(\phi) = \Aut(T_\infty)$ or $G_\infty(\phi) = C(\phi)$, then the density of $P_\phi(a_0)$ is zero for all $a_0 \in K$. 
\end{theorem}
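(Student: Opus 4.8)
The plan is to verify the limit in \eqref{thekey} in each of the two cases, reducing both to a calculation of the limiting proportion of fixed-point-free elements in a known profinite group acting on the boundary of a regular rooted tree, and then invoking the Chebotarev-density argument summarized in the paragraph preceding \eqref{thekey} (with the rational-function caveat noted there). So the whole proof is: (i) recall that it suffices to establish \eqref{thekey}; (ii) compute the limit for $G_\infty(\phi) = \Aut(T_\infty)$; (iii) compute it for $G_\infty(\phi) = C(\phi)$; (iv) conclude.

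For step (ii) I would argue probabilistically, following the branching-process viewpoint alluded to in the introduction (Odoni's calculation, \cite[Proposition 5.5]{galmart}). A uniformly random $g \in \Aut(T_n)$ fixes at least one vertex at level $n$ iff the associated critical Galton--Watson-type process, where a fixed vertex at level $i$ has among its $d$ children a number of fixed vertices distributed as the number of fixed points of a uniform random element of $S_d$, survives to generation $n$. The offspring distribution has mean $1$ (a uniform permutation of $d$ letters has exactly one fixed point on average) and is non-degenerate for $d \geq 2$, so the process is critical and dies out almost surely; hence the survival probability to generation $n$ tends to $0$, which is exactly \eqref{thekey} for $G_\infty(\phi)=\Aut(T_\infty)$. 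One must be slightly careful to phrase ``fixes at least one element of $\phi^{-n}(0)$'' as survival of the level-$n$ population, and to note that the independence across branches that makes this a genuine branching process is precisely the wreath-product structure $\Aut(T_n) \cong S_d^{(n)}$.

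For step (iii) the group $C(\phi)$ is the centralizer in $\Aut(T_\infty)$ of the order-two automorphism $m$ coming from $x \mapsto -x$; as recorded at p.~\pageref{cphiref}, $C(\phi)$ has Hausdorff dimension $1/2$ and, crucially, contains an index-two subgroup isomorphic to $\Aut(T_\infty)$ (\cite[Proposition 4.1]{galrat}). I would use this structure to set up, again, a branching process description of the fixed-point count of a random element of $C(\phi)$ on level $n$. The involution $m$ pairs up the two subtrees below the root (or below whichever vertex it is acting nontrivially on), so an element of $C(\phi)$ is determined by its behavior on ``half'' the tree together with a gluing datum; the upshot is that the fixed-vertex process is still a non-degenerate critical branching process — the mean offspring number is again $1$ — so it dies out almost surely and \eqref{thekey} holds. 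This is in essence the content of \cite[Theorem 6.2]{galrat} and \cite[Proposition 5.6]{galmart}, so I would cite the relevant offspring-distribution computation rather than redo it, and simply check that the mean is $1$ and the variance finite, which gives extinction by the standard criticality criterion.

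The main obstacle is step (iii): unlike $\Aut(T_\infty)$, the group $C(\phi)$ does not split as a clean iterated wreath product, so identifying the correct offspring distribution of the fixed-point process — and checking it is genuinely critical rather than subcritical or (fatally) supercritical — requires the explicit self-similar description of $C(\phi)$ from \cite[Section 4]{galrat}. Once that offspring distribution is in hand the extinction argument is routine (mean $1$, finite variance $\Rightarrow$ extinction), and steps (i), (ii), (iv) are essentially bookkeeping: (i) and (iv) are the Chebotarev reduction already spelled out in the text, and (ii) is Odoni's classical critical-branching computation. I would therefore organize the write-up as a short reduction to \eqref{thekey}, a one-paragraph treatment of the $\Aut(T_\infty)$ case via criticality of the fixed-point branching process, and a one-paragraph treatment of the $C(\phi)$ case citing the offspring-distribution analysis of \cite{galrat, galmart}, concluding that the survival probability to level $n$ vanishes in the limit in both cases.
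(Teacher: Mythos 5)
Your proposal is correct and follows essentially the same route as the paper, which obtains this theorem by combining the Chebotarev reduction to \eqref{thekey} with the branching-process computations of \cite[Propositions 5.5, 5.6]{galmart} and \cite[Theorem 6.2]{galrat} — precisely the critical Galton--Watson extinction argument you describe for $\Aut(T_\infty)$ and the analogous offspring-distribution analysis for $C(\phi)$. The only cosmetic remark is that for a critical non-degenerate branching process extinction is automatic without any finite-variance hypothesis, so that check can be dropped.
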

This establishes zero-density results for orbits of the $\phi$ given in Theorems \ref{stollthm} and \ref{gallstone}, as well as the map in \eqref{special}. A better result will supersede this, however, once we introduce some new ideas that allow for a similar conclusion with vastly less knowledge of $G_\infty(\phi)$. In early 2004, the author was able to establish an important fact about $G_\infty(\phi)$ (see \eqref{essential}) in the setting $K = \F_p(t)$ ($p$ an odd prime) and $\phi(x) = x^2 + t$, but saw no way to translate this into a form that would help prove \eqref{thekey}. However, in a fortuitous conversation after a basketball game, A. Hoffman (then an applied math graduate student at Brown University) suggested that a convergence theorem from probability theory might be just the ticket. The resulting change in viewpoint led to the main theorems of \cite{galmart}, and, not coincidentally, the author's successful completion of graduate school. 

In light of \eqref{thekey}, we wish to measure the probability of a randomly chosen element of $G_n(\phi)$ belonging to the set given in the numerator of the expression in \eqref{thekey}, and more precisely how this probability evolves as $n$ grows. It's useful therefore to associate to a given $g \in G_n(\phi)$ the sequence $X_1(g), X_2(g), \ldots, X_n(g)$, where $X_i(g)$ is the number of elements of $\phi^{-i}(0)$ fixed by $g$ (recall that $g$ acts on $\phi^{-i}(0)$ for $1 \leq i \leq n$ through the restriction map $G_n(\phi) \to G_i(\phi)$). If the limit in \eqref{thekey} is zero, then when $n$ is large almost any choice of $g$ will result in a sequence that has reached zero by the $n$th term. To understand the actual limit as $n$ tends to infinity, we should work in $G_\infty(\phi)$, and use the restriction maps $\pi_i : G_\infty(\phi) \to G_i(\phi)$. Happily, $G_\infty(\phi)$ has a natural probability measure $\P$ given by the normalized Haar measure, with the excellent property that for any $S \subseteq G_i(\phi)$, 
$\P(\pi_i^{-1}(S)) = \#S/\#G_i(\phi)$. We can now translate \eqref{thekey} into 
\begin{equation} \label{probkey}
\lim_{n \to \infty} \P(g \in G_\infty(\phi) : X_n(g) > 0) = 0.
\end{equation} 
To each $g \in G_\infty(\phi)$, we attach the infinite sequence $X_1(g), X_2(g), \ldots$. Note that the $X_i$ are random variables on the probability space $G_\infty(\phi)$, and probabilists are wont to give any infinite sequence of random variables on a fixed probability space the fancy-sounding moniker \textit{stochastic process}. As this process $X_1, X_2, \ldots$ encodes information about the Galois action on $T_\infty$, we call it the \textit{Galois process} of $\phi$. 

This rephrasing of our group theory problem in probabilistic terms has value in that it allows us to use the considerable machinery of the theory of stochastic processes. Because $\lim_{n \to \infty} \P(S_n) = \P(\bigcap S_n)$ for any nested sequence of sets $S_n$, \eqref{probkey} is equivalent to the statement that almost all sequences $X_1(g), X_2(g), \ldots$ are eventually zero. 
To prove this, we use two steps:
\begin{enumerate} \label{twosteps}
\item[(A)] Show that almost all sequences $X_1(g), X_2(g), \ldots$ are eventually constant. 
\item[(B)] Show that if $r > 0$, then for infinitely many $n \geq 1$ we have $$\P(X_{n}(g) = r \mid X_{n-1}(g) = r) \leq 1 - \epsilon,$$ where $\epsilon > 0$ is independent of $n$. 
\end{enumerate}
Condition (B) ensures that the probability of $X_1(g), X_2(g), \ldots$ being eventually constant at a fixed $r > 0$ is zero, and the desired conclusion follows. While step (A) may not seem the most obvious way to proceed, it fits nicely with the notion of convergence of a stochastic process: the process $X_1, X_2, \ldots$ converges if there exists a random variable $X : G_\infty \to \mathbb{R}$ such that $X_n \to X$ almost surely, or in other words,
$$\P(g \in G_\infty(\phi) : \text{$\lim_{n \to \infty} X_n(g)$ exists}) = 1.$$
Because the $X_n$ are integer-valued, this implies that the sequence $X_1(g), X_2(g), \ldots$ is eventually constant with probability one, just as in (A) above. 

But how to show the Galois process converges? It is here that we call on substantial ideas from probability theory, which has a plethora of results giving sufficient conditions for a stochastic process to converge. One kind of process for which powerful convergence theorems exist is called a \textit{martingale}, which roughly is a ``locally fair" process in that the expected behavior one step into the future, given a certain present behavior, is always the same as the present behavior. More precisely, for all $n \geq 2$ and any $t_i \in \R$, 
\begin{equation} \label{mart}
E(X_n \mid X_{1} = t_{1}, X_2 = t_2, \ldots, X_{n-1} = t_{n-1}) = t_{n-1},
\end{equation}
provided $\P(X_{1} = t_{1}, X_2 = t_2, \ldots, X_{n-1} = t_{n-1}) > 0$. Martingales often converge; in particular \cite[Section 12.3]{grimmett} gives the highly useful result that if the random variables of a martingale take non-negative values, then the martingale converges. Certainly in the present case $X_n(g) \geq 0$ for all $g \in G_\infty(\phi)$. To sum up, then, we may accomplish step (A) above simply by showing that the Galois process is a martingale. 

To establish \eqref{mart} for the Galois process involves examining all lifts to $G_{n}(\phi)$ of a given $g_0 \in G_{n-1}(\phi)$. Indeed, conditioning on the behavior $X_{1} = t_{1}, X_2 = t_2, \ldots, X_{n-1} = t_{n-1}$ is the same as restricting consideration to a certain subset $S$ of $G_{n-1}(\phi)$, and then looking at the expected value of $X_n(g)$ as $g \in G_{n}(\phi)$ varies over elements restricting to $S$. If we can show that the expected value of $X_n(g)$ is $t_{n-1}$ for lifts of each $g_0 \in S$ individually, then \eqref{mart} immediately follows. 

Now the set of all lifts to $G_{n}(\phi)$ of $g_0 \in G_{n-1}(\phi)$ is just the coset $gH_n$, where $g$ is any lift of $g_0$ and 
$$H_n = \{h \in G_n(\phi) : \text{$h$ restricts to the identity on $G_{n-1}(\phi)$}\}.$$ 
If we let $K_n = K(\phi^{-n}(0))$, then $H_n$ is the Galois group of the relative extension $K_n/K_{n-1}$. Because we are conditioning on 
$X_{1} = t_{1}, X_2 = t_2, \ldots, X_{n-1} = t_{n-1}$, we may assume that $g_0$ has $t_{n-1}$ fixed points in $\phi^{-(n-1)}(0)$. Let $\alpha$ be one such fixed point, and note that to establish \eqref{mart} it is enough to show that on average an element of $gH_n$ fixes one point in $\phi^{-1}(\alpha)$, for then the average total number of fixed points of an element of $gH_n$ acting on $\phi^{-n}(0)$ is $t_{n-1}$. Now if 
\begin{equation} \label{essential}
\text{$H_n$ acts transitively on every set of the form $\phi^{-1}(\alpha)$}, 
\end{equation}
then an application of Burnside's lemma gives the desired result. In the case where $\phi$ is a polynomial, \eqref{essential} is equivalent to $\phi(x) - \alpha$ being irreducible over $K_{n-1}$ for each root $\alpha$ of $\phi^{n-1}(x)$. See \cite[Theorem 2.5]{quaddiv} for a slightly more general version of this statement. 

Establishing \eqref{essential} is difficult in general, but turns out to be tractable in many circumstances. In the geometric setting considered in \cite{fpfree}, it is an easy result (see the remarks following Theorem 5.1 of \cite{fpfree}). 
The first result in an arithmetic setting appeared in \cite[Theorem 1.2]{galmart}, in the case where $\phi$ is a quadratic polynomial over a field of characteristic $\neq 2$ satisfying a hypothesis that essentially says the critical orbit of $\phi$ contains no squares. A mild generalization, allowing roughly for a finite number of squares to occur in the critical orbit of $\phi$, appeared in \cite[Theorem 2.7]{quaddiv}. In particular this result implies that if $\phi$ is quadratic with all iterates irreducible, then \eqref{essential} holds for sufficiently large $n$, and this is enough to establish (A). A more significant generalization to certain polynomials of the form $x^p + b$, where $p$ is prime, has recently been given in \cite[Theorem 3.4]{zdc}, under the hypothesis that the ground field $K$ contains a primitive $p$th root of unity. The proofs of all these theorems rely on a careful study of permutation groups with certain properties. A different approach is taken in \cite[Theorem 3.3]{zdc}, where a much more straightforward local argument suffices to prove \eqref{essential} for $\phi(x) = x^d + b$ under the slightly more restrictive hypothesis that $v_\p(b) > 0$ for some prime $\p$ of $K$ of residue characteristic not dividing $d$, but with the great added advantage of holding for composite $d$. Again, $K$ is assumed to contain a primitive $d$th root of unity. 

We turn now to proving (B), the second step in the two-step program given on p. \pageref{twosteps}. As in step (A), the probability involved is conditioned on the value of $X_{n-1}(g)$, and thus we may restrict consideration to cosets of $H_n$. The key advantage is that \textit{we only need knowledge of $H_n$ for infinitely many $n$}. In many cases it is possible to precisely determine $H_n$ for an infinite set of $n$. This is certainly true when $[\Aut(T_\infty) : G_\infty(\phi)]$ is finite, and while the literature appears to contain no precise proof of (B) in this case, one can be adapted from \cite[Lemma 4.6]{zdc} (see also \cite[Lemma 4.3]{odonigalit}). This gives
\begin{theorem}
Let $K$ be a global field and $\phi \in K(x)$. Suppose that the Galois process for $\phi$ is a martingale and $[\Aut(T_\infty) : G_\infty(\phi)]$ is finite. Then the density of $P_\phi(a_0)$ is zero for all $a_0 \in K$.
\end{theorem}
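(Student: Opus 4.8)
The plan is to run the two-step program of p.~\pageref{twosteps} and then feed its output into the Chebotarev reduction recalled around \eqref{thekey}. Step (A) is immediate from the hypotheses: the Galois process $X_1, X_2, \dots$ is assumed to be a martingale on the probability space $(G_\infty(\phi), \P)$ and its terms are non-negative integers, so by the convergence theorem for non-negative martingales (\cite[Section 12.3]{grimmett}) the sequence $X_n(g)$ converges for $\P$-almost every $g$ and, being integer-valued, is therefore eventually constant for almost every $g$.

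For step (B) I would first use the finite-index hypothesis to pin down $H_n$ for all large $n$. Since $\Aut(T_n)$ and $G_n(\phi)$ are the images of $\Aut(T_\infty)$ and $G_\infty(\phi)$ under restriction, the indices $[\Aut(T_n):G_n(\phi)]$ form a non-decreasing sequence bounded above by $[\Aut(T_\infty):G_\infty(\phi)]$, hence are eventually constant, say for $n \geq N$. Using that $G_{n+1}(\phi)$ surjects onto $G_n(\phi)$ with kernel $H_{n+1} = G_{n+1}(\phi) \cap \ker(\Aut(T_{n+1}) \to \Aut(T_n))$, one has the multiplicativity $[\Aut(T_{n+1}):G_{n+1}(\phi)] = [\Aut(T_n):G_n(\phi)] \cdot [\ker(\Aut(T_{n+1}) \to \Aut(T_n)) : H_{n+1}]$, and this forces $H_n$ to be the \emph{full} kernel $\ker(\Aut(T_n)\to\Aut(T_{n-1})) \cong S_d^{\,d^{\,n-1}}$ for every $n > N$. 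Concretely, for such $n$ the group $H_n$ acts as the full symmetric group on the set of children $\phi^{-1}(\alpha)$ of each vertex $\alpha$ at level $n-1$, independently over distinct $\alpha$.

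Now fix $r \geq 1$ and $n > N$, and condition on $\pi_{n-1}(g) = g_0$ for some $g_0 \in G_{n-1}(\phi)$ with $X_{n-1}(g_0) = r$ (if no such $g_0$ exists for large $n$ there is nothing to prove for that $r$): this makes $g$ range uniformly over a coset of $H_n$, so above each of the $r$ fixed points $\alpha_1, \dots, \alpha_r$ of $g_0$ the permutation $g$ induces on $\phi^{-1}(\alpha_i)$ is uniform on $S_d$, and the $r$ such permutations are independent. Writing $Y_i$ for the number of fixed points of $g$ in $\phi^{-1}(\alpha_i)$, the conditional law of $X_n(g) = Y_1 + \cdots + Y_r$ is that of a sum of $r$ i.i.d.\ copies of the fixed-point count of a uniform element of $S_d$; this does not depend on $g_0$, nor does it change if we condition further on $X_j = r$ for $N < j \leq n-1$, so
\[
\P\bigl(X_n = r \mid X_{n-1} = r\bigr) \;=\; \P\bigl(Y_1 + \cdots + Y_r = r\bigr) =: q_{r,d}.
\]
Because $d \geq 2$ makes $Y_1$ take the values $0$ and $d$ with positive probability, the sum $Y_1 + \cdots + Y_r$ is a non-constant integer random variable, whence $q_{r,d} < 1$; this is condition (B), with $\epsilon := 1 - q_{r,d}$ independent of $n$. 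To combine (A) and (B): for fixed $r \geq 1$ and any $M > N$, iterating the displayed identity gives $\P(X_m = r \text{ for all } M \leq m \leq M') \leq q_{r,d}^{\,M'-M} \to 0$ as $M' \to \infty$, so $\P(X_n \text{ is eventually constant equal to } r) = 0$; summing over the countably many $r \geq 1$ and intersecting with the full-measure event from (A) yields $\P(X_n = 0 \text{ for all large } n) = 1$. Since the events $\{X_n > 0\}$ are nested decreasing (as observed just after \eqref{thekey}), this gives $\lim_{n\to\infty}\P(X_n > 0) = 0$, i.e.\ \eqref{probkey}, equivalently \eqref{thekey}. The zero density of $P_\phi(a_0)$ for all $a_0 \in K$ then follows from the Chebotarev argument recalled before \eqref{thekey} and on p.~\pageref{bound}; for rational, non-polynomial $\phi$ one invokes the version in \cite[Theorem 6.1]{galrat}, discarding the finitely many primes of bad reduction and those dividing $\phi^n(\infty)$.

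I expect the only genuine point to be step (B), and within it the passage from the finite-index hypothesis to the explicit statement that $H_n$ is the full kernel for all large $n$; once that is in hand, the rest is a routine martingale manipulation together with the elementary fact that the number of fixed points of a uniformly random permutation of a set of size $d \geq 2$ is genuinely random. (I would also remark that finite index already makes each $H_n$ transitive on every $\phi^{-1}(\alpha)$ for $n > N$, so the martingale property is automatic beyond level $N$; since convergence of a process is a tail property one could in principle drop that hypothesis, but I would keep the statement as given.)
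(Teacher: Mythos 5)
Your proposal is correct and follows exactly the two-step program (A)/(B) that the paper sketches for this theorem. Step (A) is the standard non-negative-martingale convergence argument, and your step (B) correctly exploits multiplicativity of indices, $[\Aut(T_{n+1}):G_{n+1}(\phi)] = [\Aut(T_n):G_n(\phi)]\cdot[\ker(\Aut(T_{n+1})\to\Aut(T_n)):H_{n+1}]$, together with the boundedness of $[\Aut(T_n):G_n(\phi)]$, to deduce that $H_n$ is the full kernel for all $n>N$; the ensuing conditional computation of $\P(X_n=r\mid X_{n-1}=r)$ as a sum of $r$ i.i.d.\ fixed-point counts of uniform elements of $S_d$ is precisely the kind of argument the paper defers to \cite[Lemma 4.6]{zdc} and \cite[Lemma 4.3]{odonigalit}. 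Your closing observation that the martingale hypothesis is in fact implied by the finite-index hypothesis for $n>N$ (since a full kernel acts transitively on each fiber, giving the martingale identity via Burnside beyond level $N$, and convergence is a tail property) is a correct and worthwhile remark that goes slightly beyond the paper's statement.
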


In many cases, zero-density results are possible under far weaker assumptions than $[\Aut(T_\infty) : G_\infty(\phi)] < \infty$. For instance, when $\phi$ is a quadratic polynomial, one can show under mild hypotheses that Siegel's theorem on integral points implies $H_n \cong (\Z/2\Z)^{2^{n-1}}$ (that is, $H_n$ is as large as possible) for infinitely many $n$. See \cite[Corollary 6.6]{galmart} and \cite[Proof of Theorem 1.1]{quaddiv}. In particular, one may obtain a zero-density result for primes dividing orbits of $\phi(x) = x^2 + 3$, though as mentioned on p. \pageref{3case} it is not known whether $[\Aut(T_{\infty}): G_\infty(\phi)] < \infty$. Another interesting example is that of $\phi(x) = x^2 + t$, with $K = \Fp(t)$ for an odd prime $p$; this is the motivating example of \cite{galmart}. In this case, one can show that $H_n \cong (\Z/2\Z)^{2^{n-1}}$ when $n$ is squarefree \cite[Corollary 6.6]{galmart}, although it remains unknown whether $[\Aut(T_\infty) : G_\infty(\phi)] < \infty$ (see Conjecture 6.7 of \cite{galmart}). This yields a zero-density result for prime divisors of orbits of $\phi$, and in particular for prime divisors of the sequence 
$$ \{t, t^2 + t, t^4 + 2t^3 + t^2 + t, \ldots\},$$
which is the orbit of $0$ under $\phi$ in $\Fp[t]$. This has consequences for the $p$-adic Mandelbrot set, in particular showing that its hyperbolic subset is small in a certain sense (see Theorem 1.4 of \cite{galmart}). 
A further interesting family of examples is given by $\phi(x) = x^d + b$.  For this family, it's shown in \cite[Theorem 4.5]{zdc} that $H_n \cong (\Z/d\Z)^{d^{n-1}}$ for infinitely many $n$, under mild conditions on $b$. This leads to corresponding zero-density results (see Theorem 1.1 of \cite{zdc}, or part (5) of Theorem \ref{sumup} below). 
A crucial caveat in all the results mentioned in this paragraph is that they require that all iterates of $\phi$ be irreducible, pointing up once again the importance of this property. 

As a final note, many of the polynomial results cited in this section are proven for \textit{translated iterates}, that is, polynomials of the form 
$g \circ \phi^n(x)$, where $g(x)$ divides some iterate of $\phi$. This presents only mild complications and allows one to obtain density results 
in the situation where some iterates of $\phi(x)$ are reducible, provided that the number of irreducible factors of $\phi^n(x)$ is bounded as $n$ grows (in the terminology of Section \ref{stability}, $\phi$ is \textit{eventually stable}). For example, this makes possible density results about $\phi(x) = x^2 - 4$, which has the property that for each $n \geq 1$, $\phi^n(x)$ is the product of two irreducible polynomials over $\Q$ (see \cite[Section 4]{quaddiv}). 

We now give a theorem that exemplifies the kind of result made possible by the preceding analysis. Each statement below is a special case of the theorem cited. 



\begin{theorem} \label{sumup} For the following $\phi \in \Q(x)$, $P_\phi(a_0)$ has density zero for all $a_0 \in \Q$:
\begin{enumerate}
\item $\phi(x) = x^2 + kx - k$ for $k\in \Z$ \; \cite[Theorem 1.2]{quaddiv}
\item $\phi(x) = x^2 + kx - 1$ for $k \in \Z \setminus \{0,2\}$ \; \cite[Theorem 1.2]{quaddiv}
\item $\phi(x) = x^2 + k$ for $k \in \Z \setminus \{- 1\}$ \; \cite[Theorem 1.2]{quaddiv}
\item $\phi(x) = \frac{k(x^2 + 1)}{x}$ for odd $k \in \Z$ having no prime factor $\equiv 1 \bmod{4}$ \; \cite[Corollary 5.14, Theorem 6.2]{galrat}
\end{enumerate}
Moreover, if $p$ is an odd prime, $K$ is a number field containing a primitive $p$th root of unity, and 

\begin{enumerate}
\item[(5)] $\phi(x) = x^p + k$ for $k \in \Z$,
\end{enumerate}
then $P_\phi(a_0)$ has density zero for all $a_0 \in K$ \cite[Corollary 1.3]{zdc}.
\end{theorem}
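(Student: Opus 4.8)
The plan is to verify, in each case, the probabilistic criterion \eqref{thekey} (equivalently \eqref{probkey}), which by the discussion on p.~\pageref{bound} forces $P_\phi(a_0)$ to have density zero \emph{simultaneously for every} $a_0 \in \Q$ (resp. $a_0 \in K$ in case~(5)), since the limiting proportion in \eqref{thekey} does not involve $a_0$; for the rational map in case~(4) one additionally discards the finitely many primes dividing $\phi^n(\infty)$ or at which $\phi$ has bad reduction, exactly as in \cite[Theorem 6.1]{galrat} (here $\infty$ is a fixed point of $\phi(x) = k(x^2+1)/x$, so $\phi^n(\infty) = \infty \neq 0$). Following the two-step program on p.~\pageref{twosteps}: in step (A) I would show that the Galois process $X_1, X_2, \ldots$ of $\phi$ is a non-negative martingale, so by the convergence theorem cited there it converges almost surely, hence is eventually constant with probability one; in step (B) I would show that whenever $r > 0$, for infinitely many $n$ one has $\P(X_n = r \mid X_{n-1} = r) \le 1 - \epsilon$ with $\epsilon$ independent of $n$, ruling out a positive eventual constant. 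Together these give $X_n \to 0$ a.s., which is \eqref{probkey}.

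For step (A) the content is the transitivity statement \eqref{essential}: $H_n = \Gal(K_n/K_{n-1})$ acts transitively on each fiber $\phi^{-1}(\alpha)$, $\alpha \in \phi^{-(n-1)}(0)$, which (for polynomials) amounts to $\phi(x) - \alpha$ being irreducible over $K_{n-1}$, and which yields \eqref{mart} by a Burnside count over the cosets of $H_n$. For the quadratic families (1)--(3) this is \cite[Theorem 2.7]{quaddiv}, whose hypothesis holds once the iterates of $\phi$ are irreducible; I would get irreducibility of iterates from Theorem \ref{stab}, i.e. by checking that $\phi$ is irreducible and its critical orbit contains no squares. In family (1), $\phi(x) = x^2 + kx - k$ sends $0$ to the fixed point $-k$, so only a finite set $R$ of primes can divide elements of the critical orbit, and a congruence-plus-Siegel argument as in the proof of Theorem \ref{quadfin} shows the critical orbit contains no squares times $R$-units, for all but finitely many $n$; family (3), $\phi(x) = x^2 + k$, has the rigid-divisibility property of p.~\pageref{3case}, which reduces the no-squares condition to a statement about the pairwise-coprime primitive parts $b_n$; family (2), $\phi(x) = x^2 + kx - 1$, is handled similarly (here $0$ is again pre-periodic), with the genuinely reducible member $k = -1$ (where $h^3$ splits into two quartics) treated via eventual stability and translated iterates $g \circ \phi^n$ with $g$ dividing an iterate of $\phi$. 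The same translated-iterate device covers the reducible member $k = -4$ of family (1). For the rational family (4), $\phi(x) = k(x^2+1)/x$ sends $0$ to the fixed point $\infty$, and the martingale property follows from the rational-function analogue of \eqref{essential} established in \cite{galrat}. For (5), $\phi(x) = x^p + k$ over a field $K \ni \zeta_p$, one obtains \eqref{essential} from the permutation-group analysis of \cite[Theorem 3.4]{zdc} (or the slicker local argument of \cite[Theorem 3.3]{zdc}), again after checking irreducibility of iterates.

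For step (B) one needs explicit control of $H_n$ along an infinite set of $n$. For the quadratic families this is where Siegel's theorem on $S$-integral points enters: under the irreducibility hypotheses already secured, $H_n \cong (\Z/2\Z)^{2^{n-1}}$ is maximal for infinitely many $n$ (\cite[Corollary 6.6]{galmart}, \cite[Proof of Theorem 1.1]{quaddiv}), and for such $n$ one computes $\P(X_n = r \mid X_{n-1} = r)$ directly by a Burnside-type count over the full kernel and sees it is bounded away from $1$ (a calculation adaptable from \cite[Lemma 4.6]{zdc}, cf. \cite[Lemma 4.3]{odonigalit}). For family (4) the corresponding description of $H_n$ for infinitely many $n$ is part of \cite[Corollary 5.14]{galrat}; for family (5) it is \cite[Theorem 4.5]{zdc}, giving $H_n \cong (\Z/p\Z)^{p^{n-1}}$ for infinitely many $n$, whence the bound in (B) follows by the same count.

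The main obstacle is not the probability theory (martingale convergence, the Burnside bounds, the reduction to \eqref{thekey} are all soft) but the arithmetic inputs underpinning both steps: establishing \eqref{essential}, which in the quadratic cases rests on proving all iterates irreducible via Theorem \ref{stab} and hence on ruling out squares in the critical orbit, and establishing that $H_n$ is maximal infinitely often, which for the quadratic families relies on Siegel's theorem and for case~(5) on the local analysis over a field with enough roots of unity. The extra bookkeeping needed to pass to translated iterates for the reducible members $k=-1$ (family 2) and $k=-4$ (family 1) is routine once eventual stability is in hand, but it is the place where one must be careful, since the martingale and the count must be run on each translated iterate.
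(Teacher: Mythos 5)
Your proposal reconstructs faithfully the mechanism described in Section~\ref{density}: reduce to the fixed-point-free criterion~\eqref{thekey} (uniform in $a_0$), then run the two-step program — (A) the Galois process is a non-negative martingale, hence a.s.\ convergent, once~\eqref{essential} is verified, and (B) rule out a positive eventual value by showing $H_n$ is maximal along an infinite set of $n$. The arithmetic inputs you identify for each family (irreducibility of iterates via Theorem~\ref{stab} and control of the critical orbit; Siegel's theorem for maximality of $H_n$ infinitely often; the translated-iterate device for the reducible members $k=-1$ of family~(2) and $k=-4$ of family~(1); the rational-function analogue of~\eqref{essential} from \cite{galrat} for family~(4); the local/permutation-group analysis of \cite{zdc} for family~(5)) are precisely the ones invoked by the cited theorems, so this is the paper's own route, merely unpacked, as the paper itself gives no proof beyond the citations.

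One small misstatement worth correcting: in your treatment of family~(1) you write that ``only a finite set $R$ of primes can divide elements of the critical orbit.'' That is false — the critical orbit grows and picks up infinitely many prime divisors. The finite set $R$ consists of the primes dividing elements in the \emph{orbit of $0$} (which is pre-periodic), and what is actually used is the consequence stated in Section~\ref{methods}: for $p \notin R$, $p$ can divide at most one element of the critical orbit, so that primitive prime divisors occur outside $R$. Your subsequent sentence (squares times $R$-units, Siegel) is exactly this argument, so the slip is only in the phrasing. You also gloss over the post-critically finite members of families~(1) and~(2) (e.g.\ $k=\pm2$ in the $x^2+kx-k$ normalization), which are not excluded from the theorem's statement; these need a separate (easier) argument since~\eqref{essential} and Siegel's theorem do not apply in the same way, but that is a detail deferred to \cite{quaddiv}, as the paper itself does.
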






\section{Stability and eventual stability} \label{stability}

As noted frequently in Sections \ref{generic} and \ref{density}, establishing the transitivity of the action of $G_n(\phi)$ on the sets $\phi^{-n}(0)$ is crucial to understanding $G_\infty(\phi)$. Even when this transitivity fails, one can often recover significant information about $G_\infty(\phi)$ when its action on $\phi^{-n}(0)$ has a bounded number of orbits as $n$ grows. Thus we are interested in the factorization into irreducibles of the numerator of $\phi^n(x)$. We make these definitions, where $F$ denotes \textit{any field}:

\smallskip

\begin{itemize}
\item $\phi \in F(x)$ is \textit{stable} if the numerator of $\phi^n(x)$ is irreducible for all $n \geq 1$.
\item $\phi \in F(x)$ is \textit{eventually stable} if the number of irreducible factors of the numerator of $\phi^n(x)$ is bounded as $n$ grows. 
\end{itemize}

The extent to which these two properties hold for generic $\phi$ is a question of great interest, and which has prompted much recent research. As in the study of the Galois theory of iterates, it was Odoni who first examined questions of stability: see \cite[Sections 1 and 2]{odonigalit}, \cite[Proposition 4.1]{odoniwn}, and \cite[Lemma 4.2]{odoni}. A fundamental observation is that Eisenstein polynomials are stable, as any iterate of an Eisenstein polynomial is again Eisenstein. This statement holds in great generality, and in \cite[Lemma 2.2]{odonigalit} Odoni uses it to prove that the generic degree-$d$ monic polynomial given in  \eqref{genmon} is stable. When $\phi$ is a quadratic polynomial, recent work gives additional sufficient conditions for stability to hold. The critical point of $\phi$ again proves critical, just as in the questions of the maximality of $H_n$ dealt with in Sections \ref{generic} and \ref{density}. Here are two such results:

\begin{theorem} \cite[Theorem 2.2]{itconst} \label{stab}
Let $F$ be any field of characteristic $\neq 2$, and let $\phi \in F[x]$ be monic and quadratic, with critical point $c$.  Then $\phi(x)$ is stable if none of $-\phi(c), \phi^2(c), \phi^3(c), \phi^4(c) \ldots$ is a square in $F$.  
\end{theorem}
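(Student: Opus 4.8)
The plan is to argue by induction on $n$, using the standard reduction of stability of iterates to non-squareness in the critical orbit. First I would recall the elementary fact, going back to Odoni, that for a monic quadratic $\phi(x) = (x-c)^2 + b$ over a field $F$ of characteristic $\neq 2$, the polynomial $\phi^n(x)$ is irreducible over $F$ provided $\phi^{n-1}(x)$ is irreducible and the single element $(-1)^{?}\phi^n(c)$ — more precisely, $\phi^n(c)$ up to the right sign — is not a square in $F(\beta)$ for a root $\beta$ of $\phi^{n-1}(x)$. The cleanest route to this is the discriminant computation already performed in Section \ref{methods}: adjoining a root of $\phi(x) - \beta$ to $F(\beta)$ is a quadratic (or trivial) extension governed by $\Disc(\phi(x)-\beta) = -4(b-\beta)$, and as $\beta$ ranges over the roots of $\phi^{n-1}(x)$ the product of these discriminants is $(-4)^{2^{n-1}}\phi^{n-1}(b) = (-4)^{2^{n-1}}\phi^n(c)$. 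So if $\phi^{n-1}(x)$ is irreducible, then $\phi^n(x)$ is irreducible over $F$ as long as $\phi^n(c)$ is not a square in $F(\beta)$; and since $\phi^n(c) \in F$, it is a square in the degree-$2^{n-1}$ field $F(\beta)$ only if it is already a square in $F$ — this last descent step is the one small subtlety, handled by a norm argument or by noting that a field extension of odd-power-of-$2$ degree cannot contain $\sqrt{a}$ for $a \in F$ a non-square unless... actually one needs the group-theoretic input that $\Gal$ is a $2$-group, exactly as in the paragraph preceding \eqref{onecrit}; I would cite that discussion rather than reprove it.

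The inductive setup then runs as follows. For the base case $n=1$: $\phi$ itself is a monic quadratic, reducible over $F$ iff it has a root in $F$ iff $-b = -\phi(c) + \text{(wait, } \phi(c) = b)$, i.e. iff $b = c^2 - (\text{root})^2$-type condition — concretely $\phi(x) = (x-c)^2 + b$ has a root in $F$ iff $-b$ is a square, i.e. iff $-\phi(c)$ is a square in $F$ (using $\phi(c) = b$). So the hypothesis that $-\phi(c)$ is not a square gives irreducibility of $\phi$. For the inductive step, assume $\phi^{n-1}(x)$ is irreducible; by the discriminant-product computation above, $\phi^n(x)$ is irreducible provided $\phi^n(c)$ (up to sign) is not a square in $F$. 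Tracking signs: the product of discriminants is $(-4)^{2^{n-1}}\phi^n(c)$, and $(-4)^{2^{n-1}}$ is a square in $F$ (it is $((-4)^{2^{n-2}})^2$ for $n \geq 2$, or one checks $-4 = (2i)^2$ only if $i \in F$ — better to absorb this into the statement that the relevant square class is that of $\phi^n(c)$ itself when $n \geq 2$). Hence what is needed is precisely that $\phi^n(c)$ is not a square in $F$ for each $n \geq 2$, which is exactly the hypothesis $\phi^2(c), \phi^3(c), \ldots$ not squares. Combined with the base case, this gives stability.

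The main obstacle — and the step I would spend the most care on — is the descent from "$\phi^n(c)$ not a square in $F$" to "$\phi^n(c)$ not a square in $F(\beta)$," since a priori an element of $F$ could become a square in an extension. The resolution is that $F(\beta)/F$ has degree $2^{n-1}$, and more importantly $F(\phi^{-(n-1)}(0))/F$ is Galois with $2$-group Galois group (being a subgroup of $\Aut(T_{n-1})$, an iterated wreath product of $\Z/2\Z$); an element $a \in F^*$ becomes a square in a $2$-extension only if $F(\sqrt a)$ sits inside it, and inside a field generated by the roots of the irreducible $\phi^{n-1}$ one checks $\sqrt{\phi^n(c)} = \sqrt{\prod_i \Disc(\phi(x)-\beta_i)}$ up to squares, so $\phi^n(c) \in F^{*2}$ would force the extension $F(\phi^{-n}(0))/F(\phi^{-(n-1)}(0))$ to have degree strictly less than $2^{2^{n-1}}$, and chasing this one sees it forces $\phi^n(c)$ itself to be a square in $F$ — this is the content of the transitivity argument in Section \ref{methods} showing $(1,\ldots,1) \in V$. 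I would present this by directly invoking that argument: irreducibility of $\phi^{n-1}(x)$ means $G_{n-1}$ acts transitively on the $\delta_i$, so if there is any nontrivial multiplicative relation mod squares among the $\delta_i$ over $F(\phi^{-(n-1)}(0))$ then $\prod_i \delta_i = (-4)^{2^{n-1}}\phi^n(c)$ is a square there, and then — since this product lies in $F$ — a norm/degree count forces it to be a square in $F$, contradicting the hypothesis. Everything else is bookkeeping with signs and the trivial base case.
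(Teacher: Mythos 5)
Your overall architecture — induction via Capelli's lemma, the base case detecting reducibility of $\phi$ itself through $-\phi(c)$, and the observation that $(-4)^{2^{n-1}}$ is a square for $n\geq 2$ so that no sign is needed past the first step — matches the intended proof. But the step you yourself flag as ``the one small subtlety'' is where the argument actually breaks, and the repair you propose does not work. Writing $\phi(x)=(x-c)^2+b$, irreducibility of $\phi(x)-\beta$ over $F(\beta)$ is controlled by whether the single element $\beta-b$ is a square in $F(\beta)$. What you need is therefore ``$\phi^n(c)$ nonsquare in $F$ implies $\beta-b$ nonsquare in $F(\beta)$,'' not ``$\phi^n(c)$ nonsquare in $F$ implies $\phi^n(c)$ nonsquare in $F(\beta)$ or in $K_{n-1}$.'' The latter descent is false: an element of $F$ can perfectly well become a square in a $2$-extension, and there is no ``norm/degree count'' that prevents this. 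The paper's own example $f(x)=x^2+3$ refutes your route concretely: the hypotheses of Theorem \ref{stab} hold (every $f^n(0)$ lies in the square class of $\pm 3$), so all iterates are irreducible, yet $f^3(0)=7^2\cdot 3$ \emph{is} a square in $K_2$, so $(1,\dots,1)\in V$ and $H_3$ is not maximal. Your proposed chain — nontrivial relation $\Rightarrow$ $\prod_i\delta_i$ square in $K_{n-1}$ $\Rightarrow$ $\phi^n(c)$ square in $F$ — would, if valid, prove $H_n$ maximal for all $n$ and hence $G_\infty(f)=\Aut(T_\infty)$ for $x^2+3$, which is false. The Kummer-module argument of Section \ref{methods} is built for that stronger conclusion and genuinely requires non-squareness of $\phi^n(c)$ in $K_{n-1}$ as an input; it cannot be run from non-squareness in $F$ alone.

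The correct argument, and the one the paper sketches, applies the norm to the element whose squareness actually matters. If $\beta-b=\gamma^2$ with $\gamma\in F(\beta)$, then $N_{F(\beta)/F}(\beta-b)=N_{F(\beta)/F}(\gamma)^2$ is a square in $F$; on the other hand, since the conjugates of $\beta$ are exactly the roots $\beta_i$ of the (inductively irreducible, monic) polynomial $\phi^{n-1}$, one computes $N_{F(\beta)/F}(\beta-b)=(-1)^{2^{n-1}}\prod_i(b-\beta_i)=(-1)^{2^{n-1}}\phi^{n-1}(b)=(-1)^{2^{n-1}}\phi^{n}(c)$, which equals $\phi^n(c)$ for $n\geq 2$ and $-\phi(c)$ for $n=1$. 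So the hypothesis directly forces $\beta-b$ to be a nonsquare in $F(\beta)$, hence $\phi(x)-\beta$ is irreducible over $F(\beta)$, and Capelli closes the induction. The point is that the norm is taken of $\beta-b\in F(\beta)\setminus F$, whose conjugates are genuinely distinct elements — that is why the computation produces $\phi^n(c)$ on the nose rather than a power of it, and why no descent of square classes between $F$ and $F(\beta)$ is ever required.
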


\begin{theorem} \cite[Theorem 3.1]{itconst} \label{stab2}
Let $\phi(x) = (x - \gamma)^2 + \gamma + m$ with $\gamma, m \in \Z$. If $\gamma \not\equiv m \bmod{2}$, then $\phi$ is stable. 
\end{theorem}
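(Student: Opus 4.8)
The plan is to reduce stability to a non-square condition on the critical orbit of $\phi$ and then settle that condition with a $2$-adic analysis driven by the parity hypothesis. The critical point of $\phi$ is $c=\gamma$, and $\phi(c)=\gamma+m$. Conjugating by the translation $h(x)=x+\gamma$ gives $h^{-1}\circ\phi\circ h=x^{2}+m$, so $\phi^{n}(x)=\gamma+\psi^{n}(x-\gamma)$ with $\psi(x)=x^{2}+m$; in particular $\phi^{n}(\gamma)=\gamma+b_{n}$, where $(b_{n})$ is the critical orbit of $\psi$, i.e. $b_{0}=0$ and $b_{n+1}=b_{n}^{2}+m$. By Theorem \ref{stab} it then suffices to prove that none of $-(\gamma+m),\ \gamma+b_{2},\ \gamma+b_{3},\dots$ is a square in $\Q$ (equivalently in $\Z$).

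First I would study the sequence $(b_{n})$ modulo small powers of $2$. The hypothesis $\gamma\not\equiv m\bmod 2$ splits into two regimes: if $m$ is odd then $\gamma$ is even, the $b_{n}$ become odd for $n$ odd and even for $n$ even, and $\gamma+b_{n}\equiv b_{n}\bmod 2$; if $m$ is even then $\gamma$ is odd and every $b_{n}$ is even. In each regime one tracks $b_{n}\bmod 4$ --- which stabilizes after a short transient --- and, using that squares occupy only the residues $0,1\bmod 4$ (and $0,1,4\bmod 8$), a finite case analysis on $\gamma,m\bmod 4$ (refined to $\bmod 8$ where needed) shows that $\gamma+b_{n}$ lands in a non-square class for every $n\ge 2$. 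The remaining term $-(\gamma+m)=-(\gamma+b_{1})$ is treated separately, combining its residue class with its sign.

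I expect the main obstacle to be exactly the interface between the stabilized residue pattern of $(b_{n})$ and its transient: the low-index terms $-(\gamma+m)$ and $\gamma+b_{2}$ do not yet see the eventual pattern, so pinning them down --- and checking that the lone hypothesis $\gamma\not\equiv m\bmod 2$ is precisely what forces both these terms and the tail into non-square classes --- is where the bookkeeping is delicate. A secondary subtlety is that Theorem \ref{stab} gives only a sufficient criterion, so in the borderline situations where some $\gamma+b_{n}$ is itself a rational square one must fall back on the sharper statement underlying \eqref{onecrit}: assuming $\phi^{n-1}$ irreducible, $\phi^{n}$ is irreducible if and only if $\phi(c)-\beta$ is not a square in $\Q(\beta)$ for $\beta$ a root of $\phi^{n-1}(x)$. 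Here the parity hypothesis helps once more, since it makes $\phi^{n-1}(x)\bmod 2$ a power of a linear polynomial over $\F_{2}$, which constrains how $2$ factors in $\Q(\beta)$; a Newton-polygon computation of $v_{2}(\phi(c)-\beta)$ at the prime above $2$ should show this valuation is odd, whence $\phi(c)-\beta$ is a non-square and $\phi^{n}$ is irreducible.
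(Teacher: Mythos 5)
Your primary route cannot work, because the hypothesis of Theorem \ref{stab} is genuinely stronger than that of Theorem \ref{stab2}: the parity condition $\gamma \not\equiv m \bmod 2$ does \emph{not} keep the critical orbit away from rational squares. Take $\gamma = 3$, $m = 2$, so $\phi(x) = (x-3)^2+5$; then $\phi(c) = 5$ and $\phi^2(c) = \phi(5) = 9 = 3^2$, so Theorem \ref{stab} is simply inapplicable, yet the theorem you are proving asserts that this $\phi$ is stable. Even when no actual square occurs, the proposed congruence bookkeeping cannot certify non-squareness: for $m \equiv 0$ and $\gamma \equiv 1 \pmod 8$ one gets $b_n \equiv 0 \pmod 8$ for every $n$, hence $\gamma + b_n \equiv 1 \pmod 8$, which is exactly the odd $2$-adic square class, so no analysis at the prime $2$ (to any modulus) can rule out squares. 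Your fallback via the Capelli criterion is where the real content would have to live, but as described it also fails for half the levels: since $\gamma+m$ is odd, $\phi(x) \equiv (x+1)^2 \pmod 2$ and hence $\phi^{n-1}(x)$ reduces mod $2$ to $x^{2^{n-1}}$ or $(x+1)^{2^{n-1}}$ according to the parity of $n-1$; in the first case every place $v$ above $2$ in $\Q(\beta)$ has $v(\beta)>0$ while $v(\phi(c))=0$, so $v(\beta - \phi(c)) = 0$ is even and the Newton-polygon argument yields nothing. (There is also a sign slip: the relevant quantity is $\beta - \phi(c)$, not $\phi(c)-\beta$, and $-1$ need not be a square in $\Q(\beta)$.)

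The paper's proof sidesteps all of this by changing the base of the norm computation rather than the target of the congruence argument. As in the proof of Theorem \ref{stab}, one introduces an element $\tau_n \in \Q(\phi^{-n}(0))$ whose non-squareness there forces irreducibility of $\phi^n$; but instead of taking its norm all the way down to $\Q$ --- which produces $\phi^n(c)$ up to squares and therefore \emph{requires} the critical orbit to avoid squares in $\Q$ --- one takes the norm only down to the fixed quadratic field $K_1 = \Q(\phi^{-1}(0)) = \Q\bigl(\sqrt{-(\gamma+m)}\bigr)$. The resulting non-square condition lives in $K_1$, where the parity hypothesis can be exploited through the behaviour of the prime $2$ (note $\gamma+m$ is odd) uniformly in $n$. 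This weaker, $K_1$-level condition is precisely what allows the theorem to cover maps such as $(x-3)^2+5$ whose critical orbits contain perfect squares of $\Q$. If you want to salvage your write-up, the fix is to replace the descent to $\Q$ by a descent to $K_1$ and carry out the $2$-adic computation there.
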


Both of these results apply to many non-Eisenstein polynomials. 
When the field $F$ in Theorem \ref{stab} is a finite field, ``if" may be replaced by ``if and only if," and this stronger result underlies much of the analysis in \cite{settled}. The proof of Theorem \ref{stab} is a nice exercise in field theory, with the key step being to define a certain sequence $(\tau_n)_{n \geq 1}$ with $\tau_n \in F(\phi^{-n}(0))$, and to show that $\tau_n$ is not a square in $F(\phi^{-n}(0))$, for each $n \geq 1$. To do this, one takes the norm from $F(\phi^{-n}(0))$ to $F$ of $\tau_n$, and the result is a square times $\phi^n(c)$. Hence if $\phi^n(c)$ is not a square in $F$ for each $n \geq 1$, the desired result follows (with an additional complication in the case $n = 1$). Theorem \ref{stab2} is proven by taking the norm of $\tau_n$ from $F(\phi^{-n}(0))$ to $F(\phi^{-1}(0))$ instead. The version stated here is a special case of \cite[Theorem 3.1]{itconst}, as the latter holds over most number fields. 

When $\phi$ is a rational function, even of degree 2, there are very few results giving sufficient conditions for $\phi$ to be stable. One such result is for the family in \eqref{family}, where a condition similar to that of Theorem \ref{stab} is given in \cite[Theorem 4.5]{galrat}.

The fact that Eisenstein polynomials are stable, along with Theorems \ref{stab} and \ref{stab2}, suggests that stability should hold for a large class of polynomials over a given global field.  Indeed, when $\phi \in \Z[x]$ is monic and quadratic this is a theorem (see \cite{shparostafe}, where a proof is given using Theorem \ref{stab}). However, the notion of stability has the disadvantage of not being invariant under finite extensions of the ground field. Moreover, even for quadratic polynomials over $\Q$ one finds examples where stability fails for no obvious structural reason. For instance, recall from p. \pageref{cooled} the case $\phi(x) = x^2 - x - 1$, where $\phi(x)$ and $\phi^2(x)$ are irreducible but $\phi^3(x)$ factors as the product of two irreducible quartics. Another interesting example is $\phi(x) = x^2-\frac{16}{9}$, where one has not only the obvious factorization of $\phi$, but an additional splitting of $\phi^3$:
\[\phi^3(x)  =  \left(x^2-2x+\frac{2}{9}\right) \left(x^2+2x+\frac{2}{9}\right) \left(x^2-\frac{22}{9}\right) \left(x^2-\frac{10}{9}\right).\]
It is possible to prove for this example that no additional splitting occurs: for $n \geq 3$, $\phi^n(x)$ has precisely four irreducible factors over ${\mathbb{Q}}$ (see the remark following the proof of Theorem 1.6 of \cite{zdc}).

Eventual stability, on the other hand, may reasonably be expected to hold for all maps for which $0$ is not periodic under $\phi$ (see the discussion at the end of Section \ref{pdc} for the reasons why the latter must be excluded). In the case where $\phi \in \Z[x]$ is monic and quadratic, this is Conjecture 1 at the end of Section 4 of \cite{quaddiv}. A more general conjecture is proposed in \cite{evstab}. However, few results in this direction are known. To the author's knowledge, the most general are these:
\begin{theorem} \cite[Theorem 1.6]{zdc} \label{evstab1}
Let $d \geq 2$, let $K$ be a field of characteristic not dividing $d$, and let $\phi(x) = x^d + c \in K[x]$ with $c \neq 0$.  If there is a discrete non-archimedean absolute value on $K$ with $|c| < 1$, then $\phi$ is eventually stable over $K$. 
\end{theorem}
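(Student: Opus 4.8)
The plan is to derive eventual stability from a single Newton-polygon computation, exploiting that at a place where $c$ is small the map $x \mapsto x^d + c$ behaves like $x \mapsto x^d$, so that every iterated preimage of $0$ is forced onto one slope. Normalize the given discrete non-archimedean absolute value so that it comes from a surjective valuation $v \colon K^\times \to \Z$, and set $e := v(c)$; the hypothesis $|c| < 1$ says precisely that $e > 0$. Extend $v$ to a valuation $\bar{v}$ on $\overline{K}$ (always possible).

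The key claim is that for every $n \ge 1$, every root $\alpha$ of $\phi^n(x)$ satisfies $\bar{v}(\alpha) = e/d^n$. I would prove this by induction on $n$: a root of $\phi^n$ satisfies $\alpha^d = \beta - c$ for some root $\beta$ of $\phi^{n-1}$, where $\beta = 0$ when $n = 1$. For $n = 1$, $\alpha^d = -c$ gives $d\,\bar{v}(\alpha) = e$. For $n \ge 2$, the inductive hypothesis gives $\bar{v}(\beta) = e/d^{n-1} < e = \bar{v}(c)$, so the ultrametric inequality is an \emph{equality}, $\bar{v}(\beta - c) = e/d^{n-1}$, whence $\bar{v}(\alpha) = e/d^n$. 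The strict inequality $e/d^{n-1} < e$ — that is, $d \ge 2$ together with $e > 0$ — is exactly what makes the step go through. In particular every root of $\phi^n$ has finite valuation, so $0$ is not among them, $x \nmid \phi^n(x)$, and every monic irreducible factor $g$ of $\phi^n$ over $K$ has $g(0) \ne 0$.

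Now fix such a factor $g$, which we may take monic, of degree $k$, and let $\beta_1, \dots, \beta_k$ be its roots. These are among the roots of $\phi^n$, so $\bar{v}(\beta_i) = e/d^n$ for each $i$, and since $g(0) = \pm\,\beta_1 \cdots \beta_k$ lies in $K$ we get $k e / d^n = v(g(0)) \in \Z$, hence $d^n \mid k e$ and $k \ge d^n / \gcd(d^n, e)$. As the degrees of the irreducible factors of $\phi^n$ sum to $d^n$, the number of factors is at most $\gcd(d^n, e)$, which divides the fixed integer $f := \prod_{p \mid d} p^{\ord_p(e)}$ for every $n$ (with equality once $n$ is large). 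Thus $\phi^n$ has at most $f$ irreducible factors for all $n$, which is eventual stability; it also yields genuine stability ($f = 1$) whenever $\gcd(d, v(c)) = 1$, e.g.\ in the Eisenstein-type case $v(c) = 1$.

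There is no deep obstacle here: the substance is the observation that $\phi^{-n}(0)$ is ``maximally ramified'' at $v$, all of it lying on the single Newton slope $e/d^n$, after which the factor count is forced. The only points needing care are the extension of $v$ to $\overline{K}$ and the verification that $\bar{v}(\beta - c) = \min(\bar{v}(\beta), \bar{v}(c))$ is an equality at each step — which is where the hypotheses $d \ge 2$ and $v(c) > 0$ enter. As a sanity check this matches the examples in the text: for $\phi = x^2 - \frac{16}{9}$ one takes $v$ to be the $2$-adic valuation, so $e = 4$ and $f = 4$ (the stated four factors for $n \ge 3$), while $\phi = x^2 + 3$ with the $3$-adic valuation has $e = 1$ and $f = 1$, recovering its Eisenstein stability.
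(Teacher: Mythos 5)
Your proof is correct, and it is a clean implementation of the local strategy that the survey itself attributes to the original proof in \cite{zdc} (``the methods of proof \ldots are local in nature''). The Newton-polygon computation is the right engine: the induction $\bar{v}(\alpha) = e/d^n$ is valid because $e/d^{n-1} < e$ forces equality in the ultrametric inequality at every stage, and the degree constraint $k \ge d^n/\gcd(d^n,e)$ on each irreducible factor then gives the bound $\gcd(d^n,e) \mid \prod_{p \mid d} p^{\ord_p(e)}$ on the number of factors. The sanity checks against $x^2 - 16/9$ (bound $f=4$, matching the stated four factors for $n \ge 3$) and $x^2 + 3$ (Eisenstein, $f=1$) confirm the arithmetic.

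Where you may diverge from \cite{zdc} is in packaging. The survey notes that the published proof goes through a structural result on the \emph{form} of factorizations of iterates of $x^d+c$ (their Theorem 2.2), whereas you extract only the single piece of structure needed for the factor count --- namely the constraint $d^n \mid k\,v(c)$ on the degree $k$ of any irreducible factor, read off from the valuation of its constant term. That shortcut is the main thing your version buys: it sidesteps any classification of the factors themselves, and it makes transparent exactly how the hypotheses $d \ge 2$ and $v(c) > 0$ enter. (The characteristic hypothesis in the theorem statement plays no role in your argument; it is the paper's standing separability assumption, there to guarantee $\phi^{-n}(0)$ has $d^n$ distinct elements.) One small stylistic note: you should observe that the bound is uniform in the \emph{choice of extension} $\bar{v}$, since your inductive claim applies to every root of $\phi^n$ in $\overline{K}$ at once, so any single fixed extension of $v$ suffices.
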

\begin{theorem} \cite[Corollary 3]{ingram} \label{evstab2}
Let $K$ be a number field and $\phi(x)$ a monic polynomial of degree $d$ defined over $K$. Suppose that there exists a non-archimedean prime ${\mathfrak{p}}$ of $K$ with ${\mathfrak{p}} \nmid d$ and such that $|\phi^n(0)|_{\mathfrak{p}} \to \infty$ as $n \to \infty$. Then $\phi$ is eventually stable over $K$. 
\end{theorem}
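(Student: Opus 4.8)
The plan is to reduce eventual stability to a statement about the $\mathfrak{p}$-adic sizes of the critical orbit values $\phi^n(0)$, and then exploit the hypothesis $|\phi^n(0)|_\mathfrak{p} \to \infty$ directly. The key observation is a \emph{stability criterion}: for a monic polynomial $\phi$ over a number field $K$, the number of irreducible factors of $\phi^n(x)$ over $K$ is controlled by the number of ``new" critical-orbit obstructions appearing at each level, and each such obstruction must be ``visible" at the place $\mathfrak{p}$ if $|\phi^n(0)|_\mathfrak{p}$ is large. More precisely, I would first record the iterative discriminant/resultant formula (the analogue of \eqref{disc} for degree $d$, due to Aitken--Hajir--Maire \cite{hajir}), which says that a prime can divide $\mathrm{Disc}(\phi^n)$ only if it divides $\mathrm{Disc}(\phi^{n-1})$ or divides $\phi^n(0)$ (up to the fixed bad primes). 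Combined with the Capelli-type principle that a factorization of $\phi^n = \phi \circ \phi^{n-1}$ refining that of $\phi^{n-1}$ forces, for each new irreducible factor $h$ of $\phi^{n-1}$, that $\phi(x) - \beta$ be reducible over $K(\beta)$ (where $\beta$ is a root of $h$), one sees that each drop in irreducibility at level $n$ corresponds to $h(0) = \pm(\text{value in the orbit of }0)$ being, roughly, a $d$th power times a unit in a suitable localized ring.

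Concretely, I would argue as follows. Complete $K$ at $\mathfrak{p}$ to get $K_\mathfrak{p}$; since $\mathfrak{p} \nmid d$ and $\phi$ is monic with $|\phi^n(0)|_\mathfrak{p}\to\infty$, for all large $n$ the polynomial $\phi^n(x)$ has a root that is very close $\mathfrak{p}$-adically to a root of $\phi^{n-1}(x) = \beta$-fiber behavior; more usefully, the Newton polygon of $\phi^n(x)$ over $K_\mathfrak{p}$ acquires, for large $n$, a long segment of slope controlled by $v_\mathfrak{p}(\phi^n(0))$, which forces $\phi^n(x)$ to have an irreducible factor over $K_\mathfrak{p}$ of large degree. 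Because the local degree bounds the splitting type from below, the number of $K$-irreducible factors of $\phi^n(x)$ whose roots generate this ramified part is bounded. The remaining factors — those with roots that are $\mathfrak{p}$-adically bounded — all lie in a fixed finite extension of $K_\mathfrak{p}$ of bounded ramification (here again $\mathfrak{p}\nmid d$ keeps things tame), and a counting argument shows their number is bounded independently of $n$ as well.

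The cleanest route is probably to set $S$ equal to the (finite) set of places consisting of $\mathfrak{p}$, the archimedean places, the places dividing $d$, and the places of bad reduction for $\phi$, and then: (i) show via the discriminant formula that $\phi^n$ is separable mod all places outside $S$ for which $|\phi^i(0)|$ stays bounded, hence the splitting field $K_n$ of $\phi^n$ is unramified outside $S$ away from a controlled set of primes; (ii) observe that the hypothesis $|\phi^n(0)|_\mathfrak{p} \to \infty$ means that past some level $n_0$, no \emph{new} $\mathfrak{p}$-adic obstruction to irreducibility of a fiber $\phi(x)-\beta$ can arise from an orbit value that was already seen — the valuations are strictly increasing — so the ``tree of factorizations" cannot branch after level $n_0$; and (iii) conclude that the number of irreducible factors is eventually constant, hence bounded. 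Step (ii), making precise exactly why an unboundedly growing $\mathfrak{p}$-adic valuation of the critical orbit \emph{prevents} rather than \emph{causes} factor-branching, is the crux: one has to trace through the Capelli/Newton-polygon argument to see that a reducible fiber $\phi(x)-\beta$ over $K_n$ would require $\phi(0)-\beta$, equivalently a translate of $\phi^{n+1}(0)$, to be a $d$th power \emph{locally at every place}, and the growing valuation at $\mathfrak{p}$ obstructs this for all large $n$.

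\textbf{Main obstacle.} The hard part will be handling the factors whose roots are $\mathfrak{p}$-adically bounded (the ``non-new" part of the factorization), and showing their count is bounded uniformly in $n$ rather than merely non-increasing. This requires either a clean Newton-polygon bookkeeping over $K_\mathfrak{p}$ showing the bounded part of $\phi^n$ stabilizes, or — as Ingram does in \cite{ingram} — a more structural argument bounding the number of $\mathfrak{p}$-adic discs containing roots of the iterates, using the fact that $\phi$ has good reduction at all but finitely many places together with the attracting behavior of the orbit of $0$ at $\mathfrak{p}$. Getting the uniformity constant (as opposed to finiteness for each $n$) is where the real work lies; everything else is the standard discriminant/Capelli machinery already in play in Sections \ref{generic} and \ref{stability}.
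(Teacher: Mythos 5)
There is a genuine gap. First, a point of orientation: the survey does not prove this result; it is quoted from \cite[Corollary 3]{ingram}, and the argument there runs through the construction of a non-archimedean B\"ottcher coordinate (Theorem 2 of \cite{ingram}), i.e.\ an analytic isomorphism $\varphi$ on a region $\{|z|_{\mathfrak{p}} > C\}$ satisfying $\varphi\circ\phi = \varphi^d$, which exists because ${\mathfrak{p}}\nmid d$. Since $|\phi^n(0)|_{\mathfrak{p}}\to\infty$, some $\phi^N(0)$ lands permanently in this region, and the problem is transported to the factorization of the binomials $x^{d^n}-\varphi(\phi^N(0))$, where Capelli-type results genuinely apply and give the uniform bound. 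Your proposal gestures at this only in the last paragraph ("as Ingram does\dots"), deferring precisely the step that carries the proof; the concrete mechanisms you do supply in its place do not work.

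Specifically: (a) a long Newton polygon segment does \emph{not} force a large irreducible factor --- it forces factors of degree divisible by the denominator of the slope, and in the escaping regime $v_{\mathfrak{p}}(\phi^{n+1}(0))\approx d\cdot v_{\mathfrak{p}}(\phi^n(0))$, so the relevant slopes become highly divisible and the polygon imposes essentially no bound on the number of factors (for $\phi(x)=x^2+1/p$ the polygon of $\phi^n$ is a single segment of slope $1/2$, compatible a priori with $2^{n-1}$ quadratic factors). (b) The asserted criterion that a reducible fiber $\phi(x)-\beta$ forces ``$\phi(0)-\beta$, equivalently a translate of $\phi^{n+1}(0)$, to be a $d$th power'' is false for general monic $\phi$: Capelli governs binomials $x^d-a$ only, and for general $\phi$ fiber reducibility is controlled by the \emph{critical values} of $\phi$ (compare \eqref{disc} and Theorem \ref{stab}, where the quantity that must avoid being a square is $\phi^n(c)$ for the critical point $c$, not $\phi^n(0)$). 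Since the hypothesis of the theorem concerns the orbit of the base point $0$ rather than the critical orbit, and these coincide only for maps of the form $x^d+c$, your proposed bridge from hypothesis to conclusion is broken for exactly the class of polynomials the theorem is about. (Relatedly, the valuations $v_{\mathfrak{p}}(\phi^n(0))$ are tending to $-\infty$, not increasing; you appear to be partly in the regime of Theorem \ref{evstab1}.) The B\"ottcher uniformization is not an optional refinement here: it is the device that converts $\phi$ into the power map $z\mapsto z^d$ near infinity so that the $d$th-power/Capelli criterion you want to invoke becomes legitimate.
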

See also \cite[Proposition 4.5]{quaddiv}, where eventual stability is proven for some specific families of quadratic polynomials over $\Z$. Theorem \ref{evstab1} gives an especially strong result in the case where $K$ is a global function field (or indeed a function field over any field) of characteristic not dividing $d$: $\phi$ is eventually stable unless $c$ belongs to the field of constants of $K$. See \cite[Corollary 1.8]{zdc}. Interestingly, the maps in Theorem \ref{evstab1} satisfy $|\phi^n(0)| \to 0$ as $n \to \infty$, and so Theorems \ref{evstab1} and \ref{evstab2} apply to quite different maps. The methods of proof of both are local in nature, but the proof of Theorem \ref{evstab1} relies on the fact that factorizations of iterates of $x^d + c$ take a special form \cite[Theorem 2.2]{zdc}, while to prove Theorem \ref{evstab2}, Ingram constructs a non-archimedean version of the B\"ottcher coordinate \cite[Theorem 2]{ingram}. 
 
Questions of stability and eventual stability remain at the heart of this area, and a subject of active research. See for instance \cite{shparostafe, HCsummer, danielson, ostafe, shparlinski, sookdeo} for further reading.  

\section*{Acknowledgements}
I am grateful to Richard Pink, Joe Silverman, Rob Benedetto, Ben Hutz, and Wade Hindes for valuable comments on earlier drafts of this article. 






\noindent

\bibliographystyle{plain}

\end{document}